\newtheorem{thm}{Theorem}
\newtheorem{prop}[thm]{Proposition}
\newtheorem{lem}[thm]{Lemma}
\newtheorem{cor}[thm]{Corollary}
\newtheorem{defi}[thm]{Definition}
\newcommand{\p}{\partial}
\newcommand{\x}{\partial_x}
\newcommand{\po}{\left(}
\newcommand{\pf}{\right)}
\newcommand{\co}{\left[}
\newcommand{\cf}{\right]}
\newcommand{\R}{\mathbb R}
\newcommand{\E}{\mathbb E}
\newcommand{\A}{\mathcal A}
\newcommand{\na}{\nabla}
\newcommand{\ent}{\text{Ent}_p}
\title{On $\mathcal H^1$ and entropic convergence for contractive PDMP.}
\author{Pierre Monmarché}
\affil{Institut de Mathématiques de Toulouse}
\begin{document}

\maketitle
\abstract{Explicit rate of convergence in variance (or more general entropies) is obtained for a class of Piecewise Deterministic Markov Processes such as the TCP process, relying on functional inequalities. A method to establish Poincar\'e (and more generaly Beckner) inequalities with respect to a diffusion-type energy for the invariant law of such hybrid processes is developped.}

\section{Introduction}

This work is devoted to the study of convergence to equilibrium for a class of Piecewise Deterministic Markov Process (PDMP). These hybrid processes, satisfying a deterministic differential equation between random jumps, have received much attention recently: we refer to  \cite{Krell} and the references therein for an overview of the topic. Ergodicity and, then, speed of convergence to the steady state are particularly studied. As far as this last point is concerned, coupling methods have recently proved efficient in order to get explicit rate of convergence in Wasserstein distances for PDMP (see \cite{ChafaiMalrieuParoux,Malrieu2011,BLBMZ2,Fontbona2010,Bouguet} for instance, among many others).  On the other hand, another classical approach to quantify ergodicity, based on functional inequalities, is hardly used, since the usual methods do not directly apply. Our aim is to adapt them (see also \cite{LaurencotPerthame} in this direction).

Let $\Omega$ be an open set of $\R^d$. The dynamics is defined thanks to a vector field $b:\Omega\rightarrow \R^d$, a jump rate $\lambda : \Omega \rightarrow \R_+$, and a transition kernel $Q$ which will be seen either as a function from $\Omega$ to $\mathcal P\po \Omega\pf$ the set of probability measures on $\Omega$, or as an operator on some functional space. 
For $x\in \Omega$ let $\po \varphi_x(t)\pf_{t\geq0}$ be the flow associated to $b$, namely the solution of
\[\p_t \varphi_x(t) = b\po \varphi_x(t)\pf,\hspace{20pt} \varphi_x(0) = x.\]
Starting at point $x$, the process $(X_t)_{t\geq 0}$ deterministically follows this flow up to its first jump time $T_x$ with law
\[\mathbb P\po T_x< s\pf =  \int_0^s \lambda\po \varphi_x(u)\pf e^{-\int_0^u \lambda\po \varphi_x(w)\pf dw} du = 1 - e^{-\int_0^s \lambda\po \varphi_x(w)\pf dw}.\]
At time $T_x$, the process jumps according to the law $Q\po \varphi_x(T_x)\pf$, and starts anew from its new position. The infinitesimal generator of the process is
\begin{eqnarray}\label{DefinitionGenerateur}
Lf(x)  & = & b(x).\na f(x)  + \lambda(x)\po Q f(x) - f(x)\pf,
\end{eqnarray}
defined at least for bounded $f\in\mathcal{C}^1\po \Omega\pf$. We note 
\[P_t f (x) = \mathbb E\po f(X_t) | X_0 = x\pf\]
the associated semi-group. The following assumptions hold throughout this work: \begin{itemize}
\item the flow is well-defined and it fixes $\Omega$: if $x\in \Omega$ then $\varphi_x(t) \in \Omega$ for all $t>0$.
\item the process is non-explosive: there can't be infinitely many jumps in a finite time interval, so that the process (and therefore the semi-group) is defined for all time.  We suppose $\lambda >0$ almost everywhere, and on every fixed point of the flow.
\item the functions $\lambda$ and $b$ are smooth; we write $J_b(x) = \co \p_i b_k (x)\cf_{1\leq i,k \leq d}$ the Jacobian matrix of $b = \po b_k\pf_{1\leq k \leq d}$.
\item The process admits a unique invariant law $\mu$, and $P_t$ is ergodic in the sense $P_t f(x) \underset{t\rightarrow \infty}\longrightarrow \int f \text{d} \mu$ 
for all $f\in L^2(\mu)$ and all $x\in \Omega$. Moreover all polynomial moments of $\mu$ are finite and, denoting by 
$\A$ the set of function in $\mathcal C^\infty(\Omega)$ whose derivatives grow at most polynomially at infinity, $Q$, $L$ and $(P_t)_{t\geq0}$ are well-defined on $\A$ and they fixes $\A$.
\end{itemize}
These strong assumptions allow us to focus only on the quantification of ergodicity.  Note that the uniqueness of the invariant measure, the finiteness of its moments and the ergodicity of the process may often be proved by checking it is irreducible and admits a Lyapunov function (cf. \cite{MeynTweedieDown}). Throughout this work the test functions will always belong to the set $\A$, in order to keep the study at a formal level, all the forthcoming elementary definitions and calculations being licit in this framework.

\bigskip

We recall here some classical arguments (see \cite[Chapter 5]{Logsob} for a general introduction to functional inequalities and for the detailed proofs of the assertions in this paragraph). For $f\in\A$, we write $\Gamma \po f\pf = \frac12 L(f^2) -  f L f$ the \emph{carré du champ} operator of $L$, $\Gamma(f,g)$ the corresponding symetric bilinear operator obtained by polarization, and 
\[\Gamma_2(f) = \frac12 L\po \Gamma f\pf - \Gamma\po f,Lf\pf.\]
 Writing $\psi(s) = P_s\Gamma \po P_{t-s} f\pf$, from $\partial_t P_t f = LP_t f = P_t Lf$ one gets
\[\psi'(s) = 2 P_s\Gamma_2 \po P_{t-s} f\pf.\]
Hence, if the Bakry-Emery (or $\Gamma_2$) criterion $\Gamma_2 > \rho \Gamma$ holds for some $\rho>0$, the Gronwall Lemma yields $\psi(0) \leq e^{-2\rho t} \psi(t)$, namely
\begin{eqnarray}\label{EquationCommuteGamma}
\Gamma \po P_t f \pf & \leq & e^{-2\rho t}P_t \Gamma f.
\end{eqnarray}
For instance for the Ornstein-Uhlenbeck process with generator
\[L_{OU} f(x) = \Delta f(x) - \rho x \cdot \nabla f(x),\]
this reads
\begin{eqnarray}\label{EquationCommuteGradient}
|\nabla P_t f |^2 & \leq & e^{-2\rho t}P_t |\nabla f|^2,
\end{eqnarray}
where $|.|$ is the euclidian norm of $\R^d$. In fact, the sub-commutation \eqref{EquationCommuteGamma} is equivalent to the Bakry-Emery criterion. Nevertheless the latter does not usually hold in our settings. That said, a simple adaptation of the $\Gamma_2$ argument will give, at least in the constant jump rate case, a gradient estimate similar to \eqref{EquationCommuteGradient}. In the following we denote by $A^*$ the usual transpose of a matrix $A$ and thus by $u^*v$ the scalar product of two vectors.

\begin{thm}\label{ThmConvergencePonctuellePasPoids}
 Assume $\lambda$ is constant and $|\na Q f(x)|^2 \leq M(x) Q|\na f|^2(x)$ with $M$ such that
 \begin{eqnarray}\label{EqBalanceTauxConstant}
 \forall (x,u)\in \Omega\times \R^d,\hspace{27pt}2u^*J_b(x)u + \lambda\po M(x)-1\pf| u|^2  \leq - \eta | u|^2 
 \end{eqnarray}
 for some $\eta \in\R$. Then for all $t>0$, $f\in\A$ and $x\in\Omega$,
 \begin{eqnarray}\label{EqEstimGrad}
 |\na P_t f|^2(x) \leq e^{-\eta t} P_t |\na f|^2(x).
 \end{eqnarray}  
\end{thm}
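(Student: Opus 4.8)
The plan is to rerun the $\Gamma_2$-type argument sketched above, but with the carré du champ $\Gamma$ replaced by the squared Euclidean gradient $|\na\cdot|^2$, which is the quantity actually appearing in \eqref{EqEstimGrad}. Fix $t>0$, $f\in\A$ and $x\in\Omega$; for $s\in[0,t]$ set $g_s=P_{t-s}f$ (which lies in $\A$) and
\[\psi(s)\ =\ P_s\po |\na g_s|^2\pf(x).\]
Using $\p_s g_s=-Lg_s$, $\p_s P_s h=LP_s h=P_s Lh$ for $h\in\A$, and $\p_s|\na g_s|^2=-2\na g_s\cdot\na Lg_s$, one gets
\[\psi'(s)\ =\ P_s\po L|\na g_s|^2-2\na g_s\cdot\na Lg_s\pf(x).\]
Hence, since $P_s$ is positivity preserving, the whole statement follows from the pointwise inequality
\begin{equation}\label{EqGoalPointwise}
L|\na g|^2(y)-2\na g(y)\cdot\na Lg(y)\ \geq\ \eta\,|\na g(y)|^2,\qquad y\in\Omega,\ g\in\A,
\end{equation}
which gives $\psi'(s)\geq\eta\psi(s)$, so that $s\mapsto e^{-\eta s}\psi(s)$ is nondecreasing; evaluating at $s=0$ and $s=t$ and using $g_0=P_tf$, $g_t=f$ then yields $|\na P_tf|^2(x)=\psi(0)\leq e^{-\eta t}\psi(t)=e^{-\eta t}P_t|\na f|^2(x)$.

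To prove \eqref{EqGoalPointwise} I would first expand the left-hand side. With $Lf=b\cdot\na f+\lambda\po Qf-f\pf$ and $\lambda$ constant, using $\na\po b\cdot\na g\pf=J_b\na g+\po b\cdot\na\pf\na g$ and $2\na g\cdot\po b\cdot\na\pf\na g=b\cdot\na|\na g|^2$, the first-order transport contributions to $L|\na g|^2$ and to $2\na g\cdot\na Lg$ cancel, leaving
\[L|\na g|^2-2\na g\cdot\na Lg\ =\ \lambda\, Q|\na g|^2+\lambda\,|\na g|^2-2\po\na g\pf^* J_b\,\na g-2\lambda\,\na g\cdot\na Qg.\]
Writing $u=\na g(y)$, \eqref{EqGoalPointwise} thus amounts to
\[2u^*J_b(y)u+\lambda\po M(y)-1\pf|u|^2+\eta|u|^2\ \leq\ \lambda\po Q|\na g|^2(y)-2\,u\cdot\na Qg(y)+M(y)|u|^2\pf.\]
By the balance assumption \eqref{EqBalanceTauxConstant} the left-hand side is $\leq 0$, so it is enough to check that the bracket on the right is nonnegative, namely $Q|\na g|^2(y)-2\,u\cdot\na Qg(y)+M(y)|u|^2\geq 0$.

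This last step is the only substantive point, and it is where the two hypotheses must be combined just right. By Cauchy--Schwarz and the contraction assumption $|\na Qg(y)|^2\leq M(y)\,Q|\na g|^2(y)$,
\[2\,u\cdot\na Qg(y)\ \leq\ 2|u|\,|\na Qg(y)|\ \leq\ 2\po\sqrt{M(y)}\,|u|\pf\po\sqrt{Q|\na g|^2(y)}\pf\ \leq\ M(y)|u|^2+Q|\na g|^2(y),\]
the last inequality being $2ab\leq a^2+b^2$. Subtracting this bound from $Q|\na g|^2(y)+M(y)|u|^2$ leaves exactly $0$, which proves the claim and hence \eqref{EqGoalPointwise}. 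I expect this balancing to be the main obstacle: the more naive estimate $2\lambda\,u\cdot\na Qg\leq\lambda|u|^2+\lambda|\na Qg|^2$ spends the whole $|u|^2$ budget and does not close, whereas splitting the product as above makes the $Q|\na g|^2$ terms cancel and matches the constant $\eta$ of \eqref{EqBalanceTauxConstant} precisely. The algebraic identity for $L|\na g|^2-2\na g\cdot\na Lg$ and the justification that all objects stay in $\A$ (so that the differentiations are licit) are routine under the standing assumptions.
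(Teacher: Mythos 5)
Your proof is correct and follows essentially the same route as the paper: the paper deduces this theorem from Theorem~\ref{ThmConvergencePonctuelle} with weight $a=1$, where $\Gamma_{L,\phi_a}(g)=\frac12\po L\phi_a(g)-2\phi_a(g,Lg)\pf$ is exactly your $\frac12\po L|\na g|^2-2\na g\cdot\na Lg\pf$, and Lemma~\ref{LemCalculGamma} performs the same decomposition into drift and jump parts and closes the jump term with the identical Cauchy--Schwarz/Young balancing $2\sqrt{\phi(f)\phi(Qf)}\leq M\phi(f)+Q\phi(f)$.
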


Inequality \eqref{EqBalanceTauxConstant} is a balance condition on the drift and the jumps, reminiscent of the condition on the curvature in \cite[Theorem 1.2]{Cloez2012}. More precisely, suppose $|\na Q f(x)|^2 \leq M(x) Q|\na f|^2(x)$ for some function $M$ on $\Omega$. If $M<1$, $Q$ is a contraction of the Wasserstein distance (this will be detailed in Section \ref{SectionCVH1}); it means two particles that simultaneously jump can be coupled so that they get closer. More generaly $M$ measures how two such particles can be coupled in order for them not to get too far away one from the other. On the other hand, $J_b$ measures how two trajectories of the deterministic flow tends to get closer or to drift appart. Indeed,
\begin{eqnarray*}
\varphi_x(t) - \varphi_y(t) &  =&  x- y + t J_b(x)(x-y) + t\underset{y\rightarrow x}o(x-y) + \underset{t\rightarrow 0}o(t) \\
\Rightarrow\hspace{15pt}|\varphi_x(t) - \varphi_y(t)|^2 & = & |x-y|^2 + 2 t(x-y)^*J_b(x)(x-y)+ t\underset{y\rightarrow x}o\po|x-y|^2\pf + |x-y| \underset{t\rightarrow 0}o(t),
\end{eqnarray*}
 We see that the condition $u^* J_b(x) u < 0$ for all $(x,u)\in \Omega\times \R^d$ implies the flow contracts the space in the neighborhood of all points of $\Omega $.

\bigskip

Note that by integrating Inequality \eqref{EqEstimGrad} with respect to $\mu$ and writing
\[W_t = \int |\na P_t f|^2 d \mu, \]
Theorem \ref{ThmConvergencePonctuellePasPoids} implies $W_t \leq e^{-\eta t} W_0$ for all $t>0,\ f\in\A$, which is equivalent to $\p_t W_t \leq -\eta W_t$ for all $t>0,\ f\in\A$, or to $\po \partial_t W_t  \pf_{t=0} \leq -\eta W_0$ for all $f\in\A$. 

In the non-constant jump rate case, under a condition similar to \eqref{EqBalanceTauxConstant}, we will prove there exist constants $\beta>0$ and $\eta\in\R$ such that
\begin{eqnarray}\label{EqWtEvolution}
\p_t W_t \leq - \eta W_t + 2\beta \mathcal E_t
\end{eqnarray}
where $\mathcal E_t$ is defined as
\begin{eqnarray*}
\mathcal E_t & = & \int \Gamma \po P_t f\pf d\mu.
\end{eqnarray*}
Both $W_t$ and $\mathcal E_t$ are usually called energy ; we may say $W_t$ is the classical (or diffusion-like) energy, while $\mathcal E_t$ is the markovian one. They coincide in the case of the Ornstein-Uhlenbeck process.
The markovian energy usually appears in particular when one is concerned with the variance of $P_t f$ with respect to $\mu$,
\begin{eqnarray*}
V_t & = & \int (P_t f)^2 d\mu - \po\int P_t fd\mu\pf^2.
\end{eqnarray*}
We say $\mu$ satisfies a Poincar\'e (or spectral gap) inequality with respect to $\Gamma$ if there exist a constant $c>0$ such that $V_0 \leq c \mathcal E_0$ for all $f\in\A$. Since $\p_t V_t = -2 \mathcal E_t$, such an inequality is equivalent to $V_t \leq e^{-\frac{2t}{c}}V_0$, namely to an exponential decay in $L^2(\mu)$. The same goes for entropy and Gross log Sobolev inequality, or general $\Phi$-entropies (see \cite{Chafai2004} and Section \ref{SectionVariance} for some definitions), at least for diffusion processes.

For reversible processes (\emph{i.e.} when $L$ is symmetric in $L^2(\mu)$) there is a strong link between, on the one hand, Wasserstein distances and coupling and, on the other hand, variance (or entropy) and functional inequalities (see \cite{Cattiaux2008,CattiauxGuillinPAZ,Kulik}); nevertheless PDMP are not reversible. Furthermore their invariant measures usually do not satisfy a Poincaré inequality for $\Gamma$, which is non-local, not easy to handle, satisfying no chain rule (nevertheless, see \cite{Mischler2010} for a case in which such an inequality does indeed hold).

However, they may satisfy a diffusion-like Poincar\'e inequality of the form 
\begin{eqnarray}\label{EqPoincareDiffusion}
\forall f\in\A\hspace{20pt}\int f^2 d\mu - \po \int f d\mu \pf^2 & \leq & c\int |\nabla f|^2 d\mu, 
\end{eqnarray}
in other words $V_t \leq c W_t$. Such an inequality, which involves the classical energy rather than the markovian one, implies concentration properties for the measure $\mu$ (see \cite{Logsob}), but is \emph{a priori} not directly linked to the convergence to equilibirum in general.

 Suppose such an inequality holds. Then, from inequality \eqref{EqWtEvolution}, if $\eta >0$,
\begin{eqnarray*}
\p_t \po W_t + \beta V_t \pf & \leq & - \eta W_t\\
& \leq & -\frac{\eta}{1+\beta c}\po W_t + \beta V_t \pf.
\end{eqnarray*}
This yields:

\begin{thm}\label{ThmConcergenceH1Paspoids}
 Assume the Poincar\'e inequality \eqref{EqPoincareDiffusion} holds, and $|\na Q f(x)|^2 \leq M(x) Q|\na f|^2(x)$ with $M$ such that for $\mu$-almost all $x\in \Omega$ and for all $u\in\R^d$,
 \begin{eqnarray}\label{EquaBalancePasConstant}
 u^* \po 2J_b(x) + \frac{\na \lambda(x) (\na \lambda(x))^*}{\beta \lambda(x)}\pf u +\lambda(x)\po M(x)-1\pf| u |^2  & \leq &  - \eta| u |^2 
 \end{eqnarray}
 for some constants $\eta,\beta>0$. Then
 \[W_t + \beta V_t \leq \po W_0 + \beta V_0\pf e^{-\frac{\eta t}{\beta c+1}}.\]
\end{thm}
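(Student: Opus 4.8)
The plan is to establish the differential inequality \eqref{EqWtEvolution}, i.e.\ $\p_t W_t \leq -\eta W_t + 2\beta\mathcal E_t$, and then feed it into the elementary Gronwall argument already sketched in the text just before the statement. Once \eqref{EqWtEvolution} holds, combining it with the diffusion Poincar\'e inequality $V_t \leq cW_t$ and the identity $\p_t V_t = -2\mathcal E_t$ gives $\p_t(W_t+\beta V_t) \leq -\eta W_t \leq -\frac{\eta}{1+\beta c}(W_t+\beta V_t)$, and Gronwall closes the proof. So the whole content is in proving \eqref{EqWtEvolution}, which is where the balance condition \eqref{EquaBalancePasConstant} enters.

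To get \eqref{EqWtEvolution} I would mimic the $\Gamma_2$-type computation behind Theorem~\ref{ThmConvergencePonctuellePasPoids}, but carried out globally (integrated against $\mu$) rather than pointwise, since the non-constant jump rate prevents a clean pointwise sub-commutation. Write $g = P_t f$ and differentiate: $\p_t W_t = \p_t \int |\na g|^2 d\mu = 2\int \na g \cdot \na(Lg)\, d\mu$. The key algebraic step is to commute $\na$ with $L$: using $L h = b\cdot\na h + \lambda(Qh - h)$ one computes, for each coordinate, $\p_k(Lh) = L(\p_k h) + $ commutator terms, and the commutator splits into a drift part involving $J_b$ and a jump part involving both $\na\lambda$ (because $\lambda$ is now non-constant) and the difference $\na Q h - Q\na h$. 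Plugging $h=g$, integrating against $\mu$, and using that $\int Lh\, d\mu = 0$ together with the hypothesis $|\na Qg|^2 \leq M Q|\na g|^2$, I would bound the jump contribution. The term $\na\lambda\,(Qg-g)$ is the delicate one: I would dominate it by a Young/Cauchy--Schwarz inequality of the form $2\na g\cdot \na\lambda (Qg-g) \leq \beta\lambda (Qg-g)^2 + \frac{1}{\beta\lambda}|\na\lambda|^2\,|\na g|^2$ --- this is precisely why the awkward extra term $\frac{\na\lambda(\na\lambda)^*}{\beta\lambda}$ appears in \eqref{EquaBalancePasConstant}, and the leftover $\beta\lambda(Qg-g)^2$ integrates (after using invariance of $\mu$) into $2\beta\mathcal E_t = 2\beta\int\Gamma(g)d\mu$, recalling that for this generator $\Gamma(h)$ contains the term $\frac{\lambda}{2}Q\big((h-h(\cdot))^2\big)$-type jump contribution whose $\mu$-integral matches. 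The remaining quadratic form in $\na g$, after collecting the $J_b$ term, the $\lambda(M-1)$ term and the $\frac{|\na\lambda|^2}{\beta\lambda}$ term, is exactly the left-hand side of \eqref{EquaBalancePasConstant} evaluated at $u = \na g(x)$, hence bounded by $-\eta|\na g|^2$, which integrates to $-\eta W_t$.

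The main obstacle I anticipate is the precise bookkeeping in the jump part of the commutator $[\na, L]$ and in identifying the resulting jump terms with $\mathcal E_t$: one must be careful that $\int \lambda Q(\varphi)d\mu$ can be re-expressed using the invariance of $\mu$ (the adjoint of $Q$ and the flow balance), so that the cross terms and the $\lambda(Qg-g)^2$ term assemble correctly into $\int\Gamma(g)d\mu$ rather than into some unsigned remainder. A secondary point is justifying all the differentiations under the integral sign and the boundary-term-free integrations by parts --- but this is exactly what the standing assumption that $\A$ is stable under $L$, $Q$ and $P_t$, together with finiteness of all polynomial moments of $\mu$, is there to license, so I would simply invoke it. Everything else (the two Gronwall steps, the algebra turning \eqref{EqWtEvolution} plus Poincar\'e into the stated exponential decay) is routine and already spelled out in the paragraph preceding the theorem.
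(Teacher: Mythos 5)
Your proposal matches the paper's proof (given in the slightly more general weighted form as Theorem~\ref{ThmConcergenceH1}): the commutation of $\na$ with $L$ is exactly what the paper packages via the operator $\Gamma_{L,\phi}$ and Lemma~\ref{LemCalculGamma}, Young's inequality is applied to the cross term $-a(\na g)^*(\na\lambda)(Qg-g)$ precisely as you propose, and $\mu$-invariance gives $\p_t W_t = -2\mu\,\Gamma_{L,\phi}(P_tf)$ with no boundary issues. The only cosmetic difference is that the paper applies the Cauchy--Schwarz inequality for $Q$ on the $V_t$ side, bounding $\p_t V_t \le -\mu\lambda(Qg-g)^2$, rather than absorbing the leftover $\beta\mu\lambda(Qg-g)^2$ directly into $2\beta\mathcal{E}_t$ as you do; the two are equivalent, and indeed the introduction sketches your exact route through \eqref{EqWtEvolution}.
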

Note that
\[W_t + \beta V_t = \| \na P_t f\|^2_{L^2(\mu)} + \beta \| P_t f - \mu f \|^2_{L^2(\mu)} \]
is equivalent to the square of the usual Sobolev $\mathcal H^1$-norm of $P_t f - \mu f$. Thus Theorem \ref{ThmConcergenceH1Paspoids} provides a decay in $\mathcal H^1(\mu)$ rather than in $L^2(\mu)$. In this sense, our method can be seen as an hypocoercive method of modified Lyapunov functional (see \cite{Villani2009,DMS2011,Baudoin}, etc.), although it is quite simple. In these settings, it is usual to assume a Poincar\'e inequality \eqref{EqPoincareDiffusion} holds. There are classical criteria on a function $F$ on $\R^d$ to decide wether the law $e^{-F(x)}dx$ satisfies such an inequality, and several ways to estimate the constant $c$. However, for PDMP, the invariant law is usually quite unknown. The second part of this work will thus be dedicated to the obtention of such inequalities, which are interesting by themselves as they provide concentration bounds for the measure $\mu$.

\bigskip

The original motivation of the present work was the study of the so-called TCP process on $\Omega = \R_+$, whose generator is
\begin{eqnarray}\label{GeneTCPlin}
\forall x>0, \ f\in\A,\hspace{20pt}L f(x) &=& f'(x) + x \po f(\delta x) - f(x)\pf,
\end{eqnarray}
for some $\delta \in(0,1)$. It has been studied in \cite{ChafaiMalrieuParoux}, which inspired the main ideas of this work. In addition to the previous difficulties (no Poincar\'e inequality for $\Gamma$, non-constant rate of jump), there is another one which is particular to this process : the jump vanishes at the origin. Nevertheless, as an illustration of the efficiency of our method, we will prove the following:
\begin{prop}\label{PropTCPLinea}
For $f\in\A$, define
\[ \text{Ent} f = \mu \po f^2 \log f^2\pf - \po \mu f^2\pf \log \po \mu f^2\pf.\]
Then if $(P_t)_{t>0}$ is the semi-group associated to the generator \eqref{GeneTCPlin}, there exists $c,r>0$ such that for all $f\in\A$,
\begin{eqnarray*}
\text{Ent} P_t f & \leq & c e^{-r t} \mu (f')^2.
\end{eqnarray*}
Moreover it is possible to get explicit values for $c$ and $r$ such that this holds.
\end{prop}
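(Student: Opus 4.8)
\emph{Proof strategy.} The plan is to establish separately (i) an exponential decay of the classical energy $W_t=\int\po (P_tf)'\pf^2\,d\mu$ for the TCP generator \eqref{GeneTCPlin}, and (ii) a diffusion-type log-Sobolev inequality $\text{Ent}\,g\le c_{LS}\,\mu\po (g')^2\pf$ for the invariant law $\mu$, and then to combine them: once (ii) holds, $\text{Ent}\,P_tf\le c_{LS}\,W_t$ (apply the inequality to $g=P_tf$), so the decay of $W_t$ from (i) immediately upgrades to the announced entropic decay. For this process $d=1$, the drift is $b\equiv 1$ (hence $J_b\equiv 0$), the jump rate is $\lambda(x)=x$ (hence $\na\lambda\equiv1$), and $Qf(x)=f(\delta x)$, so $|\na Qf(x)|^2=\delta^2\,Q|\na f|^2(x)$ and one may take $M\equiv\delta^2<1$.

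For step (i) one would like to invoke Theorem \ref{ThmConcergenceH1Paspoids}, but with the above data the left-hand side of the balance condition \eqref{EquaBalancePasConstant} is $\po \tfrac1{\beta x}+x(\delta^2-1)\pf u^2$: it is $\le-\eta u^2$ for $x$ bounded away from $0$, yet it blows up as $x\to0^+$, and in fact the integral $\int\tfrac1{\beta x}\po (P_tf)'\pf^2 d\mu$ is not even finite in general since $\mu$ has a positive density near the origin (from $-\rho'(x)+\tfrac{x}{\delta^2}\rho(x/\delta)-x\rho(x)=0$ one reads off $\rho'(0)=0$, and $\rho(0)>0$). Hence the region near $0$, where the jump rate degenerates, must be handled by hand. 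The tools for this are: the drift $b\equiv1$ points into $\R_+$ at the boundary, so differentiating $W_t$ and integrating the term $\int(P_tf)'(P_tf)''\,d\mu$ by parts produces a \emph{nonpositive} boundary contribution $-\po (P_tf)'(0)\pf^2\rho(0)$; the nonlocal jump increment satisfies $P_tf(\delta x)-P_tf(x)=O(x)$ near $0$, so the troublesome cross term $\int(P_tf)'\,\po P_tf(\delta x)-P_tf(x)\pf\,d\mu$ is harmless on a small neighbourhood $(0,x_0)$ without resorting to Young's inequality there (this is what created the $\tfrac1{\beta x}$); and the flow $x\mapsto x+t$ leaves $(0,x_0)$ within time $x_0$, which quantitatively controls the defect coming from that neighbourhood. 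Combining these with the genuine balance condition on $[x_0,\infty)$ and with the variance absorption of Theorem \ref{ThmConcergenceH1Paspoids} (using a diffusion Poincar\'e inequality \eqref{EqPoincareDiffusion} for $\mu$), one should obtain $W_t+\beta V_t\le e^{-rt}\po W_0+\beta V_0\pf$ for suitable $\beta,r>0$. This near-origin analysis, and the choice of $x_0$ in particular, is the main obstacle.

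Step (ii), the functional inequalities for $\mu$, is exactly what the second part of the paper is built to provide; alternatively one argues directly in dimension one. Testing $\int Lf\,d\mu=0$ against $f(x)=x^{k+1}$ gives the moment recursion $\mu(x^{k+2})=\tfrac{k+1}{1-\delta^{k+1}}\mu(x^k)$, whence $\mu(x^{2n})=(2n-1)!!\prod_{j=0}^{n-1}(1-\delta^{2j+1})^{-1}$; thus $\mu$ has Gaussian-type tails, and together with the regularity of its density this yields, through a one-dimensional Hardy-type criterion (Muckenhoupt for the Poincar\'e inequality, Bobkov--G\"otze for the log-Sobolev one), both $V_0\le C_P\,\mu\po(f')^2\pf$ and $\text{Ent}\,g\le c_{LS}\,\mu\po(g')^2\pf$ with explicit constants.

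Granting (i) and (ii) one concludes immediately: since $W_0=\mu\po(f')^2\pf$ and $V_0\le C_P\,\mu\po(f')^2\pf$,
\[\text{Ent}\,P_tf\ \le\ c_{LS}\,W_t\ \le\ c_{LS}\po W_t+\beta V_t\pf\ \le\ c_{LS}\,e^{-rt}\po W_0+\beta V_0\pf\ \le\ c_{LS}(1+\beta C_P)\,e^{-rt}\,\mu\po(f')^2\pf,\]
which is the stated estimate with $c=c_{LS}(1+\beta C_P)$ and $r$ as above, all constants being explicit since $\beta,r$ come from the balance analysis of step (i) and $c_{LS},C_P$ from step (ii). The genuinely new ingredient, and the hard part, is the near-origin treatment of $W_t$; everything else is the $\mathcal H^1$ machinery of Theorem \ref{ThmConcergenceH1Paspoids} followed by a single application of the log-Sobolev inequality.
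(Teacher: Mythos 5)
Your overall architecture is the right one and matches the paper's: prove that the classical energy decays exponentially, prove a diffusion-type log-Sobolev inequality for $\mu$, and chain the two. You also correctly identify the crux — the jump rate $\lambda(x)=x$ vanishes at the origin, so the balance condition \eqref{EquaBalancePasConstant} applied with the unweighted gradient produces the term $\tfrac{1}{\beta x}$ which cannot be integrated against $\mu$. What is missing is a working device to deal with this; the one you sketch does not close.

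Concretely, you propose to avoid Young's inequality on $(0,x_0)$ and instead bound the cross term $\int_0^{x_0}(P_tf)'\bigl(P_tf(\delta x)-P_tf(x)\bigr)d\mu$ using $P_tf(\delta x)-P_tf(x)=O(x)$. But the implicit constant there is $(1-\delta)\sup_{[0,x_0]}|(P_tf)'|$, and there is no elementary way to compare $\sup|(P_tf)'|^2$ with $W_t=\int((P_tf)')^2d\mu$ without invoking second-derivative information; the "boundary term from integration by parts" and "finite exit time from $(0,x_0)$" heuristics do not supply this. You acknowledge the near-origin analysis is "the main obstacle," and indeed the proposal as written leaves it unresolved.

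The paper's actual resolution is the one ingredient you are missing: replace $|f'|^2$ by the \emph{weighted} energy $\phi_a(f)=a|f'|^2$ with $a(x)=1-e^{-x}$, i.e.\ a weight behaving like $x$ near the origin and like $1$ at infinity. This has two simultaneous effects in the weighted balance condition \eqref{EquationThm1}. First, the Young term becomes $\tfrac{a(x)}{\beta x}\le\tfrac1\beta$, bounded. Second, and crucially, the extra drift term $-\tfrac{b^*\nabla a}{a}=-\tfrac{a'(x)}{a(x)}\sim-\tfrac1x$ near $0$ supplies a contraction precisely where the jump term fails to: one ends up needing $(1-\delta)\bigl(\tfrac{1}{e^x-1}+x\bigr)\ge\theta$ for all $x>0$, which holds with $\theta\simeq1.58$ (Lemma \ref{LemmeTCPlinePoinca}). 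Equivalently, in metric terms, one works with the distance $d(x,y)=|\psi(x)-\psi(y)|$, $\psi(x)=\int_0^x a(y)^{-1/2}dy$, which stretches near the origin so that the flow becomes contracting there even though the jump rate vanishes. This is not an optional refinement: without it, Theorem \ref{ThmConcergenceH1} simply does not apply to this model, and no ad hoc local argument you sketch supplies a substitute. The functional-inequality step is then necessarily \emph{weighted} as well — the paper establishes $\text{Ent}\,g\le c_2\,\mu(a(g')^2)$ by studying the twisted process $\psi(X)$, its embedded chain, confining operators and a perturbation lemma — and your alternative of applying Muckenhoupt/Bobkov--G\"otze directly to $\mu$ would require pointwise two-sided bounds on the (not explicitly known) density of $\mu$, which the moment recursion alone does not give. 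So while your outline is sound, the genuinely new idea of the paper — the weight $a(x)=1-e^{-x}$ and the induced metric change — is the part your proposal would still have to invent.
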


\bigskip

The paper is organized as follow. Slightly generalized versions of Theorems \ref{ThmConvergencePonctuellePasPoids} and  \ref{ThmConcergenceH1Paspoids} are stated and proved in Section \ref{SectionCVH1}. A general strategy to obtain some functional inequalities (including the Poincar\'e inequality) for PDMP by the study of their embedded chain is exposed in Section \ref{SectionVariance} and applied in several illustrative models in Section \ref{SectionExampleVariance}, where in particular Proposition \ref{PropTCPLinea} is proved. A perturbative results for Poincar\'e and log-Sobolev inequalities is stated and proved in an Appendix.

\section{Exponential decay}\label{SectionCVH1}
We keep the notations and assumptions of the introduction. In particular we study the semi-group $(P_t)_{t\geq 0}$ with generator $L$ defined by \eqref{DefinitionGenerateur}.

When $A$ is a linear operator on $\A$ and $\phi$ is a bilinear symmetric one, for $f,g\in\mathcal A$ we define
\[\Gamma_{A,\phi}( f,g) = \frac12 \po A \phi(f,g) - \phi(f,Ag) - \phi(Af,g)\pf.\]
 With respect to $f$, $\Gamma_{A,\phi}(f,f)$ is quadratic, and linear with respect to $A$ and $\phi$. We will always note $f\mapsto \phi(f)$ the quadratic form associated to a bilinear form $f,g\mapsto \phi(f,g)$ and similarly we will always note $f,g\mapsto q(f,g)$ the symetric bilinear form associated by polarization to a quadratic form $f\mapsto q(f)$ on $\A$.
Let
\[\psi(s) = P_s \phi\po P_{t-s} f\pf,\hspace{15pt}s\in[0,t]\]
which interpolates between $\phi \po P_t f\pf$ and $P_t \po \phi f\pf$. Then
\[\psi'(s) = 2P_s \Gamma_{L,\phi}\po P_{t-s} f\pf.\]
To prove Theorems \ref{ThmConvergencePonctuellePasPoids} and \ref{ThmConcergenceH1Paspoids} we should consider $\phi(f) = |\nabla f|^2$. In fact it will be convenient for the applications to work with a weighted gradient $\phi_a(f) = a|\na f|^2$ with $a>0$ a scalar field on $\Omega$ in $\A$ (so that $f\in\A \Rightarrow \phi_a(f)\in\A$).


\begin{lem}\label{LemCalculGamma}
\begin{enumerate}
\item  For all $f\in\A$
\[\Gamma_{b^*\nabla,\phi_a}(f) = \frac{b^*\na a}{2a}\phi_a (f) - a(\na f)^* J_b \na f. \]
\item Suppose there exists a function $M$ on $\Omega$ such that, for all $f\in\A$, $\phi_a ( Qf) \leq M Q\po \phi_a(f)\pf $, and let $I$ be the identity operator on $\A$. Then for all $f\in\A$
\[\Gamma_{\lambda(Q-I),\phi_a}(f) \geq -a(\na f)^*(\na \lambda)(Qf - f) + \frac\lambda2(1-M)\phi_a (f) . \]
\end{enumerate}
\end{lem}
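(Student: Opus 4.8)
The plan is to compute $\Gamma_{A,\phi_a}$ separately for the two pieces of the generator, $A_1 = b^*\nabla$ (the transport part) and $A_2 = \lambda(Q-I)$ (the jump part), using that $\Gamma_{A,\phi}$ is linear in $A$. Since $\phi_a(f) = a|\nabla f|^2$, the computations are of a routine differential-calculus flavour, so I will mainly keep track of which terms combine and which are thrown away by an inequality.

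\textbf{Part (1): the transport term.} I would expand
\[
\Gamma_{b^*\nabla,\phi_a}(f) = \tfrac12\bigl(b^*\nabla(a|\nabla f|^2) - 2a(\nabla f)^*\nabla(b^*\nabla f)\bigr).
\]
For the first term, Leibniz gives $b^*\nabla(a|\nabla f|^2) = (b^*\nabla a)|\nabla f|^2 + a\, b^*\nabla(|\nabla f|^2)$, and $\tfrac12 b^*\nabla(|\nabla f|^2) = \sum_{i,k} b_k (\partial_k \partial_i f)(\partial_i f)$. For the second term I write $\nabla(b^*\nabla f)$ componentwise: $\partial_i(\sum_k b_k \partial_k f) = \sum_k (\partial_i b_k)(\partial_k f) + \sum_k b_k \partial_i\partial_k f$. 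Contracting with $a(\partial_i f)$ and summing over $i$, the $\sum_{i,k} b_k (\partial_i\partial_k f)(\partial_i f)$ pieces cancel exactly against $a \cdot \tfrac12 b^*\nabla(|\nabla f|^2)$, leaving $-a\sum_{i,k}(\partial_i b_k)(\partial_k f)(\partial_i f) = -a(\nabla f)^* J_b \nabla f$ (using $J_b = [\partial_i b_k]_{i,k}$, so this contraction is $(\nabla f)^* J_b \nabla f$ with the index convention of the paper). The surviving Hessian-free term is $\tfrac12(b^*\nabla a)|\nabla f|^2 = \tfrac{b^*\nabla a}{2a}\phi_a(f)$, which is exactly the claimed identity.

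\textbf{Part (2): the jump term.} Here $A_2\phi_a(f) = \lambda(Q-I)(a|\nabla f|^2) = \lambda\bigl(Q(\phi_a(f)) - \phi_a(f)\bigr)$, while
\[
\phi_a(A_2 f) + \phi_a(f, A_2 f) \text{-type terms} = 2a(\nabla f)^*\nabla\bigl(\lambda(Qf - f)\bigr).
\]
Expanding $\nabla(\lambda(Qf-f)) = (\nabla\lambda)(Qf-f) + \lambda\nabla(Qf-f) = (\nabla\lambda)(Qf-f) + \lambda(\nabla Qf - \nabla f)$, the cross term splits into $2a(\nabla f)^*(\nabla\lambda)(Qf-f)$ and $2a\lambda(\nabla f)^*(\nabla Qf) - 2a\lambda|\nabla f|^2$. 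Collecting everything into $\Gamma_{A_2,\phi_a} = \tfrac12(A_2\phi_a(f) - \text{cross terms})$, the $|\nabla f|^2$ contributions are $-\tfrac\lambda2 a|\nabla f|^2 + a\lambda|\nabla f|^2 = \tfrac\lambda2\phi_a(f)$, the $\nabla\lambda$ term contributes $-a(\nabla f)^*(\nabla\lambda)(Qf-f)$, and the remaining $Q$-dependent piece is $\tfrac\lambda2 Q(\phi_a(f)) - a\lambda(\nabla f)^*(\nabla Qf)$. To conclude I use the hypothesis $\phi_a(Qf) \leq M\, Q(\phi_a(f))$: by Cauchy--Schwarz in the weighted inner product (or simply $2uv \le u^2 + v^2$ applied to $\sqrt{a}\nabla f$ and $\sqrt{a}\nabla Qf = \phi_a(Qf)^{1/2}$ pointwise), $2a(\nabla f)^*(\nabla Qf) \le \phi_a(f) + \phi_a(Qf) \le \phi_a(f) + M\,Q(\phi_a(f))$. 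Hence $\tfrac\lambda2 Q(\phi_a(f)) - a\lambda(\nabla f)^*(\nabla Qf) \ge \tfrac\lambda2 Q(\phi_a(f)) - \tfrac\lambda2\phi_a(f) - \tfrac\lambda2 M\,Q(\phi_a(f)) = \tfrac\lambda2(1-M)Q(\phi_a(f)) - \tfrac\lambda2\phi_a(f)$. Wait --- I need the bound in terms of $\phi_a(f)$, not $Q(\phi_a(f))$; the correct pairing is $2a(\nabla f)^*(\nabla Qf) \le s\,\phi_a(f) + s^{-1}\phi_a(Qf)$ and I should track that the eventual lower bound the paper wants is $\tfrac\lambda2(1-M)\phi_a(f) - a(\nabla f)^*(\nabla\lambda)(Qf-f)$, so the $Q(\phi_a(f))$ terms must cancel: indeed $\tfrac\lambda2 Q(\phi_a(f))$ from the generator minus $a\lambda(\nabla f)^*(\nabla Qf)$ bounded below via $a\lambda(\nabla f)^*(\nabla Qf) \le \tfrac\lambda2\phi_a(f) + \tfrac\lambda2\phi_a(Qf) \le \tfrac\lambda2\phi_a(f) + \tfrac\lambda2 M\,Q(\phi_a(f))$ gives $\ge \tfrac\lambda2(1-M)Q(\phi_a(f)) - \tfrac\lambda2\phi_a(f)$, and combining with the earlier $+\tfrac\lambda2\phi_a(f)$ the stray $\phi_a(f)$ cancels, leaving $\tfrac\lambda2(1-M)Q(\phi_a(f)) - a(\nabla f)^*(\nabla\lambda)(Qf-f)$. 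Since $M \le 1$ is not assumed in general, one keeps $Q(\phi_a(f)) \ge 0$ only when $M \le 1$; more carefully, the claimed bound has $\phi_a(f)$ so presumably the paper uses $\phi_a(Qf) \le M Q\phi_a(f)$ the other way or assumes positivity appropriately --- I would follow the exact sign bookkeeping in the paper's convention at this point.

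\textbf{Main obstacle.} The only real subtlety is the Cauchy--Schwarz step in Part (2): one must choose the splitting of the cross term $2a(\nabla f)^*(\nabla Qf)$ so that, after invoking $\phi_a(Qf)\le M\,Q(\phi_a(f))$, the $Q(\phi_a(f))$ contribution has the right sign (nonnegative, times $1-M$) and the spurious $\phi_a(f)$ terms cancel against those coming from $A_2\phi_a(f)$ and from the $-\lambda\phi_a(f)$ piece. Everything else is bookkeeping: the transport identity in Part (1) is an exact Leibniz computation with a clean Hessian cancellation, and the linearity of $\Gamma_{A,\phi}$ in $A$ lets the two parts be assembled independently.
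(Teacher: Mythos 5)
\textbf{Part 1 is correct} and matches the paper's computation: the Hessian terms cancel as you describe, leaving the claimed identity.

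\textbf{Part 2 has a genuine gap} at the Cauchy--Schwarz/Young step, and you noticed it yourself but did not resolve it. Your final estimate reads
\[
\Gamma_{\lambda(Q-I),\phi_a}(f) \ \geq \ \tfrac\lambda2(1-M)\,Q\bigl(\phi_a(f)\bigr) - a(\nabla f)^*(\nabla\lambda)(Qf-f),
\]
with $Q(\phi_a(f))$ where the lemma has $\phi_a(f)$; these are not pointwise comparable, so this does not establish the claim (and it would not feed into the balance conditions of Theorems \ref{ThmConvergencePonctuelle} and \ref{ThmConcergenceH1}, which require a bound in $\phi_a(f)$). The problem is that you applied the symmetric Young inequality $2uv\le u^2+v^2$ \emph{first} and then used the hypothesis, which places $M$ on the $Q\phi_a(f)$ factor. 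The paper does it in the other order: first use the hypothesis inside the square root,
\[
2a(\nabla f)^*(\nabla Qf)\ \leq\ 2\sqrt{\phi_a(f)\,\phi_a(Qf)}\ \leq\ 2\sqrt{M\,\phi_a(f)\cdot Q\phi_a(f)}\ \leq\ M\phi_a(f) + Q\phi_a(f),
\]
so that the AM--GM step lands the $M$ on $\phi_a(f)$. Then
\[
\tfrac\lambda2\Bigl(Q\phi_a(f) + \phi_a(f) - M\phi_a(f) - Q\phi_a(f)\Bigr) = \tfrac\lambda2(1-M)\phi_a(f),
\]
the $Q\phi_a(f)$ terms cancel exactly, and the stated lower bound follows. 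Equivalently, in your parametrized form $2a(\nabla f)^*(\nabla Qf)\le s\,\phi_a(f) + s^{-1}\phi_a(Qf)\le s\,\phi_a(f)+s^{-1}M\,Q\phi_a(f)$, you must choose $s=M$ (not $s=1$, which is what your displayed line reverts to). Everything else in your Part 2 bookkeeping is correct.
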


\begin{proof}
First we note that
\begin{eqnarray*}
 \na\po b^* \na f\pf & = & J_b \na f +  H_f b
\end{eqnarray*}
with $H_f(x) = \co \p_i \p_k f (x)\cf_{1\leq i,k \leq d}$ the Hessian of $f$, and
\begin{eqnarray*}
b^* \na\po a | \na f |^2\pf & = & (b^* \na a)|\na f|^2  + 2 a b^* H_f \na f
\end{eqnarray*}
Thus
\begin{eqnarray*}
\Gamma_{b^*\nabla,\phi_a}(f) & = & \frac12 b^* \na\po a | \na f |^2\pf - a(\na f)^* \na\po b^* \na f\pf\\
& = & \frac12(b^* \na a)|\na f|^2 - a (\na f)^*J_b \na f.
\end{eqnarray*}
As far as the second point is concerned,
\begin{eqnarray*}
\Gamma_{\lambda(Q-I),\phi_a}(f) & = & \frac12 \lambda \po Q\po \phi_a(f)\pf - \phi_a(f)\pf - a(\na f)^*(\na\lambda)(Qf-f) - \lambda a(\na f)^*(\na Qf - \na f)\\
& \geq & \frac\lambda 2 \po Q\po \phi_a(f)\pf +  \phi_a(f) - 2\sqrt{\phi_a(f) \phi_a( Qf)}\pf- a(\nabla f)^*(\nabla \lambda)(Qf-f).
\end{eqnarray*}
We conclude by
\[2\sqrt{\phi_a(f) \phi_a( Qf)} \leq 2\sqrt{M \phi_a(f) Q\phi_a( f)}\leq M\phi_a(f) + Q\po \phi_a(f) \pf.\]
\end{proof}
We can now state the following :
\begin{thm}\label{ThmConvergencePonctuelle}
 Assume $\lambda$ is constant and there exist a function $M$ on $\Omega$ and a constant $\eta \in\R$ such that,  for all $f\in\A$, $\phi_a\po Qf\pf \leq M Q\po \phi_a(f)\pf$ and
 \[\forall (x,u)\in\Omega\times\R^d,\hspace{20pt}2u^*J_b(x)u + \po\lambda\po M(x)-1\pf - \frac{b^*\na a(x)}{a(x)} + \eta\pf |u|^2  \leq 0.\]
Then
 \[ \phi_a( P_t f )  \leq e^{-\eta t} P_t\po \phi_a(f)\pf.\]
\end{thm}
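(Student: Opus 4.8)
The plan is to run exactly the interpolation scheme set up just before Lemma \ref{LemCalculGamma}, specialized to $\phi = \phi_a$, and to reduce the statement to a pointwise lower bound on $\Gamma_{L,\phi_a}$ furnished by that lemma. Fix $t>0$ and $f\in\A$, and put $\psi(s) = P_s\phi_a\po P_{t-s}f\pf$ for $s\in[0,t]$, so that $\psi(0) = \phi_a\po P_t f\pf$, $\psi(t) = P_t\po\phi_a f\pf$, and $\psi'(s) = 2P_s\Gamma_{L,\phi_a}\po P_{t-s}f\pf$. Since each $P_s$ is a positivity-preserving Markov operator and $P_{t-s}f\in\A$, it is enough to establish the pointwise inequality
\[\Gamma_{L,\phi_a}(g)\ \geq\ \frac{\eta}{2}\,\phi_a(g)\qquad\text{for all }g\in\A.\]
Indeed, this gives $\psi'(s)\geq\eta\,\psi(s)$ on $[0,t]$, hence $s\mapsto e^{-\eta s}\psi(s)$ is nondecreasing, and in particular $\psi(0)\leq e^{-\eta t}\psi(t)$, which is the desired estimate.

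To prove the pointwise inequality I would first use that $\Gamma_{A,\phi}$ is linear in the operator $A$, so that $\Gamma_{L,\phi_a} = \Gamma_{b^*\na,\phi_a} + \Gamma_{\lambda(Q-I),\phi_a}$, and then invoke the two parts of Lemma \ref{LemCalculGamma}. Because $\lambda$ is assumed constant, $\na\lambda = 0$, so the term $-a(\na g)^*(\na\lambda)(Qg-g)$ in part (2) drops out; adding the two contributions gives
\[\Gamma_{L,\phi_a}(g)\ \geq\ -a\,(\na g)^*J_b\,\na g + \po\frac{b^*\na a}{2a} + \frac{\lambda}{2}(1-M)\pf\phi_a(g).\]

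It then remains to check, pointwise in $x\in\Omega$, that the right-hand side is at least $\tfrac{\eta}{2}\phi_a(g)(x)$. Writing $u = \na g(x)$ and using $\phi_a(g)(x) = a(x)|u|^2$ with $a(x)>0$, the right-hand side equals $a(x)$ times
\[-u^*J_b(x)u + \po\frac{b^*\na a(x)}{2a(x)} + \frac{\lambda}{2}(1-M(x))\pf|u|^2.\]
The assumption of the theorem, divided by $2$ and rearranged, says precisely that $-u^*J_b(x)u \geq \tfrac12\po\eta - \lambda(1-M(x)) - \tfrac{b^*\na a(x)}{a(x)}\pf|u|^2$; substituting this, the $\tfrac{b^*\na a}{2a}$ terms cancel, the $\tfrac{\lambda}{2}(1-M)$ terms cancel, and what is left is $\tfrac{\eta}{2}|u|^2$. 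Multiplying back by $a(x)$ yields $\Gamma_{L,\phi_a}(g)(x)\geq\tfrac{\eta}{2}\phi_a(g)(x)$, completing the reduction.

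Essentially all the content sits in Lemma \ref{LemCalculGamma}; once it is available the argument is a one-line Gronwall estimate. The only points demanding care are the usual regularity bookkeeping — that $s\mapsto\psi(s)$ is genuinely $\mathcal C^1$ with the stated derivative and that $\phi_a(g)$, $Qg$, etc.\ remain in $\A$ so that all the manipulations are licit, which is exactly what the standing assumptions on $\A$ grant — and the small algebraic cancellation in the last step, which is where a sign error would be easiest to commit. I do not expect a genuine obstacle beyond these.
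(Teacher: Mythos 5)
Your argument is correct and is essentially the paper's own proof: interpolate via $\psi(s)=P_s\phi_a\po P_{t-s}f\pf$, combine the two parts of Lemma \ref{LemCalculGamma} (with $\na\lambda=0$) to get the pointwise bound $\Gamma_{L,\phi_a}(g)\geq\frac{\eta}{2}\phi_a(g)$, and conclude by Gronwall. The paper simply leaves the algebraic cancellation in the last step implicit, whereas you write it out.
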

In particular with $a=1$ we retrieve Theorem \ref{ThmConvergencePonctuellePasPoids}.
\begin{proof}
From Lemma \ref{LemCalculGamma}, since in the constant rate case $ \na \lambda = 0$,
\begin{eqnarray*}
\Gamma_{L,\phi_a}(f) & \geq & - a(\na f)^* J_b \na f + a\po \frac{b^*\na a}{2a}  + \frac\lambda2(1-M)\pf |\na f|^2\\
& \geq & \frac\eta 2 \phi_a(f).
\end{eqnarray*}
Hence if $\psi(s) = P_s\phi_a(P_{t-s} f)$,
\[\psi'(s) = 2P_s\Gamma_{L,\phi_a}(P_{t-s} f) \geq \eta \psi(s) \]
and $\psi(t) \geq e^{\eta t} \psi(0)$, which concludes.
\end{proof}
Remark that we did note use the ergodicity of the process here, and that $\eta$ can be negative.

\bigskip

This commutation between the semigroup and the gradient leads to a contraction in Wasserstein distance. More precisely, define on $\Omega$ the distance associated to the weighted gradient $D=\sqrt{ a}\na$ by
\[d(x,y) =\inf\left\{ \int_0^1 \frac{|\gamma'(s)|}{\sqrt{a\po\gamma(s)\pf}}ds ,\ \gamma : [0,1]\rightarrow \Omega,\ smooth,\  \gamma(0)=x, \gamma(t) = y\right\}\]
and the associated Wasserstein distance between two probability laws $\nu_1,\nu_2$ having a finite $p^{th}$ moment (\emph{i.e.} for which there exists a $x_0\in\Omega$ with $\nu_i\co d^p(.,x_0) \cf <\infty$) by
\[\mathcal W_{d,p}(\nu_1,\nu_2) = \underset{X\sim \nu_1,\ Y\sim \nu_2}{\inf}\po \E\co d^p(X,Y)\cf\pf^\frac1p.\]
A function $f$ will be called $\kappa$-Lipschitz with respect to $D$ if $\forall x,y\in\Omega$,
\[f(x)-f(y) \leq \kappa d(x,y).\]
This is equivalent for a smooth function to $\|D f\|_\infty \leq \kappa$, 
and we have the Kantorovich-Rubinstein dual representation (see \cite{VillaniOldNew})
 \[\mathcal W_{d,1}(\nu_1,\nu_2) = \sup\left\{\nu_1 f  - \nu_2 f,\ \|D f \|_\infty \leq 1\right\},\]
where we use the operator notation $\nu f = \int f d\nu$. 

\bigskip

Recall that by duality a Markov semi-group acts on the right on probability laws by
\[\po \nu_1 P_t\pf f := \nu_1 \po P_t f\pf.\]
If $P_t$ were absolutely continuous with respect to the Lebesgue measure for $t>0$ - which is not  the case for a PDMP since for all time $t$ there is a non-zero probability that the process hasn't jumped yet -  the gradient estimate of Theorem \ref{ThmConvergencePonctuelle} would yield, from \cite[Theorem 2.2]{Kuwada}, a contraction of the $\mathcal W_{d,2}$ distance :
 \[\mathcal W_{d,2}\po \nu_1 P_t,\ \nu_2 P_t\pf \leq e^{-\frac\eta2 t}\mathcal W_{d,2}\po \nu_1 ,\ \nu_2 \pf .\]
 Instead of trying to adapt Kuwada's result, since our work is more concerned about variance than Wasserstein distance, we will only state the weaker result :
\begin{cor}\label{ThmCVWasserstein}
In the settings of Theorem \ref{ThmConvergencePonctuelle}, for all laws $\nu_1$, $\nu_2$ with finite first moment, if $P_t\nu_1$ and $P_t\nu_2$ still have finite first moment,
 \[\mathcal W_{d,1}\po \nu_1 P_t,\ \nu_2 P_t\pf \leq e^{-\frac\eta2 t}\mathcal W_{d,1}\po \nu_1 ,\ \nu_2 \pf .\]
\end{cor}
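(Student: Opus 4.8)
The plan is to derive the contraction from the Kantorovich--Rubinstein dual representation recalled above, the main point being that the gradient estimate of Theorem \ref{ThmConvergencePonctuelle} forces the semi-group to map functions that are $1$-Lipschitz for $D$ to functions that are $e^{-\eta t/2}$-Lipschitz for $D$.

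First I would observe that for a smooth $f$ the condition $\|Df\|_\infty\le\kappa$ is exactly the pointwise bound $\phi_a(f)\le\kappa^2$, since $|Df|^2 = a|\na f|^2 = \phi_a(f)$, and that — through the definition of $d$ as an infimum of lengths of curves — this bound makes $f$ a $\kappa$-Lipschitz function for $d$. Hence, if $f\in\A$ satisfies $\|Df\|_\infty\le1$, Theorem \ref{ThmConvergencePonctuelle} gives, for every $x\in\Omega$,
\[\phi_a\po P_t f\pf(x)\ \le\ e^{-\eta t}\,P_t\po\phi_a(f)\pf(x)\ \le\ e^{-\eta t},\]
so that $\|DP_t f\|_\infty\le e^{-\eta t/2}$ and $P_t f$ is $e^{-\eta t/2}$-Lipschitz for $d$ (note $P_t f\in\A$, so it is smooth). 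Since $\nu_1$ and $\nu_2$ have finite first moment, $\nu_i\po P_t f\pf$ is finite and the Kantorovich--Rubinstein inequality applied to $\po\nu_1,\nu_2\pf$ yields
\[\po\nu_1 P_t\pf f-\po\nu_2 P_t\pf f\ =\ \nu_1\po P_t f\pf-\nu_2\po P_t f\pf\ \le\ e^{-\eta t/2}\,\mathcal W_{d,1}\po\nu_1,\nu_2\pf.\]
Taking the supremum over all such $f$ and invoking the dual representation of $\mathcal W_{d,1}\po\nu_1 P_t,\nu_2 P_t\pf$ — legitimate precisely because $\nu_1 P_t$ and $\nu_2 P_t$ are assumed to have finite first moment — gives the announced bound.

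The only delicate point is that the dual formula runs over \emph{all} functions with $\|Df\|_\infty\le1$, whereas the computations above are licit only for $f\in\A$; one must therefore check that it suffices to test against $\A$. Given a $1$-Lipschitz $g$, I would approximate it by functions $g_n\in\A$ with $\|Dg_n\|_\infty\le1+\varepsilon_n$, $\varepsilon_n\to0$, and $g_n\to g$ locally uniformly (mollification adapted to $a$, followed, if needed, by a truncation ensuring polynomial growth). Since $g$, and hence each $g_n$, is dominated by $C\po 1+d(\cdot,x_0)\pf$ with $C$ independent of $n$, and $C\po 1+d(\cdot,x_0)\pf$ is integrable against $\nu_i$ and against $\nu_i P_t$ by hypothesis, dominated convergence gives first $P_t g_n\to P_t g$ pointwise $\nu_i$-almost everywhere and then $\po\nu_i P_t\pf g_n\to\po\nu_i P_t\pf g$; applying the inequality of the previous paragraph to $g_n/(1+\varepsilon_n)$ and letting $n\to\infty$ transfers it to $g$. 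This limiting argument — routine, but where the finiteness of the first moments of $\nu_1 P_t$ and $\nu_2 P_t$ is genuinely used — is the main, and essentially only, obstacle.
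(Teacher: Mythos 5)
Your proof is correct and follows the same route as the paper: extract from Theorem~\ref{ThmConvergencePonctuelle} the uniform bound $\|DP_tf\|_\infty\le e^{-\eta t/2}\|Df\|_\infty$ and then conclude by the Kantorovich--Rubinstein duality. You also carefully address the approximation of arbitrary $1$-Lipschitz test functions by functions in $\A$, a technical step the paper leaves implicit; this is a welcome addition but not a different argument.
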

\begin{proof}
 Theorem \ref{ThmConvergencePonctuelle} yields the weaker gradient estimate
 \[ \|D P_t f\|_\infty \leq e^{-\frac\eta2 t} P_t \|D f\|_\infty = e^{-\frac\eta2 t} \|D f\|_\infty.\]
 This implies the $\mathcal W_{d,1}$ decay, thanks to the Kantorovich-Rubinstein dual representation

\end{proof}
Note that the invariant measure does not intervene neither in Theorem \ref{ThmConvergencePonctuelle} nor in Corollary \ref{ThmCVWasserstein}, so that its existence and uniqueness are not necessary. Besides, on a complete space, a contraction of the Wasserstein distance would imply ergodicity, from \cite[Theorem 5.23]{Chen}.

We won't push the analysis further concerning the Wasserstein distance, but refer to the study in \cite{Malrieu2011} of the TCP process where an exponential decay is first obtained for a distance equivalent to $d(x,y) = \sqrt{|x-y|}$ and then is transposed to $d(x,y)=|x-y|^p$ via moments estimates and H\"older inequality. For further considerations on gradient-semigroup commutation, one shall consult \cite{Joulin2013,Ambrosio,Kuwada}.

\bigskip

We now turn to the non-constant jump rate case. Let $a \in\A$ be a non-negative scalar field on $\Omega$. Throughout all the text we will say a probability measure $\nu$ satisfies a weighted Poincar\'e inequality with constant $c$ and weight $a$ if for all $f\in\A$
\begin{eqnarray}\label{EquationPoincare}
\nu f^2 - \po \nu f\pf^2 & \leq & c \nu \po a |\na f|^2\pf.
\end{eqnarray}
Let $V_t = \mu \po P_t f\pf^2 - \po \mu f\pf^2$ and $W_t = \mu \phi_a(P_t f)$. Note that in the introduction $W_t$ was defined with the constant weight $a=1$, so that the following is slightly more general than Theorem~\ref{ThmConcergenceH1Paspoids}:
\begin{thm}\label{ThmConcergenceH1}
Assume that $\mu$ satisfies the weighted Poincar\'e inequality \eqref{EquationPoincare} with constant $c$ and weight $a$, that $\mu$-amost everywhere $\lambda > 0$ and that there exist a function $M$ and constants $\eta,\beta>0$ such that for $\mu$-almost all $x\in\Omega$, for all $f\in\A$ and for all $u\in\R^d$,  $\phi_a (Qf) \leq M Q\po \phi_a(f)\pf$ and
 \begin{eqnarray}\label{EquationThm1}
 u^*\po 2J_b(x) + \frac{a}{\beta \lambda(x)}\na \lambda(x) (\na \lambda(x))^* +\lambda(x)\po M(x)-1\pf - \frac{b^*\na a(x)}{a(x)} + \eta\pf u   & \leq &  0.
 \end{eqnarray}
  Then
 \[W_t + \beta V_t \leq e^{-\frac{\eta t}{\beta c+1}}\po W_0 + \beta V_0\pf ,\]
 and
 \[W_t \leq (1+\beta c)  e^{-\frac{\eta t}{\beta c+1}} W_0.\]
\end{thm}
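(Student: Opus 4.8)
The plan is to combine a Bakry--\'Emery-type lower bound for $\Gamma_{L,\phi_a}$ with the weighted Poincar\'e inequality, so as to turn $\Phi_t := W_t + \beta V_t$ into a genuine Lyapunov functional, i.e. to prove $\partial_t \Phi_t \leq -\frac{\eta}{1+\beta c}\Phi_t$ and conclude by Gronwall.

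\emph{Step 1 (time derivatives of the energies).} Using the interpolation $\psi(s) = P_s\phi_a(P_{t-s}f)$ of Section~\ref{SectionCVH1}, whose derivative is $\psi'(s) = 2P_s\Gamma_{L,\phi_a}(P_{t-s}f)$, together with $\mu P_s = \mu$, one gets $\partial_t W_t = -2\mu\,\Gamma_{L,\phi_a}(P_tf)$. Likewise, from $\Gamma(h) = \frac12 L(h^2) - hLh$ and $\mu L = 0$ comes the classical identity $\partial_t V_t = -2\mu\,\Gamma(P_tf)$. All these manipulations are licit on $\A$.

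\emph{Step 2 (the key lower bound -- the main obstacle).} Since $\Gamma_{A,\phi_a}$ is linear in $A$ and $L = b^*\na + \lambda(Q-I)$, Lemma~\ref{LemCalculGamma} gives
\[
\Gamma_{L,\phi_a}(f)\ \ge\ \frac{b^*\na a}{2a}\phi_a(f) - a(\na f)^*J_b\na f - a(\na f)^*(\na\lambda)(Qf-f) + \frac\lambda2(1-M)\phi_a(f).
\]
The only term of undetermined sign is the cross term, which I would control by Young's inequality with parameter $\frac{a}{\beta\lambda}$ (here $\lambda>0$ $\mu$-a.e. is needed):
\[
-a(\na f)^*(\na\lambda)(Qf-f)\ \ge\ -\frac{a^2}{2\beta\lambda}(\na f)^*(\na\lambda)(\na\lambda)^*\na f\ -\ \frac{\beta\lambda}{2}(Qf-f)^2 .
\]
Merging the first piece into the quadratic form in $\na f$ produces exactly the matrix $J_b + \frac{a}{2\beta\lambda}\na\lambda(\na\lambda)^*$; then hypothesis \eqref{EquationThm1}, read at $u=\na f$, bounds $a(\na f)^*\big(J_b + \frac{a}{2\beta\lambda}\na\lambda(\na\lambda)^*\big)\na f$ by $\frac12\big(\frac{b^*\na a}{a} + \lambda(1-M) - \eta\big)\phi_a(f)$, which cancels the $\frac{b^*\na a}{2a}\phi_a$ and $\frac\lambda2(1-M)\phi_a$ contributions and leaves
\[
\Gamma_{L,\phi_a}(f)\ \ge\ \frac\eta2\,\phi_a(f) - \frac{\beta\lambda}{2}(Qf-f)^2 .
\]
I expect the delicate point to be precisely the matching of coefficients here: the Young parameter must be chosen so that the residual $(\na\lambda)(\na\lambda)^*$-term is exactly the one appearing in \eqref{EquationThm1}, and so that the $b^*\na a$ and $\lambda(1-M)$ terms cancel identically.

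\emph{Step 3 (assembling and conclusion).} Integrating the bound of Step~2 against $\mu$ with $f$ replaced by $P_tf$, and using Step~1,
\[
\partial_t W_t\ \le\ -\eta W_t + \beta\,\mu\!\left(\lambda(QP_tf-P_tf)^2\right).
\]
Writing $Qh(x)=\int h\,Q(x,\mathrm d y)$, Jensen's inequality for the Markov kernel $Q$ gives the pointwise estimate $\lambda(Qh-h)^2 \le \lambda\big(Q(h^2)-2hQh+h^2\big) = 2\Gamma(h)$, whence $\mu(\lambda(QP_tf-P_tf)^2) \le 2\mu\Gamma(P_tf) = -\partial_t V_t$. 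Therefore $\partial_t(W_t+\beta V_t) \le -\eta W_t$. Finally the weighted Poincar\'e inequality \eqref{EquationPoincare} applied to $P_tf$ reads $V_t\le cW_t$, so $W_t \ge \frac{1}{1+\beta c}(W_t+\beta V_t)$ and $\partial_t(W_t+\beta V_t) \le -\frac{\eta}{1+\beta c}(W_t+\beta V_t)$; Gronwall's lemma yields the first estimate. The second follows from $W_t \le W_t+\beta V_t$ and, at $t=0$, $W_0+\beta V_0\le (1+\beta c)W_0$ (again by $V_0\le cW_0$).
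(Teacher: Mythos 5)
Your proof is correct and follows essentially the same route as the paper: Lemma~\ref{LemCalculGamma} plus Young's inequality with parameter $\frac{a}{\beta\lambda}$ to obtain $\Gamma_{L,\phi_a}(f)\ge\frac\eta2\phi_a(f)-\frac{\beta\lambda}{2}(Qf-f)^2$, then Cauchy--Schwarz/Jensen for $Q$ to bound $\mu\big(\lambda(QP_tf-P_tf)^2\big)$ by $-\partial_tV_t$, and finally the weighted Poincar\'e inequality and Gronwall. The coefficient matching you flag as delicate does indeed work out exactly as you anticipate.
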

\begin{proof}
Since $\mu$ is the invariant measure of the process, $\mu L g = 0$ for all $g\in\A$. In particular if $\phi$ is a quadratic form on $\A$, $\mu \po L \phi(f)\pf = 0$ and
\begin{eqnarray*}
\p_t \po \mu \po \phi(P_t f)\pf \pf & = & 2 \mu \po \phi(P_t f, LP_t f)\pf \\
& = & -2\mu \Gamma_{L,\phi}(P_t f).
\end{eqnarray*}
In particular
\begin{eqnarray*}
\p_t W_t & = & - 2 \mu \Gamma_{L,\phi_a}(P_t f).
\end{eqnarray*}
From Lemma \ref{LemCalculGamma}, 
\begin{eqnarray*}
\Gamma_{\lambda(Q-I),\phi_a}(f)  & \geq & -a(\na f)^*(\na \lambda)(Qf - f) + \frac\lambda2(1-M)\phi_a (f) \\
& \geq & - \frac{a^2}{2\beta\lambda }|(\na f)^*\na\lambda |^2 - \frac{\beta \lambda}{2}(Qf - f)^2+ \frac\lambda2(1-M)\phi_a (f) .
\end{eqnarray*}
Again from Lemma \ref{LemCalculGamma} and from Inequality \eqref{EquationThm1},
\begin{eqnarray*}
\Gamma_{L,\phi_a}(f) & \geq &  \frac{\eta}{2} \phi_a(f) - \frac{\beta\lambda}{2}(Qf - f)^2.
\end{eqnarray*}
On the other hand, if $\phi_2 (f) = f^2$ then $\Gamma_{L,\phi_2}$ is the usual \emph{carr\'e du champ} operator. From the Leibniz rule, $\Gamma_{b^*\na,\phi_2}f = 0$, so that
\begin{eqnarray*}
\p_t V_t & = & -2\mu \Gamma_{\lambda(Q-I),\phi_2} (P_t f) \\
& = & - \mu \lambda\po Q (P_t f)^2 + (P_t f)^2 - 2 (P_t f)(QP_t f)\pf\\
& \leq & -\mu \lambda \po QP_t f - P_t f\pf^2
\end{eqnarray*}
the last inequality being a consequence of the Cauchy-Schwartz inequality for $Q$. At the end of the day, we get
\[\p_t \po W_t + \beta V_t \pf \leq - \eta W_t\]
and, thanks to the weighted Poincar\'e inequality \eqref{EquationPoincare},
\[\p_t \po W_t + \beta V_t \pf \leq - \frac{\eta}{1+\beta c}( W_t + \beta V_t),\]
which yields the first assertion. Then
\[W_t \leq W_t + \beta V_t \leq e^{-\frac{\eta t}{\beta c+1}}(W_0+\beta V_0)  \leq (1+\beta c) e^{-\frac{\eta t}{\beta c+1}} W_0.\]
\end{proof}

Note  that $\eta$ could depend on $x$, so that the weight that intervenes in the Poincar\'e inequality may be different from $a$. For instance for the TCP with linear rate on $\R_+$ (Example \ref{ExampleTCPLInear}), one could consider $a(x) = x$ and $\eta(x) = - \kappa - \alpha x$ for some $\kappa,\alpha>0$. Then it would be sufficient to prove an inequality with weight $\tilde a(x) = 1+x$, which is weaker than both the classical inequality with constant weight and the inequality with weight $a$.

\section{Functional inequalities for PDMP}\label{SectionVariance}

This section is devoted to the obtention of the Poincar\'e inequality \eqref{EquationPoincare} and of slightly more general functional inequalities for $\mu$ the invariant measure of the process $(X_t)_{t\geq 0}$ with generator \eqref{DefinitionGenerateur}.

\subsection{Confining operators}

The variance is a way among others to quantify the distance to equilibrium. 
In this section we suppose that for all $f\in\A$ the so-called $p$-entropies
\begin{eqnarray*}
 \ent f & = &  \frac{\mu f^2 - \po\mu f^{\frac2p}\pf^p}{p-1} \hspace{20pt} \text{ for }p\in(1,2],\\
\text{Ent}_1 f & = & \mu \po f^2 \log f^2\pf - \po \mu f^2\pf \log \po \mu f^2\pf \end{eqnarray*}
are well-defined. We say that $\mu$ satisfies a Beckner's inequality $\mathcal B(p,c)$ if
\begin{eqnarray}\label{InegaliteBeckner}
\forall f\in\A,\hspace{20pt}\ent f  & \leq  & c\mu |\na f|^2.
\end{eqnarray}
For $p=2$ this is the Poincar\'e inequality, for $p=1$ this is the Gross log Sobolev one. Since $\ent f$ is non increasing with $p \in (1,2]$ (see \cite{Latala}; note that we took the definitions of \cite{BolleyGentil}), $\mathcal B(p,c)$ implies $\mathcal B\po q,c\pf$ whenever $q\geq p$. On the other hand by Jensen inequality $(1-p) \ent f$ is non decreasing with $p \in [1,2]$. In particular all Beckner's for $p\in(1,2]$ are equivalent up to some factor. For the global study of this inequalities and of more general $\Phi$-entropies, we refer to \cite{Chafai2004} and \cite{BolleyGentil}.

For $\alpha\in[0,1]$ we say $\mu$ satisfies a generalized Poincar\'e inequality $\mathcal I(\alpha,c)$ if
\begin{eqnarray}\label{InegaliteBeckner}
\forall f\in\A,\ \forall p\in(1,2],\hspace{20pt}(1-p)^{1-\alpha}\ent f  & \leq  & c \mu |\na f|^2.
\end{eqnarray}
For $\alpha=0$ this is still the Poincar\'e inequality, for $\alpha=1$ this is the log Sobolev one, and for $\alpha\in(0,1)$ this is an interpolation between these two cases which implies the following concentration property: there exists a constant $L>0$ such that for any borellian set $A$ with $\mu(A)\geq \frac12$, if $A_t$ is the set of points at distance at most $t$ from $A$, then $\mu(A_t)\geq 1 - e^{L t^{\frac{2}{2-\alpha}}}$ (see~\cite{Latala}). To prove $\mathcal I(\alpha,c)$ is equivalent to prove $\mathcal B\po p,c(1-p)^{\alpha-1}\pf$ for all $p\in(1,2)$.

\bigskip

In this section, for the sake of simplicity, we won't consider weighted inequalities such as the weighted Poincar\'e inequality \eqref{EquationPoincare} with $a\neq 1$. Everything would work the same, and, at least in dimension one, a weighted inequality can be seen as a non-weighted one through a change of variable (see an application in Section \ref{ExampleTCPLInear}). 

Remark that if $\mu$ satisfies $\mathcal B(p,c)$ for $p\in[1,2]$, then it satisfies a Poincar\'e inequality. In this case, providing the inequality \eqref{EquationThm1} of Theorem \ref{ThmConcergenceH1} holds, $W_t$ decays exponentialy fast, and 
\[\ent P_t f\hspace{15pt} \leq\hspace{15pt} c W_t \hspace{15pt}\leq\hspace{15pt} c(1+\beta c) e^{-\frac{\eta t}{1+\beta c}} W_0.\]


Let $\psi:\Omega\rightarrow \Omega$ be a smooth function with Jacobian matrix $J_{\psi}$, and let $|J_\psi|$ be the euclidian operator norm of $J_\psi$, namely
\[| J_\psi | = \sup \left\{ |J_\psi u|,\ u\in\R^d,\ |u|=1\right\}.\]
We say $\psi$ is $\gamma$-Lipschitz (where $\gamma \in\R_+$) if for all $x\in\Omega $, $|J_\psi(x) | \leq \gamma$. It is clear that in this case when the law of a random variable $Z$ satisfies $\mathcal B(p,c)$ then the law of $\psi(Z)$ satisfies $\mathcal B(p,\gamma^2 c)$. 
 In order to get Beckner's inequalities for the invariant law of a PDMP we will prove a generalization of this fact, based on an initial idea of Malrieu and Talay \cite{MalrieuTalay}.

Let $H$ be a Markov kernel on $\Omega$ that fixes $\A$.
\begin{defi}\label{DefContractifEntropie}
 Let $c,\gamma >0$, $p\in[1,2]$. We say that $H$ is $(c,\gamma,p)$-confining if both the following conditions are satisfied :
 \begin{itemize}
  \item \emph{sub-commutation}: $\forall f\in\A,\ \forall x\in \Omega$, 
  \begin{eqnarray}\label{EquaConfineSubComm}
   \left|\na \po H f^\frac2p\pf^\frac p2\right|^2(x)  & \leq & \gamma H|\na f|^2(x).
  \end{eqnarray}
  \item \emph{Local Beckner's inequality:}  $\forall f\in\A,\ \forall x\in \Omega,$, 
  \begin{eqnarray}\label{EquaConfineLocalB}
  \frac{H f^2(x) - \po H f^{\frac2p}\pf^p(x)}{p-1} & \leq & c H |\na f|^2(x).
  \end{eqnarray}
  if $p>1$ and
\begin{eqnarray}\label{EquaConfineLocalB1}
  H \po f^2 \ln f^2 \pf (x) -  H f^2(x)\ln H f^2 (x) & \leq & c H |\na f|^2(x).
  \end{eqnarray}
  if $p=1$.
 \end{itemize}
If $\gamma <1$ we say $H$ is $(c,\gamma,p)$-contractive. When there is no ambiguity for $p$, $H$ will simply be called confining (or contractive) if there exist $c,\gamma >0$ satisfying both conditions.
\end{defi}
 Note that \eqref{EquaConfineLocalB1} holds iff \eqref{EquaConfineLocalB} holds for all $p>1$

\bigskip

\textbf{Exemples}: \begin{itemize}
                   \item Let $\psi$ be a $\gamma$-Lipschitz function and $H f(x) = f(\psi(x))$. The sub-commutation \eqref{EquaConfineSubComm} is clear, and the local  inequality \eqref{EquaConfineLocalB} holds with $c=0$, since $H(x)$ is a Dirac mass.
                  
                   \item The sub-commutation is always satisfied with $\gamma = 0$ if $H(x) = \nu$ is a constant kernel, namely is a probability on $\Omega$, so that $\nu$ is confining iff it satisfies a Beckner's inequality.
                   \item if $N$ is a standard Gaussian vector on $\R^d$ and $(B_t)_{t\geq0}$ a Brownian motion on $\R^d$ then
                   \[K_t f(x) = \E \po f(x+B_t)\pf = \E \po f(x+\sqrt tN)\pf\]
                   is $(t,1)$-confined for the usual gradient and $p=1$ (see \cite[Chapter 1]{Logsob}). If the Brownian motion is replaced by an elliptic diffusion, a sub-commutation is given by its Bakry-Emery curvature (see \cite[Chapter 5]{Logsob}). 
                   \item Remark this definition could be extended to a Markov kernel $H:\Omega_1 \rightarrow \mathcal P\po \Omega_2\pf$ with $\Omega_1\subset\R^d$ and  $\Omega_2 \subset\R^n$. For instance if $\varphi$ is the flow associated to a vector fields $b$ on $\Omega_1$ then $Hf(t) = f\po\varphi_x(t)\pf$ is a Markov kernel from $\R_+$ to $\mathcal P(\Omega_1)$, and $\p_t Hf = H\po b^*\na f\pf$.
                  \end{itemize}
 
Here is maybe our most important, although very simple result:

\begin{lem}\label{LemConfine}
For $i=1,2$, let $H_i$ be a $(c_i,\gamma_i,p)$-confining Markov kernel on $\Omega$.
 \begin{enumerate}
                                \item  Then $H_1H_2$ is a $(c_2+\gamma_2 c_1, \gamma_1\gamma_2,p)$-confining Markov kernel.
                                \item  If  $\nu\in\mathcal P\po \Omega\pf$ satisfies $\mathcal B(p,c)$ then $\nu H_2$ satisfies $\mathcal B(p,c_2+\gamma_2 c)$.
                                \item Suppose $H$ is $(c,\gamma,p)$-contractive and the Markov chain generated by $H$ is ergodic in the sense there exists $\nu\in\mathcal P\po\Omega\pf$ such that for all $x\in \Omega$ and $f\in \A$, $H^n f(x)$ goes to $\nu f$ as  $n$ goes to infinity. Then the invariant law $\nu$ satisfies $\mathcal B\po p, c (1-\gamma)^{-1}\pf$.
                               \end{enumerate}
\end{lem}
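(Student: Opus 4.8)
The plan is to establish the three points in order: the first point is the engine, and the other two follow by specializing it and by iterating it.

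For the first point, I would first dispatch the sub-commutation \eqref{EquaConfineSubComm} for $H_1H_2$, which is pure chaining. Put $g=(H_2 f^{2/p})^{p/2}\in\A$, so that $g^{2/p}=H_2 f^{2/p}$ and hence $H_1H_2 f^{2/p}=H_1 g^{2/p}$; applying the sub-commutation of $H_1$ to $g$, then that of $H_2$ to $f$, and using that $H_1$ is a positive operator, gives
\[\left|\na\po H_1H_2 f^{2/p}\pf^{p/2}\right|^2 \leq \gamma_1 H_1|\na g|^2 \leq \gamma_1\gamma_2\, H_1H_2|\na f|^2.\]
The crux is the local Beckner inequality. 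Write $E_H(f)$ for the local $p$-entropy $\po Hf^2-(Hf^{2/p})^p\pf/(p-1)$, replaced by $H(f^2\ln f^2)-(Hf^2)\ln(Hf^2)$ when $p=1$, which for a probability measure $\nu$ in place of $H$ is exactly the left-hand side of $\mathcal B(p,\cdot)$. The key identity I would prove is the exact decomposition along a composition
\[E_{H_1H_2}(f) = H_1\po E_{H_2}(f)\pf + E_{H_1}(g),\qquad g=\po H_2 f^{2/p}\pf^{p/2}.\]
This falls out of writing $H_2 f^2=(p-1)E_{H_2}(f)+g^2$, applying $H_1$, and recognizing $E_{H_1}(g)$ in the remaining terms since $g^{2/p}=H_2 f^{2/p}$ (for $p=1$ it is the usual additivity of entropy). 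Granting the identity, the local Beckner inequality of $H_2$ bounds $H_1\po E_{H_2}(f)\pf$ by $c_2 H_1H_2|\na f|^2$, while the local Beckner inequality of $H_1$ applied to $g$, together with the sub-commutation of $H_2$, bounds $E_{H_1}(g)$ by $c_1 H_1|\na g|^2\leq \gamma_2 c_1\, H_1H_2|\na f|^2$; summing the two proves \eqref{EquaConfineLocalB} for $H_1H_2$ with constant $c_2+\gamma_2 c_1$.

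The second point is then immediate: take $H_1$ to be the constant kernel $x\mapsto\nu$, which is $(c,0,p)$-confining precisely when $\nu$ satisfies $\mathcal B(p,c)$ (see the Examples following Definition \ref{DefContractifEntropie}); then $H_1H_2$ is the constant kernel $x\mapsto\nu H_2$, which the first point makes $(c_2+\gamma_2 c,0,p)$-confining, and a constant kernel with those parameters is precisely a probability measure satisfying $\mathcal B(p,c_2+\gamma_2 c)$. For the third point, iterating the first point with $H_1=H$, $H_2=H^{n-1}$ gives that $H^n$ is $(c_n,\gamma^n,p)$-confining with $c_n=c+\gamma c_{n-1}$ and $c_1=c$, hence $c_n=c\frac{1-\gamma^n}{1-\gamma}\leq\frac{c}{1-\gamma}$ since $\gamma<1$. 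Writing the local Beckner inequality of $H^n$ at a fixed $x$ and letting $n\to\infty$, ergodicity yields $H^n f^2(x)\to\nu f^2$, $H^n f^{2/p}(x)\to\nu f^{2/p}$ and $H^n|\na f|^2(x)\to\nu|\na f|^2$, so by continuity of $t\mapsto t^p$ (resp. $t\mapsto t\ln t$ when $p=1$) the inequality passes to the limit as $E_\nu(f)\leq\frac{c}{1-\gamma}\nu|\na f|^2$, i.e. $\mathcal B\po p,c(1-\gamma)^{-1}\pf$ for $\nu$ (invariance of $\nu$ under $H$ being clear from $\nu Hf=\lim_n H^{n+1}f=\nu f$).

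The only substantive step is the entropy decomposition identity of the first point; everything else is bookkeeping. I would argue at the formal level the paper attaches to $\A$, handling the minor point that $f^{2/p}$ and $f^2\ln f^2$ need not lie in $\A$ when $f$ vanishes by the usual reductions (restrict to $f$ bounded below, then density).
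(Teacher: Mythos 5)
Your proposal is correct and follows essentially the same route as the paper: the exact entropy decomposition $E_{H_1H_2}(f)=H_1(E_{H_2}(f))+E_{H_1}(g)$ with $g=(H_2 f^{2/p})^{p/2}$ is precisely the algebraic rearrangement the paper performs (without naming it an identity), and points 2 and 3 are obtained in the same way, by specializing $H_1$ to the constant kernel $\nu$ and by iterating and passing to the limit using ergodicity.
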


\begin{proof}
  Let $p\in (1,2]$ (the case $p=1$ is similar and already treated in \cite{ChafaiMalrieuParoux}). First,
\[ \left|\na \po H_1H_2 f^\frac 2p\pf^\frac p 2\right|^2 \leq \gamma_1 H_1\po \left|\na\po H_2 f^\frac2p\pf^\frac p 2\right|^2\pf \leq \gamma_1\gamma_2 H_1H_2|\na f|^2\]
and 
 \begin{eqnarray*}
 \frac{H_1H_2 f^2 - \po H_1H_2 f^{\frac2p}\pf^p}{p-1}  & = & \frac{1}{p-1} \po H_1\co H_2 f^2 - \po H_2 f^\frac2p\pf^p\cf + H_1\po H_2 f^\frac2p\pf^p - \po H_1H_2 f^\frac2p\pf^p\pf\\
 & \leq & c_2 H_1H_2 |\na f|^2 + \frac{1}{p-1} \po H_1 g^2 - \po H_1 g^\frac2p\pf^p\pf\hspace{20pt}\text{ with }g =  \po H_2 f^\frac2p\pf^\frac p2\\
 & \leq & c_2 H_1H_2 |\na f|^2 + c_1 H_1 |\na g|^2\\
 & \leq & (c_2+ \gamma_2 c_1)H_1H_2 |\na f|^2.
\end{eqnarray*}
The second point is obtained from the first one by considering $H_1 = \nu$. Concerning the third assertion, by induction from the first one we get for all $n\in\mathbb N$
\begin{eqnarray*}
  \frac{H^n f^2 - \po H^n f^{\frac2p}\pf^p}{p-1} & \leq & c\po\sum_{k=0}^n \gamma^k\pf H^n |\na f|^2.
  \end{eqnarray*}
  The weak convergence of $H^n$ to $\nu$ concludes.
\end{proof}

\textbf{Example:} Let $(E_k)_{k\geq 0}$ be an i.i.d. sequence of standard exponential variables, and $(X_k)_{k\geq 0}$ be the Markov chain on $\R_+$ defined by $X_{k+1} = \frac{X_k + E_k}{2}$. Its transition operator is
\[Pf(x) \ = \ \E\po f\po \frac{x+E_0}{2}\pf\pf.\]
Clearly $(Pf)'(x) = \frac12 P(f')(x)$, so that $|\po Pf\pf'|^2 \leq \frac14 P|f'|^2$. On the other hand $P(x)$, the law of $\frac{x+E}{2}$, is the image by a $\frac12$-Lipschitz transformation of the exponential law $\mathcal E(1)$, which satisfies a Poincar\'e inequality $\mathcal B(2,4)$  (cf. Theorem \cite[Theorem 6.2.2]{Logsob} for instance). Thus $P$ is $(2,\frac14,2)$-contractive. On the other hand it is clear the chain is irreducible, it admits $C=[0,3]$ as a small set and $V(x) = x+1$ as a Lyapunov function (since $PV(x) \leq \frac34 V(x) + \mathbb 1_{x<3}$) so that it is ergodic (see \cite{MeynTweedieDown} for definitions and proof). According to Lemma \ref{LemConfine}, the invariant measure satisfies a Poincar\'e inequality $\mathcal B\po 2,\frac83\pf$.

This chain can be obtained from the TCP process with constant jump rate (Section \ref{ExampleTCPconstant} below) if the process is only observed when it jumps. This is the so-called embedded chain associated to the continuous process, which we now introduce in a general framework.

\subsection{The embedded chain}\label{SectionEmbedded}

Recall $X=(X_t)_{t\geq0}$ is a process on $\Omega$ with generator given by \eqref{DefinitionGenerateur}. Let $(S_k)_{k\geq 0}$ be the jump times of $X$ and let $Z_k=X_{S_k}$. The Markov chain $(Z_k)_{k\geq 0}$ is called the embedded chain associated to $X$. 

For $s\in[S_k,S_{k+1})$, $X_s = \varphi_{Z_k}(s-S_k)$ where we recall $\varphi_x$ is the flow associated to the vector field $b$. Since
\[\frac{d}{dt}\big( f\po \varphi_x(t)\pf\big) = (b^*\na f)\po \varphi_x(t)\pf,\]
we shall say that a function $f$ is non-decreasing (resp. constant, concave, etc.) along the flow if $t\mapsto f(\varphi_x(t))$ is non-decreasing (resp. constant, etc.) for all $x\in\Omega$; in other word if $b^*\na f \geq 0$ (resp. $=0$, etc.).

Conditionally to the event $Z_k=x$, the inter-jump time $T_k=S_{k+1}-S_k$ has a density 
\[p_x(t) = \lambda\po\varphi_x(t)\pf  e^{-\int_0^t \lambda \po\varphi_x(s)\pf ds}\]
 on $\R^+$. We assume the inter-jump times are $a.s.$ finite (which is clear if $\underset{t\rightarrow\infty}\liminf \lambda\po\varphi_x(t)\pf >0$ for all $x$), and define
\[K f(x) = \int_0^{+\infty} f\po\varphi_x(t)\pf p_x(t)dt = \E\po f(\varphi_x(T_k)) | Z_k = x\pf.\]
Then $P=KQ$ is the transition operator for the chain $Z$.

Transfering properties from $X$ to $Z$, or the converse, is far from obvious. In fact it is quite easy to find counter-examples for which one is ergodic and not the other (see examples 34.28 and 34.33 of \cite{Davis}). In \cite{Costa} this problem is solved with the definition of another embedded chain by adding observation points at constant rate. That being said, in the following we won't delve into this issue, and simply assume $Z$ has a unique invariant law $\mu_e$ (which can often be proved under conditions of irreducibility, aperiodicity and existence of a Lyapunov function). In this case we can express $\mu$ from $\mu_e$:

\begin{lem}[\textbf{Theorem 34.31 of \cite{Davis}, p.123}]\label{lemMueMu}
 Assume $C = \mu_e K\po\frac1\lambda\pf = \mu_e\co\int_0^\infty e^{-\int_0^t \lambda(\varphi_x(s))ds} dt \cf < \infty$. Then
 \[\mu f = C^{-1}\mu_e K\po\frac f\lambda\pf .\]
 In other words, $\mu = \nu_e \widetilde K$ where
 \begin{eqnarray*}
  \widetilde K f(x) & = & \frac{1}{K(\frac1\lambda)(x)} K \po \frac f\lambda\pf (x)\\
  \nu_e f & = & \frac{1}{C} \mu_e \co f K\po\frac1\lambda\pf\cf.
 \end{eqnarray*}
\end{lem}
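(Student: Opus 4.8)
The plan is to exhibit the probability measure $\tilde\mu$ defined by $\tilde\mu f = C^{-1}\mu_e K(f/\lambda)$ and to check directly that it is invariant for $L$; the assumed uniqueness of the invariant law of $X$ then forces $\tilde\mu=\mu$, and the formulas for $\widetilde K$ and $\nu_e$ in the statement are just a rewriting of $\tilde\mu = \nu_e\widetilde K$. Note first that the apparent division by $\lambda$ is harmless: expanding $K$, one has $K(f/\lambda)(x)=\int_0^\infty f(\varphi_x(t))e^{-\int_0^t\lambda(\varphi_x(s))ds}dt$, where the factor $\lambda$ in the density $p_x$ cancels. In particular $\tilde\mu 1 = C^{-1}\mu_e K(1/\lambda) = C^{-1}C = 1$, the finiteness being exactly the hypothesis $C<\infty$, so $\tilde\mu$ is a probability measure. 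It then suffices to show $\tilde\mu(Lf)=0$ for all $f\in\A$: at the formal level this gives $\p_t(\tilde\mu P_t f) = \tilde\mu(LP_t f)=0$, hence $\tilde\mu P_t=\tilde\mu$ for all $t$, and uniqueness concludes.

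The key step is an integration by parts along the flow. Fix $x$ and write $g(t)=f(\varphi_x(t))$ and $E(t)=e^{-\int_0^t\lambda(\varphi_x(s))ds}$, so that $g'(t)=(b^*\na f)(\varphi_x(t))$ and $E'(t)=-\lambda(\varphi_x(t))E(t)$. Then, the factor $\lambda$ again cancelling in $K(\cdot/\lambda)$,
\[ K\Big(\frac{b^*\na f}{\lambda}\Big)(x) = \int_0^\infty g'(t)E(t)\,dt = \big[g(t)E(t)\big]_0^\infty + \int_0^\infty g(t)\lambda(\varphi_x(t))E(t)\,dt = -f(x) + Kf(x), \]
the boundary term at infinity vanishing because non-explosivity forces $\int_0^\infty\lambda(\varphi_x(s))ds=\infty$, hence $E(t)\to0$, while $f\in\A$ grows at most polynomially along the flow. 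Combining this with the identity $Lf/\lambda = (b^*\na f)/\lambda + Qf - f$ and the definition $P=KQ$,
\[ \tilde\mu(Lf) = C^{-1}\mu_e K\Big(\frac{b^*\na f}{\lambda} + Qf - f\Big) = C^{-1}\mu_e\big(Kf - f + KQf - Kf\big) = C^{-1}\mu_e(Pf - f) = 0, \]
the last equality because $\mu_e$ is $P$-invariant. That is the whole argument.

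The main obstacle is not the algebra but the justification of the manipulations at the level of rigour used throughout the paper: that $K$ and $x\mapsto K(f/\lambda)(x)$ send $\A$ into functions that are $\mu_e$-integrable (so $\mu_e K(\cdot)$ is defined and Fubini is licit), that the boundary term $g(t)E(t)$ genuinely vanishes — which needs the lower bound on $\int_0^t\lambda(\varphi_x(s))ds$ to beat the polynomial growth of $f$, guaranteed here by the standing non-explosivity and moment assumptions — and that $P_t f\in\A$ so that the bootstrap $\p_t(\tilde\mu P_t f)=0$ is legitimate. An alternative, more probabilistic route avoids the invariance bootstrap: by the renewal–reward/ergodic theorem applied to the embedded chain, $\frac1n\sum_{k<n}\int_0^{T_k}f(\varphi_{Z_k}(s))\,ds \to \mu_e\big[x\mapsto\E_x\int_0^{T_0}f(\varphi_x(s))\,ds\big]$, and Fubini gives $\E_x\int_0^{T_0}f(\varphi_x(s))\,ds = \int_0^\infty f(\varphi_x(s))E(s)\,ds = K(f/\lambda)(x)$; dividing the expressions obtained for $f$ and for $1$ (the latter yielding $S_n/n\to C$) and using ergodicity of $X$ to identify the time-average of $f(X_s)$ with $\mu f$ produces $\mu f = C^{-1}\mu_e K(f/\lambda)$ directly, at the cost of the integrability hypotheses needed to apply the ergodic theorems.
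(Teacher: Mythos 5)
Your argument is correct, and it fills in something the paper leaves out: Lemma \ref{lemMueMu} is stated in the paper purely as a citation to Davis's monograph (Theorem 34.31), with no proof given. So there is no proof in the paper to compare against; you have supplied a self-contained one.

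The route you take is the natural one and the algebra checks out. Writing $\tilde\mu f = C^{-1}\mu_e K(f/\lambda)$, the observation that the $\lambda$ cancels in $K(\cdot/\lambda)$ gives $K(f/\lambda)(x)=\int_0^\infty f(\varphi_x(t))e^{-\Lambda_x(t)}dt$, so $\tilde\mu$ is a probability by $C<\infty$. The integration by parts along the flow yields
\[
K\!\left(\frac{b^*\nabla f}{\lambda}\right)(x) = \big[\,f(\varphi_x(t))e^{-\Lambda_x(t)}\,\big]_0^\infty + Kf(x) = Kf(x) - f(x),
\]
the boundary term at infinity vanishing because the paper's non-explosivity assumption gives $\Lambda_x(t)\to\infty$ while $f\in\A$ grows at most polynomially. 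Since $Lf/\lambda = (b^*\nabla f)/\lambda + Qf - f$, this telescopes to $K(Lf/\lambda) = Pf - f$, and $\mu_e P = \mu_e$ gives $\tilde\mu(Lf)=0$; the bootstrap $\partial_t(\tilde\mu P_t f) = \tilde\mu(LP_t f)=0$ and the assumed uniqueness of the invariant law close the argument, using that $P_t$ fixes $\A$ as the paper assumes. The rewriting $\tilde\mu=\nu_e\widetilde K$ is immediate from the definitions, as you say. Your flagging of the integrability points (that $K(f/\lambda)$ be $\mu_e$-integrable, that the boundary term genuinely vanish, that differentiation under the integral be licit) is the right instinct, and is consistent with the paper's explicitly stated policy of working formally on the class $\A$. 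The alternative renewal--reward/ergodic-theorem route you sketch is an equally valid way to obtain the formula and is the more probabilistic standard derivation; either proof would serve.
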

In the following we will always assume the condition $C<\infty$ holds, so that $\nu_e$ and $\widetilde K$ are well defined.

\bigskip 

Here is our plan: from Lemma \ref{LemConfine}, we may establish a Beckner's inequality for $\mu_e$ by proving the operator $P$ is contractive. By perturbative results on functional inequalities (see \cite{Chafai2004} or Appendix) this may give an inequality for $\nu_e$. Finally, again from Lemma \ref{LemConfine}, we may transfer the inequality from $\nu_e$ to $\mu$ by proving the operator $\widetilde K$ is confining.

The rest of this section will thus  enlight some general facts which will later help us (mostly in dimension 1) prove $K$ and $\widetilde K$ are confining. It is strongly inspired by the work of Chafaï, Malrieu and Paroux \cite{ChafaiMalrieuParoux}, in which a log-Sobolev inequality is proved for the invariant measure of the embedded chain of a particular PDMP, the TCP with linear rate (see Example~\ref{ExampleTCPLInear}).

\bigskip

Recall we assumed $\lambda >0$ almost everywhere, and on every fixed point of the flow, so that
\[t\mapsto \Lambda_x(t) := \int_0^t \lambda\po\varphi_x(u)\pf du\]
is invertible for all $x\in\Omega$. Moreover since we assumed the jump times are \emph{a.s.} finite, necessarily, for all $x\in\Omega$, $\Lambda_x(t) \rightarrow \infty$ as $t\rightarrow \infty$. 
Remark that
\begin{eqnarray}\label{equationLambda(phi)}
 \Lambda_{\varphi_x(s)}(t) & = &\Lambda_x(t+s) - \Lambda_x(s)
\end{eqnarray}
which yields both
\begin{eqnarray}\label{equation(p)(Lambda)}
 b(x)^*\nabla_x \po \Lambda_x(t)\pf& = &\left.\frac{d}{ds}\right|_{s=0}\po \Lambda_{\varphi_x(s)}(t)\pf \hspace{10pt}=\hspace{10pt} \lambda\po\varphi_x(t)\pf - \lambda(x)
\end{eqnarray}
and, taking $u =\Lambda_{\varphi_x(s)}(t)$ in $t+s = \Lambda_x^{-1}\po\Lambda_{\varphi_x(s)}(t)+\Lambda_x(s) \pf$,
\begin{eqnarray}\label{equationSurLambdamoinsUn}
\Lambda_{\varphi_x(s)}^{-1} \po u \pf & = & \Lambda_x^{-1}\po u+\Lambda_x(s)\pf -s.
\end{eqnarray}
 If $X_0 = x$ and if $T_x$ is the next time of jump then
 \[E=\int_0^{T_x} \lambda\po \phi_x(u)\pf du\]
  is independant from $X_0$, and has a standard exponential law. In other words $T_x \overset{dist}= \Lambda_x^{-1}(E)$, and
$T_{\varphi_x(t)} \overset{dist}= \Lambda_{\varphi_x(t)}^{-1}\po \Lambda_x(T_x)\pf$.

\begin{lem}\label{LemRestrictLipschitz}
If $\lambda$ is non-decreasing along the flow, then for all $x\in\Omega$ and $t>0$, the law of $T_{\varphi_x(t)}$ is the image of the law of $T_x$ by a 1-Lipschitz function.
\end{lem}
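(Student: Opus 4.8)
The plan is to exhibit explicitly the map sending the law of $T_x$ to the law of $T_{\varphi_x(t)}$ and to show it is $1$-Lipschitz. Recall from the discussion preceding the statement that $T_x \overset{dist}= \Lambda_x^{-1}(E)$ for a standard exponential variable $E$, and likewise $T_{\varphi_x(t)} \overset{dist}= \Lambda_{\varphi_x(t)}^{-1}\po\Lambda_x(T_x)\pf$, so the candidate map is
\[g_t : s \longmapsto \Lambda_{\varphi_x(t)}^{-1}\po \Lambda_x(s)\pf.\]
By \eqref{equationSurLambdamoinsUn} with the roles of $s$ and $t$ adjusted, this equals $\Lambda_x^{-1}\po\Lambda_x(s)+\Lambda_x(t)\pf - t$. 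So the first step is just to record that $g_t$ is the correct transport map (it is a genuine deterministic function of $s$, increasing, hence a valid ``image by a function'' statement), using the identities \eqref{equationLambda(phi)}--\eqref{equationSurLambdamoinsUn} already established.

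The second step is to differentiate $g_t$ and bound the derivative by $1$. Writing $h = \Lambda_x^{-1}$, we have $g_t(s) = h\po \Lambda_x(s) + \Lambda_x(t)\pf - t$, so
\[g_t'(s) = h'\po\Lambda_x(s)+\Lambda_x(t)\pf\,\Lambda_x'(s) = \frac{\Lambda_x'(s)}{\Lambda_x'\po h(\Lambda_x(s)+\Lambda_x(t))\pf},\]
using $h'(u) = 1/\Lambda_x'(h(u))$. Now $\Lambda_x'(u) = \lambda\po\varphi_x(u)\pf$, and $h\po\Lambda_x(s)+\Lambda_x(t)\pf = \Lambda_x^{-1}\po\Lambda_x(s)+\Lambda_x(t)\pf \geq s$ because $\Lambda_x$ is increasing and $\Lambda_x(t)\geq 0$. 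Hence
\[g_t'(s) = \frac{\lambda\po\varphi_x(s)\pf}{\lambda\po\varphi_x(h(\Lambda_x(s)+\Lambda_x(t)))\pf},\]
and since $\lambda$ is non-decreasing along the flow, $u\mapsto \lambda\po\varphi_x(u)\pf$ is non-decreasing; evaluating at $u=s$ in the numerator and at $u = h(\Lambda_x(s)+\Lambda_x(t))\geq s$ in the denominator gives $0 \le g_t'(s)\leq 1$. Therefore $g_t$ is $1$-Lipschitz (and non-decreasing), which is exactly the claim.

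The only genuine obstacle is bookkeeping: one must be careful that $\Lambda_x$ is a bona fide $\mathcal{C}^1$ increasing bijection of $\R_+$ onto $\R_+$ (guaranteed by the standing hypotheses $\lambda>0$ a.e.\ and on fixed points, plus $\Lambda_x(t)\to\infty$, both recalled just above the lemma), so that $\Lambda_x^{-1}$ is differentiable and the chain rule applies; and that the inequality $h\po\Lambda_x(s)+\Lambda_x(t)\pf \ge s$ is used in the right place. If one prefers to avoid differentiability altogether, the same conclusion follows directly from monotonicity: for $s_1<s_2$, applying $\Lambda_{\varphi_x(t)}$ to $g_t(s_2)-g_t(s_1)$ and using the superadditivity-type relation \eqref{equationLambda(phi)} together with monotonicity of $\lambda$ along the flow yields $g_t(s_2)-g_t(s_1)\le s_2-s_1$ without any smoothness assumption. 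Either route is short; I would present the derivative computation as the cleaner one.
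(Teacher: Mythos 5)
Your proof is correct and is essentially the paper's proof: same transport map $G(s)=\Lambda_{\varphi_x(t)}^{-1}(\Lambda_x(s))$, same differentiation giving $G'(s)=\lambda(\varphi_x(s))/\lambda(\varphi_x(t+G(s)))$, and the same use of \eqref{equationSurLambdamoinsUn} together with monotonicity of $\Lambda_x$ and of $\lambda$ along the flow to bound $G'$ by $1$. The only cosmetic difference is that you rewrite $G$ purely in terms of $\Lambda_x$ before differentiating, while the paper applies \eqref{equationSurLambdamoinsUn} after.
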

\begin{proof}
Let $x\in\Omega$ and $t>0$. For $s>0$ we note  $G(s) = \Lambda^{-1}_{\varphi_x(t)}\po \Lambda_x(s)\pf$, so that $T_{\varphi_x(t)} \overset{dist}= G(T_x)$. From $\frac{d}{du}\po\Lambda_x(u)\pf = \lambda\po\varphi_x(u)\pf$, we get
\begin{eqnarray*}
G'(s) & = & \frac{\lambda\po\varphi_x(s)\pf}{\lambda\po\varphi_{\varphi_x(t)}\po \Lambda^{-1}_{\varphi_x(t)}\po \Lambda_x(s)\pf\pf\pf}\\
& = & \frac{\lambda\po\varphi_x(s)\pf}{\lambda\po\varphi_{x}\po t+G(s)\pf\pf}
\end{eqnarray*}
From the relation \eqref{equationSurLambdamoinsUn} and the fact that $\Lambda_x$ (hence $\Lambda_x^{-1}$) is non-decreasing,
\[t + G(s)  =  \Lambda_x^{-1}\po \Lambda_x(s) + \Lambda_x(t) \pf \geq \Lambda_x^{-1}\po \Lambda_x(s)\pf = s.\]
Thus ${\lambda\po\varphi_x(s)\pf}\leq{\lambda\po\varphi_{x}\po t+G(s)\pf\pf}$ and $|G'(s)|\leq 1$.
\end{proof}
The assumption that the jump rate is non-decreasing along the flow is natural in several applications where the role of the jump mechanism is to counteract a deterministic trend (growth/fragmentation models for cells \cite{Bouguet}, TCP dynamics \cite{ChafaiMalrieuParoux}, etc.). In this context, the more the system is driven away by the flow, the more it is likely to jump. From a mathematical point of view, thanks to Lemma \ref{LemRestrictLipschitz}, a Beckner's inequality for the law $K(x)$ may be transfered to $K\po \varphi_x(t)\pf$ for all $t>0$.

In fact this is also true for $\widetilde K$. Let $\widetilde T_x$ be a random variable on $\R_+$ with density $\frac{e^{-\Lambda_x(t)}}{\int_0^\infty e^{-\Lambda_x(w)}dw}$, so that
\[\widetilde K f(x) = \E\co f\po\varphi_x\po\widetilde T_x\pf\pf\cf.\]

\begin{lem}\label{LemRestrictLipschitztilde}
If $\lambda$ is non-decreasing along the flow, then for all $x\in\Omega$ and $t>0$, the law of $\widetilde T_{\varphi_x(t)}$ is the image of the law of $\widetilde T_x$ by a 1-Lipschitz function.
\end{lem}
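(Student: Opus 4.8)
The plan is to follow the proof of Lemma~\ref{LemRestrictLipschitz}, transporting the law of $\widetilde T_x$ onto that of $\widetilde T_{\varphi_x(t)}$ by the monotone (quantile) coupling. Write $F_x$ for the distribution function of $\widetilde T_x$, that is $F_x(s) = \int_0^s e^{-\Lambda_x(w)}dw\,\big/\int_0^\infty e^{-\Lambda_x(w)}dw$, the denominator being finite since $\widetilde K$ is assumed well-defined. As $F_x$ is a $\mathcal{C}^1$ increasing bijection from $\R_+$ onto $[0,1)$ (its density $e^{-\Lambda_x}/\int_0^\infty e^{-\Lambda_x}$ being everywhere positive), the map $G = F_{\varphi_x(t)}^{-1}\circ F_x$ is a $\mathcal{C}^1$ increasing bijection of $\R_+$, and since $F_x(\widetilde T_x)$ is uniform on $[0,1)$ we get $\widetilde T_{\varphi_x(t)} \overset{dist}= G\po\widetilde T_x\pf$. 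The claim thus amounts to $0\leq G'\leq 1$, the left inequality being automatic.

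For the right one, $G'(s) = f_x(s)\big/f_{\varphi_x(t)}\po G(s)\pf$, where $f_x = F_x'$. Introducing the hazard rate $h_x(s) = f_x(s)/(1-F_x(s)) = e^{-\Lambda_x(s)}\big/\int_s^\infty e^{-\Lambda_x(w)}dw$ and using that $F_{\varphi_x(t)}\po G(s)\pf = F_x(s)$ by definition of $G$, the identity $f=h(1-F)$ yields the clean formula $G'(s) = h_x(s)\big/h_{\varphi_x(t)}\po G(s)\pf$, the exact analogue of the one obtained in Lemma~\ref{LemRestrictLipschitz} with $\lambda\circ\varphi_x$ replaced by $h_x$. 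Two facts remain to be checked. First, the hazard rate merely shifts along the flow, $h_{\varphi_x(t)}(w) = h_x(t+w)$: this is immediate from \eqref{equationLambda(phi)}, $\Lambda_{\varphi_x(t)}(w) = \Lambda_x(t+w) - \Lambda_x(t)$, after which the factor $e^{\Lambda_x(t)}$ cancels between numerator and denominator of $h_{\varphi_x(t)}(w)$ and the substitution $v\mapsto t+v$ concludes. Second, $h_x$ is non-decreasing: after the change of variable $w=s+u$,
\[ \frac1{h_x(s)} \ = \ \int_0^\infty \exp\po -\int_0^u \lambda\po\varphi_x(s+v)\pf dv\pf du, \]
and $v\mapsto \lambda\po\varphi_x(v)\pf$ being non-decreasing by hypothesis, the inner integral is non-decreasing in $s$ for each fixed $u$, so $1/h_x$ is non-increasing. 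This is the only place where the assumption is used (it says that $\widetilde T_x$ has an increasing failure rate).

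Combining the two facts, $G'(s) = h_x(s)\big/h_x\po t+G(s)\pf$, so $G'(s)\leq 1$ follows once $t+G(s)\geq s$ is known, and this holds with no monotonicity assumption. For $s\leq t$ it is trivial; for $s>t$, since $F_{\varphi_x(t)}$ is increasing and $F_{\varphi_x(t)}\po G(s)\pf = F_x(s)$, the inequality $G(s)\geq s-t$ is equivalent to $1-F_x(s)\leq 1-F_{\varphi_x(t)}(s-t)$, and rewriting, via \eqref{equationLambda(phi)}, $1-F_{\varphi_x(t)}(s-t) = \int_s^\infty e^{-\Lambda_x(w)}dw\,\big/\int_t^\infty e^{-\Lambda_x(w)}dw$, the claim reduces to the obvious $\int_t^\infty e^{-\Lambda_x(w)}dw\leq \int_0^\infty e^{-\Lambda_x(w)}dw$. (Equivalently, \eqref{equationLambda(phi)} shows $\widetilde T_{\varphi_x(t)}\overset{dist}= \po\widetilde T_x - t\mid \widetilde T_x\geq t\pf$, so $s\mapsto t+G(s)$ is the monotone transport of $\widetilde T_x$ onto a stochastically larger law.) The only --- minor --- obstacle is to reach the hazard-rate formula for $G'$ and to observe that the hypothesis is exactly what makes $h_x$ non-decreasing; everything else is bookkeeping with \eqref{equationLambda(phi)}.
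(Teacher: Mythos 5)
Your proof is correct and follows essentially the same strategy as the paper: both reduce the claim to the observation that the hazard (failure) rate of $\widetilde T_x$ is non-decreasing whenever $\lambda$ is non-decreasing along the flow, after which the Lipschitz bound on the monotone coupling follows as in Lemma~\ref{LemRestrictLipschitz}. The paper presents $\widetilde T_x$ as the jump time of an auxiliary Markov process with this non-decreasing rate and invokes Lemma~\ref{LemRestrictLipschitz} as a black box, proving the rate's monotonicity via the convexity inequality $r\geq V'$; you instead re-derive the $G'(s)=h_x(s)/h_x(t+G(s))$ formula directly and prove the monotonicity of $h_x$ by the transparent change of variables $1/h_x(s)=\int_0^\infty e^{-\int_0^u\lambda(\varphi_x(s+v))dv}\,du$.
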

\begin{proof}
We will prove Lemma \ref{LemRestrictLipschitz} applies here. Indeed the law of $\widetilde T_{\varphi_x(t)}$ is the law of $\widetilde T_x-t$ conditionnaly to the event $\widetilde T_x>t$, exactly as the law of $ T_{\varphi_x(t)}$ is the law of $ T_x-t$ conditionnaly to the event $ T_x>t$. We need to find a jump rate which define $\widetilde T_x$ as the jump time of a Markov process.

 Let $e^{-V(s)}ds$ be a positive probability density on $\R_+$, assume $V$ is convex and let
\begin{eqnarray*}
r(t) & = & \frac{e^{-V(t)}}{\int_t^\infty e^{-V(s)}ds}.
\end{eqnarray*}
Note that $r(t) = \frac{d}{dt}\po - \ln \int_t^\infty e^{-V(s)}ds\pf$, so that
\begin{eqnarray*}
e^{-\int_0^t r(s)ds} & = & \int_t^\infty e^{-V(s)}ds.
\end{eqnarray*}
Differentiating this equality yields
\begin{eqnarray*}
r(t)e^{-\int_0^t r(s)ds} & = & e^{-V(t)}.
\end{eqnarray*}
We want to prove $r$ is non-decreasing. From the convexity of $V$,
\[r(t) = \frac{e^{-V(t)}}{\int_t^\infty e^{-V(s)}ds} = \frac{\int_t^\infty V'(s)e^{-V(s)}ds}{\int_t^\infty e^{-V(s)}ds} \geq V'(t).\]
As a consequence,
\[r'(t) = r(t)\po r(t) - V'(t)\pf \geq 0.\]
In the case of $\widetilde T_x$, if $\lambda$ is non-decreasing along the flow then $V(t)= \Lambda_x(t)-\ln\int_0^\infty e^{-\Lambda_x(w)}dw$ is convex, so that the corresponding $r$ is non-decreasing and Lemma \ref{LemRestrictLipschitz} applies. 
\end{proof}

\begin{lem}\label{LemDeriveK}
For all $f\in\A$, $x\in\Omega$,
\begin{eqnarray*}
b(x)^*\nabla\po K f\pf(x) & = & \lambda(x)K\po \frac{b^*\nabla f}{\lambda}\pf(x).
\end{eqnarray*} 
In particular if $\lambda$ is non-decreasing along the flow, $|b^*\nabla\po K f\pf| \leq K|b^*\nabla f|$.
\end{lem}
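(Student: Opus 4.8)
The plan is to compute $b(x)^*\nabla (Kf)(x)$ by differentiating $Kf(x) = \E\po f(\varphi_x(T_x))\pf$ along the flow, i.e. to evaluate $\left.\frac{d}{ds}\right|_{s=0}(Kf)(\varphi_x(s))$, and recognize the result. The crucial observation (already used in the excerpt, see equation \eqref{equationLambda(phi)} and the distributional identity $T_{\varphi_x(s)} \overset{dist}= \Lambda_{\varphi_x(s)}^{-1}\po\Lambda_x(T_x)\pf$) is that $\varphi_{\varphi_x(s)}(T_{\varphi_x(s)})$ can be realized, on the same probability space, as $\varphi_x\po s + G_s(T_x)\pf$ where $G_s(t) = \Lambda_{\varphi_x(s)}^{-1}\po\Lambda_x(t)\pf$, by the flow property $\varphi_{\varphi_x(s)}(u) = \varphi_x(s+u)$. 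So
\[(Kf)(\varphi_x(s)) = \E\co f\po \varphi_x\po s+G_s(T_x)\pf\pf\cf = \int_0^\infty f\po\varphi_x(s+G_s(t))\pf p_x(t)\,dt,\]
using that $T_x$ has density $p_x$.

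First I would compute $\left.\frac{\partial}{\partial s}\right|_{s=0} G_s(t)$. Since $G_0(t)=t$, I differentiate $\Lambda_{\varphi_x(s)}\po G_s(t)\pf = \Lambda_x(t)$ with respect to $s$ at $s=0$. Using \eqref{equation(p)(Lambda)}, namely $\left.\frac{d}{ds}\right|_{s=0}\Lambda_{\varphi_x(s)}(u) = \lambda(\varphi_x(u)) - \lambda(x)$, together with $\frac{d}{du}\Lambda_x(u) = \lambda(\varphi_x(u))$, the chain rule gives
\[\po\lambda(\varphi_x(t)) - \lambda(x)\pf + \lambda(\varphi_x(t))\,\partial_s G_s(t)\big|_{s=0} = 0,\]
hence $\partial_s G_s(t)\big|_{s=0} = \dfrac{\lambda(x) - \lambda(\varphi_x(t))}{\lambda(\varphi_x(t))} = \dfrac{\lambda(x)}{\lambda(\varphi_x(t))} - 1$. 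Therefore $\left.\frac{\partial}{\partial s}\right|_{s=0}\po s + G_s(t)\pf = \dfrac{\lambda(x)}{\lambda(\varphi_x(t))}$. Differentiating under the integral sign (licit since everything is in $\A$ with polynomial growth and $p_x$ has exponential tails),
\[b(x)^*\nabla(Kf)(x) = \int_0^\infty (b^*\nabla f)\po\varphi_x(t)\pf \frac{\lambda(x)}{\lambda(\varphi_x(t))}\, p_x(t)\,dt = \lambda(x)\, K\po\frac{b^*\nabla f}{\lambda}\pf(x),\]
which is the claimed identity.

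For the final assertion: if $\lambda$ is non-decreasing along the flow then $\lambda(x) = \lambda(\varphi_x(0)) \leq \lambda(\varphi_x(t))$ for $t\geq 0$, so $\frac{\lambda(x)}{\lambda(\varphi_x(t))}\leq 1$ pointwise, and
\[|b(x)^*\nabla(Kf)(x)| \leq \int_0^\infty |b^*\nabla f|\po\varphi_x(t)\pf \frac{\lambda(x)}{\lambda(\varphi_x(t))}\, p_x(t)\,dt \leq \int_0^\infty |b^*\nabla f|\po\varphi_x(t)\pf p_x(t)\,dt = K|b^*\nabla f|(x).\]
The main obstacle is really just setting up the coupling $\varphi_{\varphi_x(s)}(T_{\varphi_x(s)}) = \varphi_x(s+G_s(T_x))$ correctly and justifying the interchange of $\frac{d}{ds}$ and $\E$; once the $s$-derivative of $s\mapsto s+G_s(t)$ is identified as $\lambda(x)/\lambda(\varphi_x(t))$, the rest is immediate. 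An alternative, more computational route avoiding the coupling would be to write $Kf(x) = \int_0^\infty f(\varphi_x(t))\lambda(\varphi_x(t))e^{-\Lambda_x(t)}dt$ directly, apply $b(x)^*\nabla_x$ under the integral using $b(x)^*\nabla_x(\varphi_x(t)) = b(\varphi_x(t))$ (differentiating the flow identity $\varphi_{\varphi_x(s)}(t)=\varphi_x(s+t)$) and $b(x)^*\nabla_x\Lambda_x(t) = \lambda(\varphi_x(t))-\lambda(x)$ from \eqref{equation(p)(Lambda)}, then integrate by parts in $t$ to recombine the terms into $\lambda(x)K\po\frac{b^*\nabla f}{\lambda}\pf(x)$; this is a bit more calculation but uses only identities already in the text.
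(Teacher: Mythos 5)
Your proposal is correct and takes essentially the same approach as the paper: both exploit the flow property $\varphi_{\varphi_x(s)}(u)=\varphi_x(s+u)$ together with the coupling of jump times via $\Lambda_x^{-1}$ applied to a fixed standard exponential, then differentiate under the integral to produce the factor $\lambda(x)/\lambda(\varphi_x(T_x))$. The only cosmetic difference is that the paper uses relation \eqref{equationSurLambdamoinsUn} to rewrite $s+\Lambda_{\varphi_x(s)}^{-1}(E)=\Lambda_x^{-1}(E+\Lambda_x(s))$ before differentiating, whereas you obtain $\partial_s G_s(t)|_{s=0}$ by implicit differentiation; the two computations are equivalent.
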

\begin{proof}
From the representation
\[K f(x) = \E\po f\po \varphi_x(T_x)\pf\pf = \E\po f\po \varphi_x\po\Lambda_x^{-1}(E) \pf\pf\pf,\]
we compute (recall $f\in\A$ is smooth and compactly supported)
\begin{eqnarray*}
  b(x)^*\na\po K f\pf(x) & = & \left.\frac{d}{ds}\right|_{s=0} \po K f\po \varphi_x(s)\pf\pf \\
 & =& \E \po \left.\frac{d}{ds}\right|_{s=0}f\co\varphi_{\varphi_x(s)}\po\Lambda_{\varphi_x(s)}^{-1} \po E \pf\pf\cf\pf\\
  & =& \E \po \left.\frac{d}{ds}\right|_{s=0}f\co\varphi_{x}\po s+\Lambda_{\varphi_x(s)}^{-1} \po E \pf\pf\cf\pf\\
 & = &  \E \po \left.\frac{d}{ds}\right|_{s=0}f\co\varphi_x\po\Lambda_x^{-1}\po E+\Lambda_x(s)\pf\pf\cf\pf\hspace{15pt}\text{(from Relation \eqref{equationSurLambdamoinsUn})}\\
 & = & \E \po\Lambda_x'(0)\po\Lambda_x^{-1}\pf' \po E \pf (b^*\nabla f)\co\varphi_x\po\Lambda_x^{-1} \po E \pf\pf\cf\pf\\
 & = & \E \po\frac{\lambda(x)}{\lambda\po\varphi_x\po\Lambda_x^{-1} \po E \pf\pf\pf} (b^*\nabla f)\co\varphi_x\po\Lambda_x^{-1} \po E \pf\pf\cf\pf.
\end{eqnarray*}
If $\lambda$ is non-decreasing along the flow, $\lambda\po\varphi_x(t)\pf \geq \lambda(x)$ for all $t\geq 0$.
\end{proof}

\begin{lem}\label{lemKtilde}
Let $h(x) = \int_0^\infty e^{-\Lambda_x(u)}du$. Then for all $f\in\A$, $x\in\Omega$,
\[b(x)^*\na\po \widetilde K f\pf(x) = \frac{\widetilde K(h b^*\na f)(x)}{h(x)}.\]
In particular if $\lambda$ is non-decreasing along the flow $|b^*\na \widetilde K f|(x) \leq \widetilde K|b^*\na f|(x)$.
\end{lem}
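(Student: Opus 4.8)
The plan is to proceed exactly as in the proof of Lemma \ref{LemDeriveK}, replacing the exponential weight $p_x(t)dt$ by the normalized weight $\frac{e^{-\Lambda_x(t)}}{h(x)}dt$ that defines $\widetilde T_x$, and then exploit the translation identity \eqref{equationSurLambdamoinsUn} relating the flow composed with $\Lambda^{-1}$ at different base points. First I would write
\[
\widetilde K f(x) \ = \ \frac{1}{h(x)}\int_0^\infty f\po\varphi_x(u)\pf e^{-\Lambda_x(u)}du,
\]
differentiate along the flow, i.e. compute $\left.\frac{d}{ds}\right|_{s=0}\widetilde K f\po\varphi_x(s)\pf$. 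Since $h$ appears as a prefactor, the derivative splits into two terms: one where $\frac{d}{ds}$ hits $\frac{1}{h(\varphi_x(s))}$, producing $-\frac{b^*\na h(x)}{h(x)}\widetilde K f(x)$, and one where it hits the integral. For the latter I would either substitute $u = \widetilde T_{\varphi_x(s)}$-variable and use \eqref{equationSurLambdamoinsUn} to rewrite $\varphi_{\varphi_x(s)}(\cdot)$ in terms of $\varphi_x$, exactly as in Lemma \ref{LemDeriveK}; the cleanest route is probably to note that the law of $\widetilde T_{\varphi_x(t)}$ is the law of $\widetilde T_x - t$ conditioned on $\{\widetilde T_x > t\}$ (as already observed in the proof of Lemma \ref{LemRestrictLipschitztilde}), which gives a direct probabilistic expression for $\widetilde K f\po\varphi_x(s)\pf$ ready to differentiate in $s$.

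Concretely, using that conditioning description,
\[
\widetilde K f\po\varphi_x(s)\pf \ = \ \frac{1}{\int_s^\infty e^{-\Lambda_x(w)}dw}\int_s^\infty f\po\varphi_x(w)\pf e^{-\Lambda_x(w)}dw,
\]
since $\Lambda_{\varphi_x(s)}(u) = \Lambda_x(u+s)-\Lambda_x(s)$ by \eqref{equationLambda(phi)} and the change of variables $w = u+s$ turns $e^{-\Lambda_{\varphi_x(s)}(u)}du$ into $e^{\Lambda_x(s)}e^{-\Lambda_x(w)}dw$, with the normalizing constant transforming accordingly. Differentiating this quotient at $s=0$ is now a one-line computation: the derivative of the numerator is $-f(x)e^{-\Lambda_x(0)}+\int_0^\infty (b^*\na f)\po\varphi_x(w)\pf e^{-\Lambda_x(w)}dw$ (the boundary term coming from the lower limit), the derivative of the denominator is $-e^{-\Lambda_x(0)} = -1$, and assembling the quotient-rule expression and recognizing $h(x)$, $\widetilde K$, and $\widetilde K(h\,b^*\na f)$ gives
\[
b(x)^*\na\po\widetilde K f\pf(x) \ = \ \frac{1}{h(x)}\int_0^\infty (b^*\na f)\po\varphi_x(w)\pf e^{-\Lambda_x(w)}dw \ + \ \big(\text{terms that cancel}\big).
\]
One must check the cancellation carefully — the $f(x)$ boundary term from the numerator and the $\widetilde K f(x)$ term from the denominator derivative should combine to zero — but this is bookkeeping. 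Finally, to identify the right-hand side as $\frac{\widetilde K(h\,b^*\na f)(x)}{h(x)}$ one uses that $\widetilde K g(x) = \frac{1}{h(x)}\int_0^\infty g\po\varphi_x(w)\pf e^{-\Lambda_x(w)}dw$, so $\widetilde K(h\,b^*\na f)(x) = \frac{1}{h(x)}\int_0^\infty h\po\varphi_x(w)\pf(b^*\na f)\po\varphi_x(w)\pf e^{-\Lambda_x(w)}dw$; matching this against the integral above forces the identity $h\po\varphi_x(w)\pf e^{-\Lambda_x(w)} = \big(\text{const in }w\big)\cdot(\text{something})$, which indeed holds because $h\po\varphi_x(w)\pf = \int_0^\infty e^{-\Lambda_{\varphi_x(w)}(u)}du = e^{\Lambda_x(w)}\int_w^\infty e^{-\Lambda_x(v)}dv$ by \eqref{equationLambda(phi)}. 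This is the one genuinely delicate point, and I expect it to be the main obstacle: getting the normalizations and the $h$-prefactor to line up so that the formula collapses exactly to $\widetilde K(h\,b^*\na f)/h$ rather than to that plus spurious remainder terms.

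For the last assertion, if $\lambda$ is non-decreasing along the flow then $h$ is non-increasing along the flow, hence $h\po\varphi_x(w)\pf \leq h(x)$ for $w\geq 0$; combined with $|b^*\na f|\geq \pm b^*\na f$ and positivity of the kernel $\widetilde K$ this gives
\[
|b^*\na\widetilde K f|(x) \ = \ \frac{|\widetilde K(h\,b^*\na f)(x)|}{h(x)} \ \leq \ \frac{\widetilde K\po h\,|b^*\na f|\pf(x)}{h(x)} \ \leq \ \widetilde K|b^*\na f|(x),
\]
where the last step uses $h\po\varphi_x(w)\pf/h(x)\leq 1$ inside the integral defining $\widetilde K$. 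I would state the monotonicity of $h$ along the flow explicitly (it follows from $b^*\na h(x) = \lambda(x)h(x) - 1$, which drops out of the computation above, together with $\lambda(x)h(x)\leq 1$ — itself a consequence of $\lambda$ being non-decreasing along the flow, since then $e^{-\Lambda_x(u)}\leq e^{-\lambda(x)u}$).
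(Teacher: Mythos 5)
Your overall strategy — rewriting $\widetilde K f\po\varphi_x(s)\pf$ as the ratio $\int_s^\infty f\po\varphi_x(w)\pf e^{-\Lambda_x(w)}dw \big/ \int_s^\infty e^{-\Lambda_x(w)}dw$ after the change of variables $w=u+s$, then differentiating at $s=0$ — is sound and genuinely different from the paper's proof, which works through the quantile function $F_x^{-1}(U)$ and the identity $1+b^*\na_x F_x^{-1}(u) = h\po\varphi_x(F_x^{-1}(u))\pf/h(x)$. Your route avoids the inverse cumulative function entirely, which is a nice simplification. However, the computation as written has two linked problems, and the second one is exactly the step you set aside as "bookkeeping."

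First, the numerator derivative. After the change of variables the integrand $f\po\varphi_x(w)\pf e^{-\Lambda_x(w)}$ is independent of $s$; only the lower limit depends on $s$. Hence $\frac{d}{ds}\big|_{s=0}\int_s^\infty f\po\varphi_x(w)\pf e^{-\Lambda_x(w)}dw = -f(x)$, with no additional integral term. Your claimed extra term $\int_0^\infty (b^*\na f)\po\varphi_x(w)\pf e^{-\Lambda_x(w)}dw$ does not appear, and consequently the "main term" you try to match against $\widetilde K(h\,b^*\na f)/h$ is not present either: the quotient rule gives simply
\[
b(x)^*\na\po\widetilde K f\pf(x) \ = \ \frac{\widetilde K f(x) - f(x)}{h(x)}.
\]
Second, you still must identify this with $\widetilde K(h\,b^*\na f)(x)/h(x)$, and that is not a cancellation of boundary terms but an integration by parts — which your sketch never performs. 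Using $(b^*\na f)\po\varphi_x(w)\pf = \frac{d}{dw}f\po\varphi_x(w)\pf$ and the identity you quote, $h\po\varphi_x(w)\pf e^{-\Lambda_x(w)} = \int_w^\infty e^{-\Lambda_x(v)}dv$, one writes
\[
\widetilde K(h\,b^*\na f)(x) \ = \ \frac{1}{h(x)}\int_0^\infty \frac{d}{dw}\co f\po\varphi_x(w)\pf\cf \po\int_w^\infty e^{-\Lambda_x(v)}dv\pf dw
\ = \ \frac{-f(x)h(x) + h(x)\widetilde K f(x)}{h(x)},
\]
after integrating by parts (the boundary term at $w=0$ gives $-f(x)h(x)$, the one at $\infty$ vanishes), and dividing by $h(x)$ recovers the quotient-rule expression. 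This is the genuine crux of your approach, not bookkeeping, and without it the proof is incomplete. The remainder of your argument — the "in particular" part using $h$ non-increasing along the flow and $\lambda(x)h(x)\le 1$ — is correct once the identity is established.
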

\begin{proof}
\[\widetilde K f(x) = \E\co f\po\varphi_x\po\widetilde T_x\pf\pf\cf.\]
 Note that $F_x(t) = \int_0^t \frac{e^{-\Lambda_x(s)}}{\int_0^\infty e^{-\Lambda_x(w)}dw}ds$ the cumulative function of $\widetilde T_x$ is invertible. Let $U$ be a uniform random variable on $[0,1]$. Then
 \begin{eqnarray*}
  \widetilde K f(x)& =& \E\co f\po\varphi_x\po F_x^{-1}(U)\pf\pf\cf\\
  \Rightarrow\hspace{10pt} b(x)^*\nabla\widetilde K f(x) & =& \E\co \left.\frac{d}{ds}\right|_{s=0} f\po\varphi_x\po s+ F_{\varphi_x(s)}^{-1}(U)\pf\pf\cf\\
  & = & \E\co \po 1+ b(x)^*\nabla_x \po F_{x}^{-1}(U)\pf\pf (b^*\nabla f)\po \varphi_x\po F_{x}^{-1}(U)\pf\pf\cf,
 \end{eqnarray*}
 If $u\in[0,1]$, from $\nabla_x \po F_x \po F_x^{-1} (u) \pf\pf = \nabla_x  (u) =0$ we get
 \begin{eqnarray}\label{eqFx-1}
  b(x)^*\na_x \po F_x^{-1}(u) \pf & = & \frac{-b(x)^*\na_x(F_x)\po F_x^{-1}(u)\pf}{F_x'\po F_x^{-1}(u)\pf}.
 \end{eqnarray}
On the first hand $F_x'(t) = \frac{e^{-\Lambda_x(t)}}{\int_0^\infty e^{-\Lambda_x(w)}dw}$. On the other hand from Equality \eqref{equation(p)(Lambda)} we compute
\begin{eqnarray*}
 b(x)^*\na_x(F_x)(t) & =& \frac{\int_0^t\po\lambda(x)-\lambda\po\phi_x(s)\pf\pf e^{-\Lambda_x(s)}ds}{\int_0^\infty e^{-\Lambda_x(w)}dw} + F_x(t) \frac{\int_0^\infty\po\lambda\po\phi_x(w)\pf-\lambda(x)\pf e^{-\Lambda_x(w)}dw}{\int_0^\infty e^{-\Lambda_x(w)}dw}\\
 & = & \lambda(x) F_x(t) + \frac{\left[e^{-\Lambda_x(s)}\right]^t_0}{\int_0^\infty e^{-\Lambda_x(w)}dw}-\lambda(x) F_x(t) -F_x(t)\frac{\left[e^{-\Lambda_x(\omega)}\right]^\infty_0}{\int_0^\infty e^{-\Lambda_x(w)}dw}\\
 & = & \frac{-1 + e^{-\Lambda_x(t)}+F_x(t)}{\int_0^\infty e^{-\Lambda_x(w)}dw}.
\end{eqnarray*}
Relation \eqref{eqFx-1} yields
\begin{eqnarray*}
1+ b(x)^*\na_x \po F_x^{-1}(u) \pf & = & 1 -\frac{-1 + e^{-\Lambda_x\po F_x^{-1}(u) \pf}+F_x\po F_x^{-1}(u) \pf}{e^{-\Lambda_x\po F_x^{-1}(u) \pf}}\\
& = & e^{\Lambda_x\po t \pf}(1-F_x\po t\pf).
\end{eqnarray*}
with $t = F_x^{-1}(u)$. Thanks to Equation \eqref{equationLambda(phi)},
\begin{eqnarray*}
 e^{\Lambda_x\po t \pf}(1-F_x\po t\pf) & = & e^{\Lambda_x\po t \pf}\int_t^\infty \frac{e^{-\Lambda_x(s)}}{\int_0^\infty e^{-\Lambda_x(v)}dv}ds\\
 & = & \frac{{\int_0^\infty e^{-\Lambda_x(w+t) + \Lambda_x(t)}dw}}{\int_0^\infty e^{-\Lambda_x(v)}dv}\\
 & = & \frac{{\int_0^\infty e^{-\Lambda_{\varphi_x(t)}(w)}dw}}{\int_0^\infty e^{-\Lambda_x(v)}dv}.
\end{eqnarray*}
Bringing the pieces together, we have proved
\begin{eqnarray*}
b(x)^*\nabla\po\widetilde K f\pf(x)  & = & \E\co \frac{h\po\varphi_x\po F_x^{-1}(U)\pf\pf}{h(x)}(b^*\nabla f)\po \varphi_x\po F_{x}^{-1}(U)\pf\pf\cf\\
& = & \frac{\widetilde K\po h b^*\na f\pf(x)}{h(x)}
\end{eqnarray*}
When $\lambda$ is non-decreasing along the flow, from \eqref{equation(p)(Lambda)}, $x\mapsto \Lambda_x(t)$ is non-decreasing along the flow for all $t\geq0$, and $h\po\varphi_x(t)\pf \leq h(x)$.
\end{proof}

\section{Examples}\label{SectionExampleVariance}


\subsection{The TCP with constant rate}\label{ExampleTCPconstant}

A simple yet instructive example on $\R_+$  is the TCP with constant rate of jump with generator
\[Lf(x) = f'(x) + \lambda \po \E\po f(Rx)\pf - f(x) \pf\]
where $R$ is a random variable on $[0,1)$ and $\lambda>0$ is constant. It is a simple growth/fragmentation model, or may be obtained by renormalizing a pure fragmentation model (cf. \cite{GabrielSalvarani} for instance). In \cite{LopkerVL,Last}, ergodicity is proved and it is shown the moments of the invariant measure $\mu$ are all finite.

Applying Theorem \ref{ThmConvergencePonctuelle} with $J_b = 0$, $M = \E\po R^2\pf$ and $a=1$, we get
\begin{prop}\label{TCPtauxconstantGradient}
for all $f\in\A$,
\[ |(P_t f)'|^2 \leq e^{-\lambda \po 1-\E\po R^2\pf\pf t }P_t |f'|^2. \]
\end{prop}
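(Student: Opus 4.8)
The plan is to derive Proposition~\ref{TCPtauxconstantGradient} as a direct application of Theorem~\ref{ThmConvergencePonctuelle} (equivalently Theorem~\ref{ThmConvergencePonctuellePasPoids}) with the trivial weight $a=1$. First I would identify the data of the model: here $\Omega=\R_+$, the vector field is $b(x)=1$ (the deterministic flow is $\varphi_x(t)=x+t$), the jump rate $\lambda$ is constant, and the transition kernel is $Qf(x)=\E\po f(Rx)\pf$ for a random variable $R$ valued in $[0,1)$ independent of everything else. Since $d=1$, the Jacobian $J_b$ is the scalar $b'(x)=0$, so $u^*J_b(x)u=0$ for all $u$.

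Next I would verify the sub-commutation hypothesis $|\na Qf(x)|^2\le M(x)\,Q|\na f|^2(x)$. Differentiating $Qf(x)=\E\po f(Rx)\pf$ gives $(Qf)'(x)=\E\po R\,f'(Rx)\pf$, and by Cauchy--Schwarz (or Jensen) applied to the expectation over $R$,
\[
|(Qf)'(x)|^2 \;=\; \big|\E\po R\,f'(Rx)\pf\big|^2 \;\le\; \E\po R^2\pf\,\E\po |f'(Rx)|^2\pf \;=\; \E\po R^2\pf\,Q|f'|^2(x),
\]
so the hypothesis holds with the constant function $M(x)=\E\po R^2\pf$. I would then check the balance condition~\eqref{EqBalanceTauxConstant}: with $a=1$, $J_b=0$ and $M=\E\po R^2\pf$, the left-hand side is $\lambda\po\E\po R^2\pf-1\pf|u|^2$, which equals $-\eta|u|^2$ for $\eta=\lambda\po 1-\E\po R^2\pf\pf$; since $R\in[0,1)$ we have $\E\po R^2\pf<1$ and hence $\eta>0$. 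Theorem~\ref{ThmConvergencePonctuellePasPoids} then yields $|\na P_t f|^2(x)\le e^{-\eta t}P_t|\na f|^2(x)$, which in dimension one is exactly $|(P_tf)'|^2\le e^{-\lambda(1-\E(R^2))t}P_t|f'|^2$.

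There is essentially no obstacle here: the only mild point to keep in mind is that the bound $M=\E\po R^2\pf$ should be understood as allowing strict inequality (it is automatically $<1$ when $\P(R=0)$ is not too degenerate, but even if $R$ can be arbitrarily close to $1$ we still get $\E\po R^2\pf<1$ as long as $\P(R<1)=1$), and that the test functions $f\in\A$ on $\Omega=\R_+$ together with the flow $\varphi_x(t)=x+t$ fix $\A$, so all the manipulations above are licit in the framework fixed in the introduction. One should also note, as the paper remarks after Theorem~\ref{ThmConvergencePonctuelle}, that neither the existence nor the uniqueness of the invariant measure $\mu$ is needed for this pointwise gradient estimate, so Proposition~\ref{TCPtauxconstantGradient} is a clean consequence requiring only the above two-line verification.
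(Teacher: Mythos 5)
Your proof is correct and follows exactly the paper's route: the paper obtains the proposition by applying Theorem~\ref{ThmConvergencePonctuelle} with $J_b=0$, $M=\E\po R^2\pf$ and $a=1$, and your verification of the sub-commutation via Cauchy--Schwarz together with the balance condition giving $\eta=\lambda\po 1-\E\po R^2\pf\pf$ matches this. Nothing to add.
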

Corollary \ref{ThmCVWasserstein} then yields a contraction at rate $\lambda \po 1-\E\po R^2\pf\pf$ of the Wasserstein distance $\mathcal W_{1}(\nu_1P_t,\nu_2P_t)$. In fact by coupling two processes starting at different points to have the same jump times and the same factor $R$ at each jump, one get that for any $p\geq 1$, the $\mathcal W_{p}$ distance decays at rate $\lambda p^{-1} \po 1-\E\po R^p\pf\pf$ (see \cite{ChafaiMalrieuParoux}), and those rates are optimal (see \cite{MalrieuPDMP}). In particular $\lambda \po 1-\E\po R^2\pf\pf$ is the rate of decay of $\mathcal W_{2}^2$. 

\bigskip

Let
\[K f = \int_0^\infty f(x+s) \lambda e^{-\lambda s} ds.\]
Obviously $(Kf)' = K(f')$. Moreover the exponential law $\mathcal E(1)$ satisfies a Poincar\'e inequality $\mathcal B(2,4)$, so that by the change of variable $z \mapsto z/\lambda$,  $\mathcal E(\lambda)$ satisfies  $\mathcal B(2,4\lambda^{-2})$. Finally, the law $K(x)$ is the image of $\mathcal E(\lambda)$  by the translation $u\mapsto u +x$, which is a 1-Lipschitz transformation. As a conclusion,
\begin{lem}
  The operator $K$ is $(4\lambda^{-2},1,2)$-confining.
\end{lem}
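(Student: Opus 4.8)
The plan is to verify the two conditions in Definition \ref{DefContractifEntropie} for the kernel $K$ with $p=2$, $\gamma = 1$ and $c = 4\lambda^{-2}$. Recall that $Kf(x) = \int_0^\infty f(x+s)\lambda e^{-\lambda s}\,ds = \E\po f(x+E_\lambda)\pf$ where $E_\lambda \sim \mathcal E(\lambda)$, so $K(x)$ is exactly the pushforward of $\mathcal E(\lambda)$ by the translation $u\mapsto u+x$.

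For the sub-commutation \eqref{EquaConfineSubComm}: here $b(x)=1$, so $b^*\na f = f'$, and differentiating under the integral sign gives $(Kf)'(x) = \int_0^\infty f'(x+s)\lambda e^{-\lambda s}\,ds = K(f')(x)$. Hence $|(Kf)'|^2 = |K(f')|^2 \leq K|f'|^2$ by the Cauchy--Schwarz inequality applied to the probability kernel $K$, which is exactly \eqref{EquaConfineSubComm} with $\gamma = 1$. (One can also invoke Lemma \ref{LemDeriveK} with $\lambda$ constant, which gives the same identity.)

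For the local Poincar\'e inequality \eqref{EquaConfineLocalB}: the exponential law $\mathcal E(1)$ satisfies $\mathcal B(2,4)$, i.e. a Poincar\'e inequality with constant $4$; rescaling by $z\mapsto z/\lambda$ turns this into $\mathcal B(2,4\lambda^{-2})$ for $\mathcal E(\lambda)$, as recorded in the text. Since the translation $u\mapsto u+x$ is $1$-Lipschitz (its Jacobian is the identity), the image law $K(x)$ of $\mathcal E(\lambda)$ under this map still satisfies $\mathcal B(2,4\lambda^{-2})$ by the general stability of Beckner's inequalities under Lipschitz maps mentioned just before Definition \ref{DefContractifEntropie}. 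Written out, this is precisely $\tfrac{Kf^2(x) - (Kf)^2(x)}{2-1} \leq 4\lambda^{-2} K|f'|^2(x)$ for every $x$, which is \eqref{EquaConfineLocalB} with $p=2$ and $c = 4\lambda^{-2}$. Combining the two points, $K$ is $(4\lambda^{-2},1,2)$-confining.

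There is no real obstacle here; the only point requiring a sentence of care is that the weighted-gradient weight is $a=1$ so that the relevant energy is the plain $|f'|^2$, and that the constant $\gamma=1$ (not $<1$) means $K$ is merely confining, not contractive — which is why in the subsequent analysis one composes $K$ with the genuinely contractive jump kernel $Q$ to obtain a contractive transition operator $P = KQ$ for the embedded chain.
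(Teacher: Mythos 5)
Your proposal is correct and follows the same route as the paper: you verify the sub-commutation via the exact identity $(Kf)' = K(f')$ together with Cauchy--Schwarz (giving $\gamma=1$), and you obtain the local Poincar\'e inequality for $K(x)$ by scaling $\mathcal B(2,4)$ for $\mathcal E(1)$ to $\mathcal B(2,4\lambda^{-2})$ for $\mathcal E(\lambda)$ and then noting the translation $u\mapsto u+x$ is $1$-Lipschitz. This is precisely the short argument the paper sketches in the sentences immediately preceding the lemma.
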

 As far as the jump operator $Qf(x) = \E\po f(R x)\pf$ is concerned, we have already used the sub-commutation 
\[\po (Qf)'\pf^2 \leq \E\po R^2 \pf Q(f')^2.\]
However a local Poincar\'e inequality \eqref{EquaConfineLocalB} for $Q(x)$ would mean $\forall f\in\A,\ x>0$,
\begin{eqnarray*}
 \E \po f^2(R x) \pf - \po \E \po f(R x)\pf\pf^2& \leq &c \E \co \po f'(R x)\pf^2 \cf\\
 \Leftrightarrow\hspace{15pt}\E \po g_x^2(R) \pf -\po \E \po g_x(R)\pf\pf^2 &\leq &\frac{c}{x^2} \E \co \po g_x'(R)\pf^2 \cf
\end{eqnarray*}
with $g_x(r) = f(r x)$. This implies the law of $R$ satisfies $\mathcal B\po 2,cx^{-2}\pf$ for all $x>0$, hence $\mathcal B\po 2,0\pf$, which means $R$ is deterministic. Indeed, when $R$ is deterministic, the local inequality always holds:
\begin{lem}
If $R = \delta$ a.s. with a constant $\delta \in[0,1)$ then $Q$ is $(0,\delta^2,p)$-contractive.
\end{lem}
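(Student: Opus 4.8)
The plan is to verify directly the two conditions of Definition \ref{DefContractifEntropie}, taking advantage of the fact that $Q$ is a \emph{deterministic} kernel when $R=\delta$ a.s.: indeed $Qf(x)=f(\delta x)$, so $Q(x)=\delta_{\delta x}$ is a Dirac mass and $Q$ is the composition kernel of the $\delta$-Lipschitz map $\psi(x)=\delta x$. This is exactly the situation of the first example following Definition \ref{DefContractifEntropie}, so only a brief computation is needed.

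First I would check the local Beckner's inequality with $c=0$. Since $Q(g)(x)=g(\delta x)$ for every $g\in\A$, one has $Qf^2(x)=f^2(\delta x)=\po Qf^{2/p}(x)\pf^p$, so that the left-hand side of \eqref{EquaConfineLocalB} vanishes identically; the case $p=1$ is identical, using $Q\po f^2\ln f^2\pf(x)=f^2(\delta x)\ln f^2(\delta x)=Qf^2(x)\ln Qf^2(x)$, so that the left-hand side of \eqref{EquaConfineLocalB1} vanishes as well. Hence \eqref{EquaConfineLocalB} and \eqref{EquaConfineLocalB1} hold with $c=0$.

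Then I would check the sub-commutation \eqref{EquaConfineSubComm} with $\gamma=\delta^2$. Here $\po Qf^{2/p}\pf^{p/2}(x)=\po f^{2/p}(\delta x)\pf^{p/2}=|f(\delta x)|$, independently of $p$, so differentiation gives $\left|\na\po Qf^{2/p}\pf^{p/2}\right|^2(x)=\delta^2|\na f|^2(\delta x)=\delta^2\,Q|\na f|^2(x)$; equivalently, this is the general fact that the composition kernel of a $\gamma$-Lipschitz map satisfies the sub-commutation with constant $\gamma^2$, here with $\gamma=\delta$. Since $\delta\in[0,1)$ we get $\delta^2<1$, so $Q$ is $(0,\delta^2,p)$-confining with sub-commutation constant strictly less than $1$, i.e. $(0,\delta^2,p)$-contractive, which is the claim.

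The computation is essentially immediate. The only point deserving a word of care is that $|f|$ fails to be smooth at the zeros of $f$, so the sub-commutation step should be made rigorous by the usual regularization, replacing $f^{2/p}$ by $\po f^2+\varepsilon\pf^{1/p}$ and letting $\varepsilon\to0$ (the bound $\delta^2 Q|\na f|^2$ being uniform in $\varepsilon$); no genuine obstacle is expected.
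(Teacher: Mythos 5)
Your proposal is correct and takes essentially the same route as the paper, which simply appeals to the first example following Definition \ref{DefContractifEntropie}: when $R=\delta$ a.s., $Q$ is the composition kernel of the $\delta$-Lipschitz map $x\mapsto\delta x$, the local inequality holds trivially with $c=0$ because $Q(x)$ is a Dirac mass, and the sub-commutation holds with $\gamma=\delta^2$. Your unpacking of these two checks, including the remark about regularizing $|f|$, is a faithful expansion of the argument the paper leaves implicit.
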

When $R$ is random, what prevent to straightforwardly use our argument is the possibility of arbitrarily little concentrated jump, for instance with uniform law on $(0,x)$ for any $x$. It's a shame because if, say, $R$ is uniform on $\po0,\frac12\pf$, it means when the process jumps it is at least divided by 2 but can be even much more contracted. In particular its invariant measure should be more concentrated near zero than the process with  $R = \frac12$ a.s. for which, as we will see, the invariant measure satisfies a Poincar\'e inequality. This illustrates a limit of our procedure.

\begin{prop}\label{PropTCPtauxconstant}
If $R = \delta$ is deterministic then $\mu$ satisfies the Poincar\'e inequality
\begin{eqnarray*}
\forall f\in\A,\hspace{20pt} \mu (f - \mu f)^2 & \leq & \frac{4}{\lambda^2(1-\delta^2)}\mu (f')^2.
\end{eqnarray*}
As a consequence,
\begin{eqnarray*}
\forall f\in\A,\hspace{20pt} \mu (P_t f - \mu f)^2  & \leq & \frac{4e^{-\lambda(1-\delta^2)t}}{\lambda(1-\delta^2)}\mu (f')^2.
\end{eqnarray*}
\end{prop}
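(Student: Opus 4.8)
The plan is to chain together the three operators $Q$, $K$, and $\widetilde K$ using Lemma~\ref{LemConfine}, exactly following the strategy outlined at the start of Section~\ref{SectionEmbedded}. First I would identify the embedded chain transition operator $P = KQ$. Since $Q$ is $(0,\delta^2,2)$-contractive and $K$ is $(4\lambda^{-2},1,2)$-confining (both established in the preceding lemmas), point 1 of Lemma~\ref{LemConfine} gives that $P = KQ$ is $(4\lambda^{-2} + 1\cdot 0, 1\cdot\delta^2, 2) = (4\lambda^{-2},\delta^2,2)$-contractive. The chain $Z$ is ergodic (this follows from the cited ergodicity results for the TCP process, or by checking irreducibility plus a Lyapunov condition as in the running example), so point 3 of Lemma~\ref{LemConfine} yields that $\mu_e$ satisfies $\mathcal B(2, 4\lambda^{-2}(1-\delta^2)^{-1})$.

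Next I would pass from $\mu_e$ to $\mu$ via Lemma~\ref{lemMueMu}, which gives $\mu = \nu_e \widetilde K$ with $\nu_e$ a reweighting of $\mu_e$. Here the constant jump rate simplifies everything drastically: since $\lambda$ is constant, $K(1/\lambda) = 1/\lambda$ is constant, so $\nu_e = \mu_e$ and moreover $\widetilde K = K$ (the density of $\widetilde T_x$ is $\lambda e^{-\lambda t}$, the same exponential law, independent of $x$). Thus $\mu = \mu_e K$, and since $K$ is $(4\lambda^{-2},1,2)$-confining, point 2 of Lemma~\ref{LemConfine} gives that $\mu$ satisfies $\mathcal B(2, 4\lambda^{-2} + 1\cdot 4\lambda^{-2}(1-\delta^2)^{-1})$. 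This is not quite the claimed constant $4\lambda^{-2}(1-\delta^2)^{-1}$, so I expect the sharper route is to apply point 2 directly with $\nu = \mu_e$ but noting that one can instead argue as follows: the chain generated by $P$ applied to the already-confining kernel structure. Actually the cleanest path giving the stated constant is to observe $\mu = \mu_e K$ and that $\mu_e K = \lim_n \nu_0 P^n K = \lim_n \nu_0 (KQ)^n K$; a direct telescoping of the local-Beckner defect along this product, using $\gamma = \delta^2$ for each $Q$-factor and $\gamma=1$ for each $K$-factor, and the geometric series $\sum_k \delta^{2k} = (1-\delta^2)^{-1}$, yields $\mathcal B(2, 4\lambda^{-2}(1-\delta^2)^{-1})$ for $\mu$. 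This is the same computation as in the proof of Lemma~\ref{LemConfine} point 3, just started and ended with a $K$ instead of terminating at a constant kernel.

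Finally, the second assertion follows immediately by combining the Poincaré inequality just proved with Proposition~\ref{TCPtauxconstantGradient}: write $V_t = \mu(P_t f - \mu f)^2$ and $W_t = \mu|(P_t f)'|^2$. Proposition~\ref{TCPtauxconstantGradient} integrated against $\mu$ gives $W_t \leq e^{-\lambda(1-\delta^2)t} W_0 = e^{-\lambda(1-\delta^2)t}\mu(f')^2$. The Poincaré inequality applied to $P_t f$ gives $V_t \leq \frac{4}{\lambda^2(1-\delta^2)} W_t$. Combining, $V_t \leq \frac{4}{\lambda^2(1-\delta^2)} e^{-\lambda(1-\delta^2)t}\mu(f')^2 = \frac{4 e^{-\lambda(1-\delta^2)t}}{\lambda(1-\delta^2)}\mu(f')^2$, which is exactly the claim. (Alternatively this also drops out of Theorem~\ref{ThmConcergenceH1} with $a=1$, $M=\delta^2$, $\nabla\lambda = 0$, $J_b = 0$, $\eta = \lambda(1-\delta^2)$, taking $\beta \to \infty$.)

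The main obstacle I anticipate is bookkeeping the constant in the confining-to-invariant-measure step so as to land on precisely $4\lambda^{-2}(1-\delta^2)^{-1}$ rather than a slightly larger value; this requires being careful that the $K$-factor sitting between $\mu_e$ and $\mu$ should be folded into the geometric series rather than added on top of it, i.e.\ recognizing $\mu = \mu_e K$ as the weak limit of $\nu_0 (KQ)^n K$ and telescoping the whole product at once. Everything else — the sub-commutations, the local Poincaré inequality for the exponential law, the ergodicity of the embedded chain — has already been supplied in the preceding lemmas and examples.
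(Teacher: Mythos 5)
Your high-level strategy matches the paper's exactly: pass through the embedded chain, use Lemma~\ref{LemConfine} to obtain a Poincar\'e inequality for $\mu_e$ from the contractiveness of $P = KQ$, then push it forward to $\mu = \mu_e K$. The problem is a bookkeeping error in your application of Lemma~\ref{LemConfine}, point~1. With $H_1 = K$ (so $c_1 = 4\lambda^{-2}$, $\gamma_1 = 1$) and $H_2 = Q$ (so $c_2 = 0$, $\gamma_2 = \delta^2$), the stated formula gives
\[
c_{KQ} \;=\; c_2 + \gamma_2 c_1 \;=\; 0 + \delta^2 \cdot 4\lambda^{-2} \;=\; \frac{4\delta^2}{\lambda^2},
\]
not $c_1 + \gamma_1 c_2 = 4\lambda^{-2}$ as you wrote; you have swapped the roles of the two factors. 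So $P = KQ$ is $(4\delta^2\lambda^{-2}, \delta^2, 2)$-confining, and point~3 gives that $\mu_e$ satisfies $\mathcal B\!\left(2, \frac{4\delta^2}{\lambda^2(1-\delta^2)}\right)$, a factor $\delta^2$ smaller than the constant you obtained.

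With that corrected, the step you abandoned lands exactly on the claimed constant: applying point~2 with $\nu = \mu_e$ and $H_2 = K$ yields that $\mu = \mu_e K$ satisfies
\[
\mathcal B\!\left(2,\ \frac{4}{\lambda^2} + \frac{4\delta^2}{\lambda^2(1-\delta^2)}\right) \;=\; \mathcal B\!\left(2,\ \frac{4}{\lambda^2(1-\delta^2)}\right).
\]
The telescoping detour is therefore unnecessary. It is not wrong --- it essentially unrolls the inductive argument in the proof of Lemma~\ref{LemConfine}, and if carried out carefully (including the passage to the weak limit) it does give the right constant --- but it is strictly harder to make rigorous than the clean two-step application that the paper uses, and you only reached for it because of the arithmetic slip. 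Your treatment of the constant-rate simplifications ($\nu_e = \mu_e$ and $\widetilde K = K$) and of the second assertion (combining the Poincar\'e inequality with Proposition~\ref{TCPtauxconstantGradient}) are both correct and match the paper.
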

\begin{proof}
Since $K$ and $Q$ are confining, from Lemma \ref{LemConfine}, $P=KQ$ is $(4\lambda^{-2}\delta^2,\delta^2,2)$-confining and $\mu_e$ the invariant measure of the embedded chain satisfies $\mathcal B\po 2,\frac{4\delta^2}{\lambda^2(1-\delta^2)}\pf$. From Lemma \ref{lemMueMu}, $\mu=\mu_e K$ and so by Lemma \ref{LemConfine} again $\mu$ satisfies $\mathcal B\po 2,\frac{4}{\lambda^2(1-\delta^2)}\pf$. The second inequality is a consequence of this Poincar\'e inequality and of Proposition \ref{TCPtauxconstantGradient}.
\end{proof}

In fact in this example the spectrum of the generator in $L^2(\mu)$ is explicit: there are polynomial eigenfunctions, and since the tail of $\mu$ is exponential, polynomials are dense in $L^2(\mu)$ and these eigenfunctions are the only one in $L^2(\mu)$ . The eigenvalues are $l_k=\lambda(\E\co R^k\cf-1)$ with $k\in\mathbb Z^+$. The convergence rate of the $L^2$-norm obtained in Proposition \ref{PropTCPtauxconstant} for a deterministic $R$ appears to be $\frac12|l_2|$ and not the spectral gap $|l_1|$, and of course
\[\frac12|l_2| = \lambda \E\co (1-R)\frac{1+R}2\cf \leq  \lambda \E (1-R) = |l_1|.\]
Nevertheless $\frac12|l_1|\leq \frac12|l_2|$ so we get the right rate up to a factor $1/2$.

\subsection{The storage model}

Let $U$ be a positive random variable, and consider the generator on $\R_+$
\[Lf(x) = - xf'(x) + \lambda\po \E\co f(x+U)\cf - f(x)\pf.\]
This is, in a sense, the converse of the TCP: the jumps send the process away from 0 and the flow brings it back. Applying Theorem \ref{ThmConvergencePonctuelle} with $M= 1$, $a=1$ and $J_b = -1$, we get
\begin{eqnarray}\label{InegalStorage}
|\nabla P_t f|^2 \leq e^{-2t} P_t |\nabla f|^2.
\end{eqnarray}
Besides in this case it is easy to obtain a Wasserstein decay, as the distance $s$ between two processes starting at different point and coupled to have the same jump times and the same jump sizes $U$ at each jump satisfies $s'=-s$, and such a decay implies \eqref{InegalStorage} (see \cite{Kuwada}; the converse is not clear, since $P_t$ is a mix of a Dirac mass and a smooth density).

To prove a Beckner's inequality, the same problem arises as in the previous example with a random $R$: here the law $K(x)$, namely the law of $e^{-T} x$ with $T$ an exponential random variable, can be as little concentrated as possible when $x$ goes to infinity, so that $K$ does not satisfy a local Beckner's inequality \eqref{EquaConfineLocalB}.

\subsection{The TCP with increasing rate}

Consider the generator on $\R_+$
\begin{eqnarray}\label{EqgenerateurTCP}
Lf(x) & = & f'(x) + \lambda(x) \po f(\delta x) - f(x) \pf.
\end{eqnarray}
We have already studied the constant rate case. Before tackling the case of $\lambda(x) = x$, we consider in this section an intermediate difficulty, with the following assumptions: $\lambda$ is non-decreasing, $\lambda(0)=\lambda_*>0$, and $\ln \lambda$ is a $\kappa$-Lipschitz function. Let $\beta = \frac{2\kappa^2}{1-\delta^2}$, so that
\begin{eqnarray*}
\frac{(\lambda')^2}{\beta \lambda} - \lambda\po 1 - \delta^2\pf & = & \frac{\lambda\po 1 - \delta^2\pf}2\po \frac{(\lambda')^2}{\lambda^2 \kappa^2}-2\pf\\
& \leq & -\frac{\lambda_*\po 1 - \delta^2\pf}2.
\end{eqnarray*}
In other word, Inequality \eqref{EquationThm1} holds with $\eta = -\frac{\lambda_*\po 1 - \delta^2\pf}2$ and $a=1$. To apply Theorem \ref{ThmConcergenceH1}, we also need to prove a Poincar\'e inequality.

\begin{lem}
The operators
\[K f(x) = \int_0^\infty f(x+t) \lambda(x+t) e^{-\int_0^t \lambda(x+s)ds}dt\]
and
\[\widetilde K f(x) = \int_0^\infty f(x+t) \frac{ e^{-\int_0^t \lambda(x+s)ds}}{\int_0^\infty e^{-\int_0^u \lambda(x+s)ds}}dt\]
are $\po \frac{4}{\lambda_*^2},1,2\pf$-confining.
\end{lem}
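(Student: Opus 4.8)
The plan is to verify the two requirements of Definition~\ref{DefContractifEntropie} with $p=2$ separately: the sub-commutation \eqref{EquaConfineSubComm} with $\gamma=1$, and the local Poincar\'e inequality \eqref{EquaConfineLocalB} with $c=4/\lambda_*^2$. Throughout, note that the flow is $\varphi_x(t)=x+t$, so $b\equiv 1$, ``non-decreasing along the flow'' simply means non-decreasing, and the standing assumption gives $\lambda\geq\lambda(0)=\lambda_*>0$.

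For the sub-commutation I would quote directly the commutation identities already proved: Lemma~\ref{LemDeriveK} gives $\left|\po Kf\pf'\right|\leq K|f'|$ and Lemma~\ref{lemKtilde} gives $\left|\po \widetilde Kf\pf'\right|\leq \widetilde K|f'|$. Squaring and applying the Cauchy--Schwarz inequality to the Markov kernels $K$ and $\widetilde K$ yields $\left|\po Kf\pf'\right|^2\leq\po K|f'|\pf^2\leq K(f')^2$ and likewise for $\widetilde K$, which is \eqref{EquaConfineSubComm} with $\gamma=1$.

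The real content is the local Poincar\'e inequality, for which I would use a transport argument. Recall $\mathcal E(1)$ satisfies $\mathcal B(2,4)$, hence by the $\lambda_*^{-1}$-Lipschitz change of variable $z\mapsto z/\lambda_*$ the law $\mathcal E(\lambda_*)$ satisfies $\mathcal B(2,4/\lambda_*^2)$. Then I would establish the elementary fact that any probability law on $\R_+$ with hazard rate bounded below by $\lambda_*$ is the image of $\mathcal E(\lambda_*)$ by a $1$-Lipschitz map, hence also satisfies $\mathcal B(2,4/\lambda_*^2)$: if such a law has survival function $t\mapsto e^{-\int_0^t\rho}$ with $\rho\geq\lambda_*$, its increasing transport map $\phi$ from $\mathcal E(\lambda_*)$ satisfies $\int_0^{\phi(t)}\rho=\lambda_* t$, so $\phi'(t)=\lambda_*/\rho(\phi(t))\leq 1$. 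Now $Kf(x)=\E\po f(x+T_x)\pf$ where $T_x$ has hazard rate $t\mapsto\lambda(x+t)\geq\lambda_*$, so the law of $T_x$, and therefore its translate $K(x)$, satisfies $\mathcal B(2,4/\lambda_*^2)$, which is exactly \eqref{EquaConfineLocalB} for $K$ at $x$. For $\widetilde K$, I would recall from the proof of Lemma~\ref{LemRestrictLipschitztilde} that the variable $\widetilde T_x$ with density proportional to $e^{-\Lambda_x(\cdot)}$ can be realized with hazard rate $r(t)=e^{-\Lambda_x(t)}/\int_t^\infty e^{-\Lambda_x(s)}ds$, and that the convexity of $\Lambda_x$ --- which holds precisely because $\lambda$ is non-decreasing --- forces $r(t)\geq\Lambda_x'(t)=\lambda(x+t)\geq\lambda_*$; so $\widetilde K(x)$, the law of $x+\widetilde T_x$, again satisfies $\mathcal B(2,4/\lambda_*^2)$. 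Collecting the two points, both $K$ and $\widetilde K$ satisfy \eqref{EquaConfineSubComm} with $\gamma=1$ and \eqref{EquaConfineLocalB} with $c=4/\lambda_*^2$, so both are $\po 4/\lambda_*^2,1,2\pf$-confining.

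I expect the only genuinely delicate point to be the transport claim above (and, for $\widetilde K$, re-deriving the lower bound on the hazard rate of $\widetilde T_x$ via the convexity trick of Lemma~\ref{LemRestrictLipschitztilde}); once these are in hand, everything else is a direct consequence of the commutation lemmas and of the known Poincar\'e constant of the exponential distribution.
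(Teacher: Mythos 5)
Your proof is correct, and the sub-commutation part is handled exactly as in the paper, by combining Lemmas \ref{LemDeriveK} and \ref{lemKtilde} with Cauchy--Schwarz. The only genuine divergence is in the local Poincar\'e inequality: the paper first invokes Lemmas \ref{LemRestrictLipschitz} and \ref{LemRestrictLipschitztilde} to reduce to the single base point $x=0$, and then observes that $T_0 = \Lambda_0^{-1}(E)$ with $\Lambda_0^{-1}$ concave and $(\Lambda_0^{-1})'(0) = 1/\lambda_*$, hence $1/\lambda_*$-Lipschitz (and likewise for $\widetilde T_0$, using $r(0) \geq \lambda_*$). You instead treat all $x$ simultaneously via the hazard-rate transport map from $\mathcal{E}(\lambda_*)$, using that the hazard rate of $T_x$ is $\lambda(x+\cdot)\geq\lambda_*$, and that of $\widetilde T_x$ is $r\geq\lambda(x+\cdot)\geq\lambda_*$ by the convexity argument inside Lemma \ref{LemRestrictLipschitztilde}. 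The two routes rest on the very same pointwise bound $\lambda(x+t)\geq\lambda_*$, so they are not fundamentally different in content; what your decomposition buys is that the full statements of Lemmas \ref{LemRestrictLipschitz} and \ref{LemRestrictLipschitztilde} (one-Lipschitz reduction of $T_{\varphi_x(t)}$ to $T_x$) are never needed here --- only the hazard-rate inequality that appears as an intermediate step in the latter's proof --- which makes the argument slightly shorter and more self-contained.
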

\begin{proof}
The sub-commutation \eqref{EquaConfineSubComm} is a direct consequence of Lemma \ref{LemDeriveK} and \ref{lemKtilde}, since the rate of jump is non-decreasing and $b=1$. On the other hand $K(x)$ (resp $\widetilde K(x)$) is the law of $x+T_x$ (resp. $x+\widetilde T_x$) which is from Lemma \ref{LemRestrictLipschitz} the image by a 1-Lipschitz function of $T_0$ (resp. $\widetilde T_0$). 
Thus we only need to prove the inequality holds for $K(0)$ and $\widetilde K(0)$.

For the case of $K(0)$, denote by $F(t) = 1 - e^{-\Lambda_0(t)}$ the cumulative function of $T_0$. Then, if $E$ is a standard exponential random variable,
\begin{eqnarray*}
T_0  & \overset{dist}=&  F^{-1}\po 1 - e^{- E}\pf\\
 & = & \Lambda_0^{-1}( E).
\end{eqnarray*}
Since $\Lambda_0^{-1}$ is a non-decreasing concave function with $\po\Lambda_0^{-1}\pf '(0) = \frac{1}{\lambda_*}$, $T_0$ is a $\frac{1}{\lambda_*}$-Lipschitz transformation of $E$, whose law satisfies the Poincar\'e inequality $\mathcal B(2,4)$.

In Lemma \ref{LemRestrictLipschitztilde} we saw the cumulative function of $\widetilde T_0$ is $t\mapsto 1-e^{-\int_0^t r(s)ds}$ with an increasing function $r$ defined by
\[r(t) = \frac{e^{-\Lambda_0(t)}}{\int_t^\infty e^{-\Lambda_0(s)}ds}.\]
The previous argument shows $\widetilde T_0$ is a $\frac{1}{r(0)}$-Lipschitz transformation of $E$, and
\[ r(0) = \frac{1}{\int_0^\infty e^{-\Lambda_0(s)}ds} \geq \frac{1}{\int_0^\infty e^{-\lambda_*s}ds} = \lambda_*.\]
\end{proof}
\textbf{Remark:} in fact if moreover $\lambda(x) \geq k(1+x)^q$ for some $k>0$ and $q\in[0,1]$, the laws of $T_0$ and $\widetilde T_0$ satisfy some generalized Poincar\'e inequality $\mathcal I(\alpha,c)$ with $\alpha = \frac{2q}{q+1}$ (see \cite[Theorem 3]{Barthe} and \cite{Chafai2004}), or in other words the Beckner's inequalities $\mathcal B\po p,c(1-p)^{\alpha-1}\pf$ for all $p\in(1,2]$. By the previous arguments, $K$ and $\widetilde K$ are $\po c(1-p)^{\alpha-1},1,p\pf$-confining for all $p\in(1,2]$.

\begin{cor}\label{CorTCPIncrease}
The invariant measure $\mu$ of the process satisfies a Poincaré inequality $\mathcal B(2,c)$ for some explicit $c>0$.
\end{cor}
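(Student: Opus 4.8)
The plan is to assemble the three-step strategy outlined just before Section \ref{SectionEmbedded}: first obtain a Beckner (here Poincaré) inequality for $\mu_e$, the invariant law of the embedded chain, then transfer it to $\nu_e$ via a perturbative argument, then push it forward to $\mu$ through the kernel $\widetilde K$. The transition operator of the embedded chain is $P = KQ$, where $Qf(x) = f(\delta x)$ and $K$ is the inter-jump kernel treated in the Lemma just proved. By that Lemma, $K$ is $\left(\tfrac{4}{\lambda_*^2},1,2\right)$-confining, and since $Q$ is the pushforward by the $\delta$-Lipschitz map $x\mapsto \delta x$ with $\delta\in(0,1)$, $Q$ is $(0,\delta^2,2)$-contractive (the local Poincaré inequality is trivial because $Q(x)$ is a Dirac mass). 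Composing via Lemma \ref{LemConfine}, $P = KQ$ is $\left(\tfrac{4}{\lambda_*^2},\delta^2,2\right)$-confining, hence contractive since $\delta^2<1$. Assuming the embedded chain is ergodic (irreducibility plus a Lyapunov function, exactly as in the constant-rate example, since $\lambda$ is bounded below by $\lambda_*>0$ and $\delta<1$), the third item of Lemma \ref{LemConfine} gives that $\mu_e$ satisfies $\mathcal B\!\left(2,\tfrac{4}{\lambda_*^2(1-\delta^2)}\right)$.

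Next I would relate $\mu$ to $\mu_e$. Since $\lambda(0)=\lambda_*>0$, the constant $C = \mu_e K(\tfrac1\lambda)$ is finite (the integrand $K(\tfrac1\lambda)(x) = \mathbb E\big[\tfrac{1}{\lambda(\varphi_x(T_x))}\big]\le \tfrac1{\lambda_*}$ is bounded), so Lemma \ref{lemMueMu} applies and $\mu = \nu_e \widetilde K$ with $\nu_e f = C^{-1}\mu_e[f\,K(\tfrac1\lambda)]$. The measure $\nu_e$ is a bounded density perturbation of $\mu_e$: the Radon–Nikodym derivative $\tfrac{d\nu_e}{d\mu_e} = C^{-1}K(\tfrac1\lambda)$ is bounded above by $C^{-1}\lambda_*^{-1}$ and below by a positive constant (indeed $K(\tfrac1\lambda)(x)\ge \tfrac1{\lambda(\text{value at the jump})}$; a uniform positive lower bound follows if $\lambda$ has at most polynomial growth along the flow and $\mu_e$ has the finite moments guaranteed by the Lyapunov function — more carefully, one uses that $K(1/\lambda)$ is strictly positive and controlled). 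Since a Poincaré inequality is stable under bounded (above and below) density perturbations — this is precisely the perturbative result stated in the Appendix — $\nu_e$ satisfies $\mathcal B(2,c')$ for an explicit $c'$ depending on $c$, the perturbation bounds, and $\lambda_*$.

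Finally, the Lemma just proved also gives that $\widetilde K$ is $\left(\tfrac{4}{\lambda_*^2},1,2\right)$-confining. Applying the second item of Lemma \ref{LemConfine} with $H_2 = \widetilde K$ and the measure $\nu_e$, we conclude that $\mu = \nu_e\widetilde K$ satisfies $\mathcal B\!\left(2,\tfrac{4}{\lambda_*^2}+c'\right)$, which is the desired Poincaré inequality $\mathcal B(2,c)$ with $c$ explicit.

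I expect the main obstacle to be the middle step: controlling the density $C^{-1}K(\tfrac1\lambda)$ from below uniformly on $\Omega$, so as to legitimately invoke the perturbative stability of Poincaré inequalities. The upper bound is immediate from $\lambda\ge\lambda_*$, but the lower bound requires knowing that $\varphi_x(T_x)$ does not escape to $+\infty$ too fast — i.e.\ a tail/moment estimate on the jump, which in turn needs the Lyapunov control on the embedded chain. Since $\ln\lambda$ is $\kappa$-Lipschitz, $\lambda$ grows at most exponentially along the flow, so $K(\tfrac1\lambda)(x)\ge \mathbb E[e^{-\kappa T_x}]/\lambda(x)$, and combining this with integrability of $e^{-\kappa T_x}$ under $T_x$ gives a bound that, integrated against $\mu_e$, yields the required positivity of $C$ and of the density $\mu_e$-a.e.; turning "$\mu_e$-a.e." into a genuine essential lower bound is the delicate point, and is exactly where the hypotheses $\lambda(0)=\lambda_*>0$ and the $\kappa$-Lipschitz bound on $\ln\lambda$ are used.
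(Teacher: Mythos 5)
Your overall three-step plan — Poincar\'e for $\mu_e$ via contractivity of $P=KQ$, perturbation to $\nu_e$, push-forward through $\widetilde K$ — is the paper's strategy, and your first and third steps are essentially correct (up to a minor arithmetic slip: by Lemma~\ref{LemConfine}, $P=KQ$ with $K$ $(4/\lambda_*^2,1,2)$-confining and $Q$ $(0,\delta^2,2)$-confining is $\left(\tfrac{4\delta^2}{\lambda_*^2},\delta^2,2\right)$-confining, not $\left(\tfrac{4}{\lambda_*^2},\delta^2,2\right)$, so $\mu_e$ satisfies $\mathcal B\bigl(2,\tfrac{4\delta^2}{\lambda_*^2(1-\delta^2)}\bigr)$).

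The genuine gap is in your middle step, and you yourself flag it: you try to run a Holley--Stroock-type argument, which needs the density $C^{-1}K(1/\lambda)$ to be bounded \emph{below} uniformly on $\R_+$. No such lower bound exists when $\lambda$ is unbounded: if, say, $\lambda(x)=\lambda_*e^{\kappa x}$ (perfectly admissible, since $\ln\lambda$ is $\kappa$-Lipschitz), then $K(1/\lambda)(x)=\int_0^\infty e^{-\int_0^u\lambda(x+s)\,ds}\,du\to 0$ as $x\to\infty$. So the "bounded above and below density perturbation" you invoke is not available, and moreover you have misread the Appendix: Lemma~\ref{lemPerturbMonotone} is \emph{not} a bounded-density perturbation result. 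It only requires the perturbing factor $g$ to be non-increasing with $g(0)<\infty$, and the resulting constant involves $g(0)/g(m)$ where $m$ is the \emph{median} of the unperturbed measure — no lower bound on $g$ is needed. The piece you are missing is therefore an upper bound on the median $m_e$ of $\mu_e$, and this is where the paper's actual argument lives: it couples the TCP with rate $\lambda$ to the TCP with constant rate $\lambda_*$ so that the former stays pathwise below the latter, deduces $m_e\leq\tfrac{2\delta}{\lambda_*(1-\delta)}$ via a moment estimate on the constant-rate invariant law and Markov's inequality, and then applies Lemma~\ref{lemPerturbMonotone} with $g=h=K(1/\lambda)$, which is non-increasing with $h(0)\leq1/\lambda_*$. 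This coupling argument is the key idea absent from your proposal.
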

\begin{proof}
It is clear the jump operator $Q$ is $(0,\delta^2,2)$-contractive, so that from Lemma \ref{LemConfine}, $P=KQ$ is $\po \frac{4\delta^2}{\lambda_*^2},\delta^2,2\pf$-contractive, and $\mu_e$ the invariant measure of the embedded chain associated with the process satisfies a Poincar\'e inequality $\mathcal B\po 2,\frac{4\delta^2}{\lambda_*^2(1-\delta^2)}\pf$. Let 
\[h(x) = K\po\frac1\lambda\pf(x) = \int_0^\infty e^{-\int_0^s \lambda(x+u)du}ds.\]
It is a non-increasing function with $h(0)\leq  \int_0^\infty e^{-\lambda_* s}ds = \frac1{\lambda_*}$. In order to prove the perturbation $\nu_e$ of $\mu_e$, defined by $\nu_e (f) = \frac{1}{\mu_e(h)} \mu_e (fh)$, satisfies a Poincar\'e inequality, we will use Lemma \ref{lemPerturbMonotone}, which requires an upper bound on the median $m_e$ of $\mu_e$. Note that it is possible to couple a process $X$ with rate $\lambda$ and a process $Z$ with constant rate $\lambda_*$ so that, if they start at the same point, the first one will always stay below the second one: suppose such a coupling $(X,Z)$ has been defined up to a jump time $T_k$ of $X$. Then both process increases linearily up to the next jump time $T_{k+1}$ of $X$. At time $T_{k+1}$, $X$ jumps, but $Z$ jumps only with probability  $\frac{\lambda_*}{ \lambda(X_{T_k}+T_k)}$, else it does not move. In other words the jump part of the generator of $Z$ is thought as
\[\lambda_* \po f(\delta x) - f(x)\pf = \lambda(x) \po \po \frac{\lambda_*}{\lambda(x)} f(\delta x) + \po 1-\frac{\lambda_*}{\lambda(x)}\pf f(x)\pf- f(x)\pf.\]
Such a coupling proves $m_e$ is less than the median of the invariant law of the process with constant rate $\lambda_*$. Let $Z_\infty$ be a random variable with this invariant law, so that, if $E$ is a standard exponential random variable,
\begin{eqnarray*}
Z_\infty & \overset{dist.}= & \delta \po Z_\infty + \frac1\lambda_* E\pf\\
\Rightarrow\hspace{20pt}(1-\delta)\E(Z_\infty) & = & \frac{\delta}{\lambda_*}.
\end{eqnarray*}
Hence from Markov's inequality, $m_e \leq \frac{2\delta}{\lambda_*(1-\delta)}$. 
Finally, from Lemma \ref{lemPerturbMonotone}, $\nu_e$ satisfies a Poincar\'e inequality with constant
\[c' =  \frac{32\delta^2 }{\lambda_*^3(1-\delta^2) h\po \frac{2\delta}{\lambda_*(1-\delta)}\pf}, \]
and since $\widetilde K$ is confining, from Lemma \ref{LemConfine}, $\mu = \nu_e \widetilde K$ satisfies such an inequality with constant
\[c = \frac{4\delta^2}{\lambda_*^2} + c'.\]
\end{proof}
\textbf{Remark:} if, again, $\lambda(x) \geq k(1+x)^q$ for some $k>0$ and $q\in[0,1]$, these arguments prove the invariant measure satisfies a generalized Poincar\'e inequality $\mathcal I(\alpha,c)$ for some $c>0$ and $\alpha = \frac{2q}{q+1}$. Thus the invariant measure inherits the concentration properties of the law of the jump time $T_0$: the logarithm of its density tail is (at most) of order $-x^{q+1}$.

\bigskip

Let $(P_t)_{t\geq 0}$ be the semi-group associated to the generator \eqref{EqgenerateurTCP} and for $f\in\A$ let $W_t = \mu \po (P_t f)'\pf^2$ and $V_t = \mu \po P_t f - \mu f \pf ^2$. We have proved Theorem \ref{ThmConcergenceH1} holds:

\begin{cor}
If $\lambda$ is increasing with $\lambda(0)=\lambda_*>0$ and $\ln\lambda$ is $\kappa$-Lipschitz then
\[W_t + \beta V_t \leq (W_0 + \beta V_0) e^{-\frac{\eta}{1+\beta c} t}.\]
with $c$ given by Corollary \ref{CorTCPIncrease} and
\begin{eqnarray*}
\eta & = & \frac{\lambda_*(1-\delta^2)}{2}\\
\beta  & = &  \frac{2\kappa^2}{1-\delta^2}.
\end{eqnarray*}
\end{cor}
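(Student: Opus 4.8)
The plan is to read this statement off directly from Theorem \ref{ThmConcergenceH1}, applied with the trivial weight $a=1$, so that the whole task reduces to verifying its hypotheses for the generator \eqref{EqgenerateurTCP} and identifying the constants. Here $\Omega=\R_+$, the drift is $b\equiv 1$, so $J_b=0$ and $b^*\na a=0$; the jump operator is $Qf(x)=f(\delta x)$; and $\lambda\geq\lambda_*>0$ everywhere, hence in particular $\lambda>0$ $\mu$-a.e., as required.

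First I would record the sub-commutation for $Q$ with weight $a=1$: since $(Qf)'(x)=\delta f'(\delta x)$, one has $|(Qf)'|^2(x)=\delta^2\,Q|f'|^2(x)$, so $\phi_1(Qf)\leq M\,Q\po\phi_1(f)\pf$ holds with the constant $M=\delta^2<1$. With these choices, and since in dimension one $J_b=0$ and $b^*\na a=0$, the balance condition \eqref{EquationThm1} collapses to the scalar inequality
\[\frac{(\lambda'(x))^2}{\beta\,\lambda(x)}-\lambda(x)\po 1-\delta^2\pf+\eta\ \leq\ 0,\hspace{20pt}x\in\R_+.\]
Taking $\beta=\frac{2\kappa^2}{1-\delta^2}$ and using that $\ln\lambda$ is $\kappa$-Lipschitz (i.e. $(\lambda')^2\leq\kappa^2\lambda^2$) together with $\lambda\geq\lambda_*$, this is exactly the computation already carried out at the start of this subsection, which produced
\[\frac{(\lambda')^2}{\beta\lambda}-\lambda\po 1-\delta^2\pf\ \leq\ -\frac{\lambda_*\po 1-\delta^2\pf}{2};\]
hence \eqref{EquationThm1} holds with $\eta=\frac{\lambda_*(1-\delta^2)}{2}$, which is positive because $\delta\in(0,1)$ and $\lambda_*>0$.

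It then remains to supply the weighted Poincar\'e inequality \eqref{EquationPoincare} with weight $a=1$, i.e.\ an ordinary Poincar\'e inequality $\mathcal B(2,c)$ for $\mu$: this is precisely Corollary \ref{CorTCPIncrease}, whose proof (via the representation $\mu=\nu_e\widetilde K$ of Lemma \ref{lemMueMu}, the confinement of the kernels $K$ and $\widetilde K$, the bound on the median of $\mu_e$, and the perturbation lemma) is where essentially all the difficulty lies. Once all three hypotheses of Theorem \ref{ThmConcergenceH1} are in hand with $\eta,\beta>0$ as above, that theorem yields $W_t+\beta V_t\leq e^{-\eta t/(\beta c+1)}\po W_0+\beta V_0\pf$, which is the claimed estimate. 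There is no genuinely new obstacle in the corollary itself; the only point worth keeping in mind is that the constant $\beta$ is forced by the requirement that it absorb the $(\lambda')^2/\lambda$ term in \eqref{EquationThm1}, and that it is strictly positive precisely when $\ln\lambda$ is non-constant ($\kappa>0$) — the degenerate case $\kappa=0$ being the constant-rate situation already treated in Section \ref{ExampleTCPconstant}.
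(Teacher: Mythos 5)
Your proof is correct and follows essentially the same route as the paper: verify the hypotheses of Theorem \ref{ThmConcergenceH1} with $a=1$, $J_b=0$, $M=\delta^2$, use the Poincar\'e inequality from Corollary \ref{CorTCPIncrease}, and read off the constants from the computation that opens the subsection (noting, as you do, that the displayed $\eta=-\frac{\lambda_*(1-\delta^2)}{2}$ in that computation is a sign typo and the correct positive value appears in the corollary). Nothing further is needed.
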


\subsection{The TCP with linear rate}\label{ExampleTCPLInear}

In this section,
\begin{eqnarray}\label{EqgenerateurTCP}
Lf(x) & = & f'(x) + x \po f(\delta x) - f(x) \pf,
\end{eqnarray}
where $\delta\in[0,1)$, and we will prove Proposition \ref{PropTCPLinea}. We keep the general notations for $(P_t)_{t\geq 0}$, $Q$, $\lambda$ and $\mu$ (for the proof of ergodicity, see \cite{Dumas}), and write $\text{Ent} f = \mu \po f^2\ln f^2\pf - \mu (f^2) \ln \mu( f^2)$.  

\bigskip

In the first instance, from Theorem \ref{ThmConcergenceH1}, Proposition \ref{PropTCPLinea} is proven in Section \ref{Subsub1} under the additional assumption that the invariant law satisfies some weighted functional inequalities. These weighted inequalities are equivalent to non-weighted inequalities for the invariant measure of a twisted process, and the latter may be established thanks to the tools of Section \ref{SectionVariance}. More precisely, in Section \ref{Subsub2}, we prove that the transition operator of the embedded chain corresponding to the twisted process is contractive, which imply its invariant law satisfies a log-Sobolev inequality, and in Section \ref{Subsub3} we transfer this inequality to the continuous-time process via perturbative arguments.

\subsubsection{Decay of the gradient, given the weighted functional inequalities}\label{Subsub1}

Recall Theorem \ref{ThmConcergenceH1} is based on a balance condition on the way the space is contracted or expanded by the drift and the jumps. Here, the deterministic motion is just a translation at constant speed: the flow is isometric. On the other hand the jumps mechanism do contract the space, but there are few jumps in the vicinity of the origin, and thus a condition as \eqref{EquaBalancePasConstant} cannot hold uniformly in $x>0$ with $\eta >0$. An idea is to consider a metric different from the euclidian one which is uniformly contracted for all $x>0$. This metric can be equivalent to the euclidian one for $x$ away from 0, but near 0, it should distend the distances, so that the deterministic flow $\phi_x(t) = x+t$ contracts the new metric (this is reminescent of the construction of the Lyapunov function $\tilde V$ in \cite[Section 3]{Malrieu2011}) .

As we saw on Section \ref{SectionCVH1}, working with another metric is equivalent to working with weighted gradients, namely considering the condition \eqref{EquationThm1} with $a\neq 1$. To cope with the rate of jump that vanishes at the origin, we will apply Theorem \ref{ThmConcergenceH1} with a weight $a$ that behaves linearily near 0. More precisely, let 
\begin{eqnarray*}
a(x) & = &  1 - e^{-x},\\
\phi_a(f)&  = & a|f'|^2,\\
W_t & = & \mu\po \phi_a\po P_t f\pf\pf.
\end{eqnarray*}

\begin{lem}\label{LemmeTCPlinePoinca}
Suppose $\mu$ satisfies the weighted Poincar\'e inequality
\[\forall f\in\A,\hspace{20pt}\mu \po f -\mu f\pf ^2 \leq c\mu \po \phi_a(f) \pf \]
for some $c>0$, and let
\[\theta = \po \frac{3+\sqrt 5}{2} - 1\pf^{-1} + \ln\po \frac{3+\sqrt 5}{2}\pf \simeq 1.58.\]
Then for all $\beta > \po (1-\delta) \theta\pf^{-1}$, $t>0$ and $f\in\A$,
\[W_t \leq e^{- \frac{(1-\delta) \theta - \frac{1}{ \beta}}{1+\beta c}t}\po 1 + \beta c\pf W_0.\]
\end{lem}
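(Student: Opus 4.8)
The idea is to apply Theorem \ref{ThmConcergenceH1} with the specific weight $a(x) = 1-e^{-x}$, the jump rate $\lambda(x)=x$, the drift $b=1$ (so $J_b = 0$) and the jump kernel $Qf(x)=f(\delta x)$. I would first record the elementary ingredients: since $Q$ sends $x\mapsto\delta x$, one has $(Qf)'(x)=\delta f'(\delta x)$, hence
\[\phi_a(Qf)(x)=a(x)\delta^2 f'(\delta x)^2,\qquad Q(\phi_a(f))(x)=a(\delta x) f'(\delta x)^2,\]
so the sub-commutation $\phi_a(Qf)\le M\, Q(\phi_a(f))$ holds with $M(x)=\delta^2 a(x)/a(\delta x)$. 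Next compute the drift-in-the-weight term $b^*\nabla a/a = a'(x)/a(x) = e^{-x}/(1-e^{-x})$ and the jump-rate term $\tfrac{a}{\beta\lambda}(\nabla\lambda)(\nabla\lambda)^* = \tfrac{a(x)}{\beta x}$ (as $\lambda'=1$). Plugging these into the scalar inequality \eqref{EquationThm1} (a one-dimensional condition here, $u\in\R$), it suffices to find $\eta>0$ with
\[\frac{a(x)}{\beta x} + x\left(\frac{\delta^2 a(x)}{a(\delta x)} - 1\right) - \frac{e^{-x}}{1-e^{-x}} + \eta \le 0\qquad\text{for all }x>0.\]

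The heart of the matter is therefore an elementary but delicate one-variable estimate: showing that the left-hand side without the $\eta$ is bounded above by a negative constant, and identifying the best such constant, which the statement claims is $-(1-\delta)\theta + 1/\beta$ with $\theta$ the stated value $\simeq 1.58$. The main obstacle is precisely this calculus estimate near $x=0$, where the rate $\lambda(x)=x$ degenerates: there the term $x(\delta^2 a(x)/a(\delta x)-1)$ tends to $x(\delta-1)\to 0$ (using $a(x)\sim x$), the term $a(x)/(\beta x)\to 1/\beta$, and $-e^{-x}/(1-e^{-x})\sim -1/x\to -\infty$, so the constraint is comfortably satisfied for small $x$; the binding region is moderate $x$. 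I expect the proof to reduce, after the substitution making the $e^{-x}$ terms explicit, to minimizing a function of one real variable; the critical value involves the golden-ratio-like quantity $(3+\sqrt5)/2$, which is consistent with a quadratic arising from setting a derivative to zero (note $(3+\sqrt5)/2$ satisfies $y^2-3y+1=0$). One should check that $x\mapsto \delta^2 a(x)/a(\delta x)$ is decreasing (so that the worst case in $\delta$ or in $x$ is controlled), and combine the bound on $x(\delta^2a(x)/a(\delta x)-1) - e^{-x}/(1-e^{-x})$ with the additive $a(x)/(\beta x)\le 1/\beta$; this yields $\eta = (1-\delta)\theta - 1/\beta$, which is positive exactly under the hypothesis $\beta > ((1-\delta)\theta)^{-1}$.

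Once \eqref{EquationThm1} is verified with this $\eta>0$ and with the weighted Poincaré constant $c$ assumed in the hypothesis, Theorem \ref{ThmConcergenceH1} applies directly and gives
\[W_t \le (1+\beta c)\, e^{-\frac{\eta t}{1+\beta c}} W_0 = (1+\beta c)\, e^{-\frac{(1-\delta)\theta - 1/\beta}{1+\beta c} t} W_0,\]
which is exactly the claimed inequality. So the only real work is the scalar optimization producing the constant $\theta$; everything else is bookkeeping and an invocation of the already-proved Theorem \ref{ThmConcergenceH1}. I would also remark that the choice $a(x)=1-e^{-x}$ is essentially dictated by wanting $a'(x)/a(x)$ to dominate the degenerate jump contribution near $0$ while keeping $a$ bounded (so $a$ is comparable to the Euclidean metric away from $0$), which is why a simple exponential profile works.
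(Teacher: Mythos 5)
Your plan follows exactly the route the paper takes: invoke Theorem \ref{ThmConcergenceH1} with $a(x)=1-e^{-x}$, compute $M$ from $(Qf)'(x)=\delta f'(\delta x)$, reduce \eqref{EquationThm1} to a one-variable inequality, and identify $\theta$ as the minimum of an elementary function. But you stop short of carrying out the single step that is the whole content of the lemma, and the hint you give for it points in a slightly wrong direction. You propose to ``check that $x\mapsto \delta^2 a(x)/a(\delta x)$ is decreasing''. What is actually used is not monotonicity but the \emph{pointwise} bound $M(x)=\delta^2 a(x)/a(\delta x)\leq\delta$, which follows in one line from concavity of $a$ together with $a(0)=0$: $a(\delta x)=a(\delta x+(1-\delta)\cdot 0)\geq\delta a(x)$. (Monotonicity in $x$ would, if true, also give the sup at $x=0^+$, but it is harder to verify and unnecessary; the concavity bound is the clean observation.) After that, the scalar condition \eqref{EquationThm1} becomes $\eta\leq \frac{a'(x)}{a(x)}+(1-\delta)x-\frac{a(x)}{\beta x}$, and using $a'(x)/a(x)=1/(e^x-1)\geq(1-\delta)/(e^x-1)$ and $a(x)\leq x$ one gets the right-hand side bounded below by $(1-\delta)g(x)-1/\beta$ with $g(x)=\frac{1}{e^x-1}+x$. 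Minimizing $g$ (its critical point satisfies $e^x=(e^x-1)^2$, hence $e^x=(3+\sqrt5)/2$) gives $\theta$, so $\eta=(1-\delta)\theta-1/\beta$, and the conclusion is then exactly the second display of Theorem \ref{ThmConcergenceH1}.

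So the framework, the final invocation, and even the ``bookkeeping'' in your proposal are right, and you correctly flag $(3+\sqrt5)/2$ as a root of $y^2-3y+1=0$. The genuine gap is that the scalar estimate is the lemma: you need to actually produce the concavity bound $a(\delta x)\geq\delta a(x)$, factor out $(1-\delta)$ as above, and carry out the one-line minimization, rather than deferring them as ``the only real work''. Without the concavity step in particular, your displayed inequality involving $\delta^2 a(x)/a(\delta x)$ does not reduce to minimizing a $\delta$-free function, and the clean form $\eta=(1-\delta)\theta-1/\beta$ does not drop out.
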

\begin{proof}
Note that $a$ is a concave function, so that
\[a\po \delta x \pf = a\po \delta x + (1-\delta) 0\pf \geq \delta a(x) + (1-\delta)a(0) = \delta a(x). \]
Therefore
\[\phi_a(Qf)(x) = a(x)\delta^2 |f'(\delta x)|^2 \leq \delta a(\delta x)|f'(\delta x)|^2 = \delta Q\po \phi_a(f)\pf(x). \]
To apply Theorem \ref{ThmConcergenceH1} we thus have to bound below
\begin{eqnarray*}
\frac{a'(x)}{a(x)} +  x(1-\delta) - \frac{a(x)}{x \beta} & \geq & (1-\delta)\po \frac{1}{e^{x}-1} + x \pf - \frac1\beta.
\end{eqnarray*}
The function $g(x) = \frac{1}{e^{x}-1} + x $ goes to $+\infty$ at $0$ and $+\infty$ and admits a unique positive critical point for which
\begin{eqnarray*}
e^x & =& (e^x-1)^2\\
\Rightarrow \hspace{20pt}x & = & \ln\po \frac{3+\sqrt 5}2\pf.
\end{eqnarray*}
Hence for all $x>0$, $g(x) \geq g\po \ln\po \frac{3+\sqrt 5}2\pf\pf = \theta$ and Theorem \ref{ThmConcergenceH1} holds with $ \eta = (1-\delta)\theta - \frac{1}{\beta}$.
\end{proof}

\begin{cor}\label{CorTCPlineLogSob}
Suppose $\mu$ satisfies the weighted inequalities, for all $f\in\A$,
\begin{eqnarray}
\mu \po f -\mu f\pf ^2 & \leq&  c_1\mu \po \phi_a(f) \pf\notag,\\
& & \notag\\
\text{Ent}f & \leq & c_2\mu \po \phi_a(f) \pf \label{InegalLogSobPoids}
\end{eqnarray}
for some $c_1,c_2>0$, and let $\theta$ be such as defined in Lemma \ref{LemmeTCPlinePoinca}.
Then for all $\beta > \po (1-\delta) \theta\pf^{-1}$, $t>0$ and $f\in\A$,
\[\text{Ent} P_t f \leq c_2e^{- \frac{(1-\delta) \theta - \frac{1}{ \beta}}{1+\beta c_1}t}\po 1 + \beta c_1\pf \mu (f')^2.\]
\end{cor}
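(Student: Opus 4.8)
The plan is simply to chain the weighted log-Sobolev inequality \eqref{InegalLogSobPoids} together with the gradient decay already obtained in Lemma \ref{LemmeTCPlinePoinca}. First I would observe that, since $\A$ is stable under the semi-group, $P_t f\in\A$ for every $t>0$ and $f\in\A$, so \eqref{InegalLogSobPoids} may be applied to $P_t f$ itself, giving at once
\[\text{Ent}\, P_t f \ \leq\ c_2\, \mu\po \phi_a\po P_t f\pf\pf\ =\ c_2\, W_t .\]

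Next, the hypotheses contain in particular the weighted Poincar\'e inequality with constant $c_1$, which is exactly what Lemma \ref{LemmeTCPlinePoinca} requires (with $c=c_1$). Since $\beta > \po(1-\delta)\theta\pf^{-1}$ we have $(1-\delta)\theta - \frac1\beta > 0$, and the lemma yields
\[W_t\ \leq\ e^{-\frac{(1-\delta)\theta - \frac1\beta}{1+\beta c_1}\, t}\, \po 1+\beta c_1\pf\, W_0 .\]
Then it remains only to control the initial energy: because $a(x)=1-e^{-x}$ satisfies $0\le a(x)\le 1$ for all $x>0$, one has $\phi_a(f)(x)=a(x)|f'(x)|^2\le |f'(x)|^2$, hence $W_0=\mu\po\phi_a(f)\pf\le \mu (f')^2$. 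Combining the three displays gives
\[\text{Ent}\, P_t f\ \leq\ c_2\, e^{-\frac{(1-\delta)\theta - \frac1\beta}{1+\beta c_1}\, t}\, \po 1+\beta c_1\pf\, \mu (f')^2 ,\]
which is the announced estimate.

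For this corollary there is essentially no obstacle: it is a purely formal consequence of Lemma \ref{LemmeTCPlinePoinca} and of the standing hypotheses (and one could even dispense with the Poincar\'e assumption, since the weighted log-Sobolev inequality \eqref{InegalLogSobPoids} implies it by linearisation, with a constant no larger than $c_2$). The real work — postponed to Sections \ref{Subsub2} and \ref{Subsub3} — is to establish that the invariant measure $\mu$ does satisfy the weighted inequalities assumed here, via the analysis of the embedded chain of a suitably twisted process and the perturbation results of Section \ref{SectionVariance}.
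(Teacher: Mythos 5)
Your proof is correct and is essentially identical to the paper's: apply \eqref{InegalLogSobPoids} to $P_t f$, invoke Lemma \ref{LemmeTCPlinePoinca} for the decay of $W_t$, and bound $W_0\le\mu(f')^2$ using $a\le 1$. Your closing remark that the weighted log-Sobolev inequality already implies the weighted Poincar\'e one (by linearisation) is a correct and harmless observation, though the paper does not dwell on it.
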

\begin{proof}
From Lemma \ref{LemmeTCPlinePoinca} and the fact $a \leq 1$,
\[\text{Ent}P_t f \leq c_2 W_t \leq c_2e^{- \frac{ (1-\delta)\theta - \frac{1}{ \beta}}{1+\beta c_1}t}\po 1 + \beta c_1\pf W_0 \leq c_2e^{- \frac{ (1-\delta)\theta - \frac{1}{ \beta}}{1+\beta c_1}t}\po 1 + \beta c_1\pf \mu (f')^2.\]
\end{proof}
Thus, to prove Proposition \ref{PropTCPLinea}, it only remains to prove a weighted log-Sobolev inequality holds. 
Let
\[\psi(x) = \int_0^x \frac{1}{\sqrt{a(y)}}dy.\]
It is a concave, non-decreasing, one-to-one function. If $Z$ is a random variable with law $\mu$ and $Y=\psi(Z)$, then
\begin{eqnarray*}
\E\po f^2(Z) \ln f^2(Z) \pf - \E\po f^2(Z) \pf\ln \E\po f^2(Z) \pf & \leq & c \E\po a(Z) (f')^2(Z)\pf\\
\Leftrightarrow\hspace{15pt}\E\po g^2\po Y\pf \ln g^2\po Y\pf  \pf - \E\po g^2\po Y\pf  \pf\ln \E\po g^2\po Y\pf  \pf & \leq & c \E\po (g')^2\po Y\pf \pf
\end{eqnarray*}
with $g(y) = f\po \psi^{-1}(y)\pf$. As a consequence we will study the Markov process $\psi(X) = \po\psi(X_t)\pf_{t\geq 0}$, where $X = \po X_t\pf_{t\geq 0}$ has generator \eqref{EqgenerateurTCP}, and prove a classical non-weighted log-Sobolev for the invariant measure of this twisted process, which will imply the weighted log-Sobolev assumed in Corollary \ref{CorTCPlineLogSob}.

\subsubsection{Confining operators for the twisted process}\label{Subsub2}

The jump kernel of $\psi(X)$ is
\[Q_\psi g(z) = g\po \psi\po \delta \psi^{-1}(z)\pf\pf.\]
Let $K_\psi$ and $\widetilde K_\psi$ be the operators defined in Section \ref{SectionEmbedded}  corresponding to the process $\psi(X)$.
\begin{lem}\label{LemKalphcont}
For all $g\in \A$,
\begin{eqnarray*}
|(Q_\psi g)'| & \leq & \sqrt\delta Q_\psi |g'|\\
|(K_\psi g)'| & \leq & K_\psi|g'|\\
|(\widetilde K_\psi g)'| & \leq & \widetilde K_\psi |g'|.
\end{eqnarray*}
\end{lem}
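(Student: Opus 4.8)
The plan is to establish each of the three sub-commutation inequalities by reducing the operators for the twisted process $\psi(X)$ to the already-proven facts about $Q$, $K$ and $\widetilde K$, using that $\psi$ is concave, non-decreasing and one-to-one with $\psi' = 1/\sqrt{a}$.

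First I would treat $Q_\psi$. Since $Q_\psi g(z) = g\po\psi\po\delta\psi^{-1}(z)\pf\pf$, the chain rule gives
\[(Q_\psi g)'(z) = g'\po\psi\po\delta\psi^{-1}(z)\pf\pf\cdot\psi'\po\delta\psi^{-1}(z)\pf\cdot\delta\cdot\po\psi^{-1}\pf'(z).\]
Writing $x=\psi^{-1}(z)$, one has $\po\psi^{-1}\pf'(z)=1/\psi'(x)=\sqrt{a(x)}$, so the prefactor is $\delta\sqrt{a(\delta x)}/\sqrt{a(x)}\cdot(\text{something})$; more carefully the factor multiplying $g'(\cdots)$ has absolute value $\delta\,\psi'(\delta x)/\psi'(x)=\delta\sqrt{a(x)/a(\delta x)}$. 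By the concavity bound $a(\delta x)\geq\delta a(x)$ (used already in Lemma \ref{LemmeTCPlinePoinca}) this is $\leq\delta\sqrt{a(x)/(\delta a(x))}=\sqrt\delta$, giving $|(Q_\psi g)'(z)|\leq\sqrt\delta\,|g'|\po\psi(\delta\psi^{-1}(z))\pf=\sqrt\delta\,Q_\psi|g'|(z)$ since $Q_\psi$ is a deterministic (Dirac) kernel.

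For $K_\psi$ and $\widetilde K_\psi$ I would invoke Lemmas \ref{LemDeriveK} and \ref{lemKtilde}. The twisted process $\psi(X)$ has drift $\tilde b(z)=\psi'(\psi^{-1}(z))\,b(\psi^{-1}(z))=1/\sqrt{a(\psi^{-1}(z))}>0$ and jump rate $\tilde\lambda(z)=\lambda(\psi^{-1}(z))=\psi^{-1}(z)$; one checks $\tilde\lambda$ is non-decreasing along the (positively-oriented) twisted flow because $\psi^{-1}$ is increasing and the flow moves $z$ upward. Hence Lemma \ref{LemDeriveK} applied to $\psi(X)$ yields directly $|\tilde b^*\na(K_\psi g)|\leq K_\psi|\tilde b^*\na g|$, i.e. $\tilde b(z)|(K_\psi g)'(z)|\leq K_\psi\po\tilde b|g'|\pf(z)$; since $\tilde b>0$ and $K_\psi$ preserves positivity this is equivalent to $|(K_\psi g)'|\leq K_\psi|g'|$ after dividing — but one must be slightly careful, as $\tilde b$ is not constant, so I would instead note that $\tilde b$ only enters as a positive weight and the inequality $|(K_\psi g)'(z)|\leq K_\psi|g'|(z)$ follows once we know the law $K_\psi(z)$ is the pushforward under a $1$-Lipschitz map (in the twisted coordinate) — which is exactly the content of Lemma \ref{LemRestrictLipschitz} together with the non-decreasing-rate hypothesis. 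The same argument with Lemma \ref{lemKtilde} and Lemma \ref{LemRestrictLipschitztilde} handles $\widetilde K_\psi$.

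The main obstacle is bookkeeping the change of variables: one must verify that ``non-decreasing along the flow'' is preserved under the twist (it is, since $\psi$ and $\psi^{-1}$ are increasing and the flow direction $\tilde b>0$ is unchanged), and that the weight $\tilde b$ appearing in Lemmas \ref{LemDeriveK}–\ref{lemKtilde} can be cleanly divided out; the cleanest route is probably to apply Lemmas \ref{LemRestrictLipschitz} and \ref{LemRestrictLipschitztilde} to $\psi(X)$ directly, obtaining that $K_\psi(z)$ and $\widetilde K_\psi(z)$ are $1$-Lipschitz images (in the $z$ variable) of the corresponding laws at a base point, from which the sub-commutations for $K_\psi,\widetilde K_\psi$ are immediate, while $Q_\psi$ is handled by the explicit concavity computation above.
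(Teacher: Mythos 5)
Your treatment of $Q_\psi$ is correct and matches the paper's: the chain rule plus the concavity bound $a(\delta x)\geq\delta a(x)$ gives the $\sqrt\delta$ factor.

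For $K_\psi$ and $\widetilde K_\psi$, however, there is a genuine gap. You correctly start from Lemmas \ref{LemDeriveK}--\ref{lemKtilde} applied to the twisted process, obtaining $b_\psi|(K_\psi g)'|\leq K_\psi\po b_\psi|g'|\pf$, and you correctly recognize that dividing by the non-constant weight $b_\psi$ is the sticking point. But your proposed fix — invoking Lemmas \ref{LemRestrictLipschitz} and \ref{LemRestrictLipschitztilde} to get ``$K_\psi(z)$ is a $1$-Lipschitz image of $K_\psi(0)$ in the $z$ variable, hence the sub-commutation is immediate'' — conflates two different Lipschitz properties. Lemma \ref{LemRestrictLipschitz} produces a $1$-Lipschitz map $G$ acting on the \emph{sample space} (it pushes forward the law of $T_x$ to the law of $T_{\varphi_x(t)}$); that is the ingredient for transferring a \emph{local Beckner inequality} from $K_\psi(0)$ to $K_\psi(z)$, and indeed the paper uses it for exactly that purpose in Lemma \ref{LemKalphlip}. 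It says nothing about the derivative of $z\mapsto K_\psi g(z)$ in the \emph{base point} $z$, which is what the sub-commutation requires. For a generic Markov kernel these two Lipschitz properties are independent, so the conclusion does not follow ``immediately'' from those lemmas.

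The paper's actual resolution of the division problem is a short but essential observation you skip: $b_\psi(z)=1/\sqrt{a\po\psi^{-1}(z)\pf}=\psi'\po\psi^{-1}(z)\pf$ is \emph{non-increasing} along the twisted flow (because $\psi$ is concave), while the support of $K_\psi(z)$ is $[z,\infty)$. Hence $b_\psi\leq b_\psi(z)$ on the support of $K_\psi(z)$, so $K_\psi\po b_\psi|g'|\pf(z)\leq b_\psi(z)\,K_\psi|g'|(z)$, and dividing both sides of the inequality from Lemma \ref{LemDeriveK} by $b_\psi(z)>0$ yields $|(K_\psi g)'|\leq K_\psi|g'|$. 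The same monotonicity-plus-support argument handles $\widetilde K_\psi$ via Lemma \ref{lemKtilde}. You should replace the appeal to Lemmas \ref{LemRestrictLipschitz}--\ref{LemRestrictLipschitztilde} with this step.
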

\begin{proof}
Recall $ a\po \delta x \pf \geq \delta a(x)$ for all $x\geq0$, and so
\begin{eqnarray*}
(Q_\psi g)'(z) & = & \delta \po\psi^ {-1}\pf'(z) \psi'\po \delta \psi^{-1}(z)\pf Q_\psi g'(z)\\
& = & \frac{\delta \po\psi^ {-1}\pf'(z)}{\sqrt{ a\po \delta \psi^{-1}(z)\pf}} Q_\psi g'(z)\\
& \leq & \frac{\sqrt \delta \po\psi^ {-1}\pf'(z)}{\sqrt{ a\po  \psi^{-1}(z)\pf}} Q_\psi| g'|(z)\\
 & = & \sqrt \delta Q_\psi |g'|(z).
\end{eqnarray*}
On the other hand the vector field associated to $\psi(X)$ is $b_\psi(z) = \frac{1}{\sqrt{a\po \psi^{-1}(z)\pf}}$, and the rate of jump is non-decreasing along the flow. Hence, according to Lemma \ref{LemDeriveK},
\begin{eqnarray*}
b_\psi |(K_\psi g)'| & \leq & K_\psi\po b_\psi |g'|\pf
\end{eqnarray*}
(and according to Lemma \ref{lemKtilde}, the same goes for $\widetilde K_\alpha$). Note that the support of both probability measure $K_\psi(z)$ and $\widetilde K_\psi(z)$ is $[z,\infty]$, and that $b_\psi$ is non-increasing along the flow, so that
\[|(K_\psi g)'|(z) \leq \frac{K_\psi\po b_\psi |g'|\pf (z)}{b_\psi(z)}\leq K_\psi\po  |g'|\pf(z)\]
(and the same goes for $\widetilde K_\psi$).
\end{proof}
\begin{lem}\label{LemKalphlip}
For any $z>0$, the law $K_\psi(z)$ (resp. $\widetilde K_\psi(z)$) can be obtained from $K_\psi(0)$ (resp. $\widetilde K_\psi(0)$) through a 1-Lipschitz transformation.
\end{lem}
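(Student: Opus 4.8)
The plan is to exhibit $K_\psi(z)$ as an explicit pushforward of $K_\psi(0)$ and to check that the transport map is a contraction, combining Lemma~\ref{LemRestrictLipschitz} (resp. Lemma~\ref{LemRestrictLipschitztilde}) for the untwisted process with the concavity of the weight $a$.

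First I fix $z>0$ and set $x=\psi^{-1}(z)>0$. Since $\psi$ conjugates the flow of $b$ with that of $b_\psi$ and $\psi(0)=0$, and since the untwisted flow is the translation $\varphi_y(t)=y+t$, one has $\varphi^\psi_z(t)=\psi(x+t)$; moreover $\psi$ being a bijection, $X$ and $\psi(X)$ have the same jump times, so the first jump time of $\psi(X)$ started from $z$ is the random variable $T_x$ (the first jump time of $X$ started from $x$). Hence $K_\psi f(z)=\E[f(\psi(x+T_x))]$, i.e. $K_\psi(z)$ is the law of $\psi(x+T_x)$, while $K_\psi(0)$ is the law of $\psi(T_0)$.

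Next, for the linear TCP $b\equiv 1$ and $\lambda(y)=y$ is non-decreasing along the flow, so Lemma~\ref{LemRestrictLipschitz} applies: the law of $T_x=T_{\varphi_0(x)}$ is the image of the law of $T_0$ by $g:=\Lambda_x^{-1}\circ\Lambda_0$, which is non-decreasing with $0\le g'\le 1$, and — from the computation inside the proof of that lemma, via relation \eqref{equationSurLambdamoinsUn} applied with base point $0$ and shift $x$ — one moreover has $x+g(s)\ge s$ for all $s\ge 0$. Therefore $\psi(x+T_x)\overset{dist}=G_z(\psi(T_0))$, where $G_z(w):=\psi\big(x+g(\psi^{-1}(w))\big)$, so $K_\psi(z)$ is the pushforward of $K_\psi(0)$ by $G_z$. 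It remains to see $G_z$ is $1$-Lipschitz: writing $s=\psi^{-1}(w)$ and using $\psi'(y)=1/\sqrt{a(y)}$, hence $(\psi^{-1})'(w)=\sqrt{a(s)}$, we get
\[ G_z'(w)=\frac{g'(s)\,\sqrt{a(s)}}{\sqrt{a\big(x+g(s)\big)}}\le 1, \]
since $g'(s)\le 1$ and, $a$ being non-decreasing and $x+g(s)\ge s$, $a(x+g(s))\ge a(s)$. As $G_z'\ge 0$ as well, $G_z$ is a non-decreasing $1$-Lipschitz map, proving the claim for $K_\psi$.

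The case of $\widetilde K_\psi$ is identical, replacing $T$ by $\widetilde T$ and Lemma~\ref{LemRestrictLipschitz} by Lemma~\ref{LemRestrictLipschitztilde}, which furnishes a non-decreasing $1$-Lipschitz $\widetilde g$ with $\widetilde T_x\overset{dist}=\widetilde g(\widetilde T_0)$; the bound $x+\widetilde g(s)\ge s$ follows from the same relation \eqref{equationSurLambdamoinsUn} applied to the non-decreasing surrogate rate $r$ built in the proof of Lemma~\ref{LemRestrictLipschitztilde}, and one concludes with the same computation for $\widetilde G_z(w)=\psi(x+\widetilde g(\psi^{-1}(w)))$. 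The only mildly delicate point is to realize that what is needed is not merely that $g$ (resp. $\widetilde g$) is $1$-Lipschitz but the extra inequality $x+g(s)\ge s$, which is extracted from the proofs of Lemmas~\ref{LemRestrictLipschitz}–\ref{LemRestrictLipschitztilde} rather than their statements; everything else is a routine change of variables.
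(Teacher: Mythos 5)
Your proof is correct and follows essentially the same route as the paper: reduce to $\psi(x+T_x)$ versus $\psi(T_0)$, invoke the $1$-Lipschitz map $g$ from Lemma~\ref{LemRestrictLipschitz} together with the inequality $x+g(s)\ge s$ extracted from its proof, and then use the concavity of $\psi$ (equivalently the monotonicity of $a$) to bound the derivative of the composite map $w\mapsto\psi(x+g(\psi^{-1}(w)))$ by $1$. The treatment of $\widetilde K_\psi$ via Lemma~\ref{LemRestrictLipschitztilde} is likewise identical to the paper's.
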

\begin{proof}
Let $T_x$ be the first time of jump of $X$ starting from $x$. According to Lemma \ref{LemRestrictLipschitz}, there exists a 1-Lipschitz functions $G$ such that $T_x \overset{dist}= G(T_0)$. Note that $K_\psi\po \psi(x)\pf$ is the law of $\psi\po x + T_x\pf$. Let $H(z) = \psi\po x + G\po \psi^{-1}(z)\pf\pf$, so that $\psi(x+T_x) \overset{dist}= H\po \psi(T_0)\pf$. We compute
\begin{eqnarray*}
| H'(z)| & = & |G'\po \psi^{-1}(z)\pf\po\psi^{-1}\pf'(z)\psi'\po x + G\po \psi^{-1}(z)\pf\pf|\\
& \leq & \frac{\psi'\po x + G\po \psi^{-1}(z)\pf\pf}{\psi'\po\psi^{-1}(z)\pf}. 
\end{eqnarray*}
Now $\psi$ is concave, and in the proof of Lemma \ref{LemRestrictLipschitz} we have seen that $x + G(s) \geq s$ for all $s\geq 0$; hence $|H'(z)|\leq 1$ for all $z\geq 0$.

Similarly, let $\widetilde T_x$ be a random variable on $\R_+$ with density $\frac{e^{-\int_0^t (x+s) ds}}{\int_0^\infty e^{-\int_0^u (x+s) ds}du}$, so that $\widetilde K_\psi\po \psi(x)\pf$ is the law of $\psi\po x + \widetilde T_x\pf$.  From Lemma \ref{LemRestrictLipschitztilde} there exists a 1-Lipschitz functions $\widetilde G$ such that $\widetilde T_x \overset{dist}=  \widetilde G( \widetilde T_0)$, and the previous argument concludes.
\end{proof}

\begin{lem}\label{LemKalph0}
Both $K_\psi(0)$ and $\widetilde K_\psi(0)$ satisfies $\mathcal B(1,4)$.
\end{lem}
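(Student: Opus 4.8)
The plan is to identify both measures explicitly and then apply a Bakry--Émery-type curvature bound to the densities obtained by pushing forward through $\psi$.

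\emph{Identification.} Jump times are unaffected by the state-space change $\psi$, so for the process $\psi(X)$ started at $0$ the integrated jump rate is $\Lambda_0(t)=\int_0^t s\,ds=t^2/2$ and the flow from $0$ is $t\mapsto\psi(t)$; hence, as is already implicit in the proof of Lemma~\ref{LemKalphlip} with $x=0$, $K_\psi(0)$ is the law of $\psi(T_0)$ with $T_0$ of density $t\,e^{-t^2/2}$ on $\R_+$, and $\widetilde K_\psi(0)$ is the law of $\psi(\widetilde T_0)$ with $\widetilde T_0$ of density $\sqrt{2/\pi}\,e^{-t^2/2}$ on $\R_+$. Since $\psi'(t)=(1-e^{-t})^{-1/2}\geq 1$, $\psi$ is an increasing bijection of $\R_+$ and the change of variable $z=\psi(w)$, $w=\psi^{-1}(z)$, is licit; the Jacobian of $\psi^{-1}$ being $1/\psi'(w)=\sqrt{1-e^{-w}}$, the two laws have, on $\R_+$, densities $\rho(z)\propto w\,e^{-w^2/2}\sqrt{1-e^{-w}}$ and $\tilde\rho(z)\propto e^{-w^2/2}\sqrt{1-e^{-w}}$.

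\emph{Curvature.} A probability measure $e^{-\Phi}$ on a half-line with $\Phi\in\mathcal C^2$ and $\Phi''\geq\kappa>0$ satisfies $\mathcal B(1,2/\kappa)$; here both densities are smooth and positive on $(0,\infty)$ and vanish at the origin (like $z^3$, resp.\ like $z$), so $0$ is an entrance point and there is no boundary contribution, consistently with the formal framework adopted throughout the paper. It thus suffices to bound $(-\log\rho)''$ and $(-\log\tilde\rho)''$ below by $1/2$. Differentiating twice with the chain rule and $dw/dz=\sqrt{1-e^{-w}}$, one gets $(-\log\tilde\rho)''(z)=(1-e^{-w})+\tfrac12 w e^{-w}+\frac{e^{-w}(2-e^{-w})}{4(1-e^{-w})}$, while $(-\log\rho)''(z)$ is this same quantity plus $\frac{1-e^{-w}}{w^2}-\frac{e^{-w}}{2w}$, which is nonnegative because $2(e^{w}-1)\geq w$. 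Each of the three terms of $(-\log\tilde\rho)''$ is nonnegative, so discarding the middle one the bound $(-\log\tilde\rho)''\geq\tfrac12$ reduces, after the substitution $s=e^{-w}\in(0,1)$ and clearing the positive factor $4(1-s)$, to $3s^2-4s+2\geq 0$, which is true since its discriminant is negative. Hence $(-\log\rho)''\geq(-\log\tilde\rho)''\geq\tfrac12$, and both $K_\psi(0)$ and $\widetilde K_\psi(0)$ satisfy $\mathcal B(1,4)$ (the computation actually yields $\mathcal B(1,2)$, but only $\mathcal B(1,4)$ is used in Corollary~\ref{CorTCPlineLogSob}).

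The only slightly delicate step is the bookkeeping of the change of variables — keeping track of the exact power of $\sqrt{1-e^{-w}}$ appearing in the densities and of the chain-rule factor $dw/dz$ through the two differentiations — after which the required curvature estimate collapses to the elementary quadratic inequality above.
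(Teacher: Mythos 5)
Your proof is correct and essentially matches the paper's: you identify both laws as $\psi$-pushforwards of the jump-time laws, compute the twisted log-densities, and bound the second derivative below by $\tfrac12$ to invoke Bakry--\'Emery (the paper's own term-discarding is slightly cleaner, keeping only $a-\tfrac12 a''=1-\tfrac12 e^{-w}\geq\tfrac12$, but your quadratic $3s^2-4s+2\geq0$ works just as well). One small slip: the parenthetical claim that the computation "actually yields $\mathcal B(1,2)$" is wrong---with $\Phi''\geq\tfrac12$ the Bakry--\'Emery criterion gives constant $2/(1/2)=4$, so this argument delivers exactly $\mathcal B(1,4)$ and nothing sharper.
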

\begin{proof}
If  $T_0$ is the first time of jump starting from 0 then $K_\psi(0)$ is the law of $\psi(T_0)$. For any $f\in\A$,
\begin{eqnarray*}
K_\psi f(0) & = & \int_0^\infty f\po \psi(u)\pf u e^{-\frac{u^2}2}du\\
& = & \int_0^\infty f\po z\pf  e^{-\frac{\po\psi^{-1}(z)\pf^2}2 + \ln \psi^{-1}(z) + \frac12 \ln\po a\po \psi^{-1}(z)\pf \pf}dz.
\end{eqnarray*}
On the other hand, if $N$ is a standard gaussian variable then $\widetilde K_\psi(0)$ is the law of $\psi\po|N|\pf$, and for all $f\in\A$
\begin{eqnarray*}
\widetilde  K_\psi f(0) & = & \int_0^\infty f\po \psi(u)\pf \po\frac\pi2\pf^{-\frac12}e^{-\frac{u^2}2}du\\
& = & \int_0^\infty f\po z\pf  e^{-\frac{\po\psi^{-1}(z)\pf^2}2 + \frac12 \ln\po a\po \psi^{-1}(z)\pf \pf - \frac12 \ln\po\frac\pi2\pf}dz.
\end{eqnarray*}
For $\varepsilon\in\{0,1\}$, let $V_\varepsilon(z) = \frac12\po\psi^{-1}(z)\pf^2 - \varepsilon \ln \psi^{-1}(z) - \frac12 \ln\po a\po \psi^{-1}(z)\pf \pf$; we want to prove $V_\varepsilon$ is strictly convex. Writing $x = \psi^{-1}(z)$, we compute $\partial_z(x) = \sqrt{ a(x)}$ and
\begin{eqnarray*}
V'_\varepsilon(z) & = & \sqrt{a(x)}\po x - \frac{\varepsilon}{x} - \frac{a'(x)}{2a(x)}\pf\\
V_\varepsilon''(z) & = & \frac{a'(x)}{2}\po x - \frac{\varepsilon}{x} - \frac{a'(x)}{2a(x)}\pf + a(x) \po 1 + \frac{\varepsilon}{x^2} - \frac{ a''(x)}{2 a(x)} + \frac12\po\frac{ a'(x)}{a(x)}\pf^2\pf \\
& = & \varepsilon\po \frac{ a(x)}{x^2}-\frac{ a'(x)}{2x}\pf  + \frac{a'(x)x}{2} + \frac{\po a'(x)\pf^2}{4 a(x)} +  a(x) - \frac12 a''(x).
\end{eqnarray*}
As a first step, note that $V_1''(z) \geq V_0''(z)$: indeed, $V_1''(z)-V_0''(z)= \frac{j(x)}{x^2}$ with
\begin{eqnarray*}
j(y) & = & a(y) - \frac{y}{2}a'(y)\\
\Rightarrow\hspace{15pt} j'(y) & = & \frac12 a'(y) - \frac y2 a''(y) \ > \ 0
\end{eqnarray*}
(since $a$ is non-decreasing and concave). Since $j(0) = 0$, it implies $j(y) \geq 0$ for all $y\geq 0$, in other words $V_1''(z) \geq V_0''(z)$. On the other hand, 
\begin{eqnarray*}
V_0''\po z\pf & \geq & a(x) - \frac12 a''(x)\\
& \geq & \frac12.
\end{eqnarray*}
As a consquence, both $K_\psi(0)$ and $\widetilde K_\psi(0)$ satisfies $\mathcal B(1,4)$ (see for instance \cite[Theorem 5.4.7]{Logsob}, applied to the diffusion with generator $\x^2 - V_\varepsilon \x$).
\end{proof}

To sum up the consequences of the previous results,
\begin{cor}\label{CorAlpha}\
\begin{enumerate}
\item The operators $K_\psi$ and $\widetilde K_\psi$ are $(4,1,1)$-confining and the operator $Q_\psi$ is $(0,\sqrt \delta, 1)$-contractive.
\item The invariant measure $\nu_\psi$ of the embedded chain associated to $\psi(X)$ satisfies $\mathcal B\po 1, \frac{4\sqrt \delta}{1 - \sqrt \delta }\pf$.
\end{enumerate}
\end{cor}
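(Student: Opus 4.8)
The plan is to assemble Lemmas~\ref{LemKalphcont}, \ref{LemKalphlip} and \ref{LemKalph0} and feed them into Lemma~\ref{LemConfine}; no genuinely new computation is needed.

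First I would establish the sub-commutations \eqref{EquaConfineSubComm} (in the case $p=1$) by one and the same argument for all three kernels. Suppose a Markov kernel $H$ fixing $\A$ satisfies $|\po Hg\pf'|\leq\gamma_0\,H|g'|$ for every $g\in\A$; this is exactly what Lemma~\ref{LemKalphcont} provides, with $\gamma_0=1$ for $K_\psi$ and $\widetilde K_\psi$ and $\gamma_0=\sqrt\delta$ for $Q_\psi$. Applying this to $g=f^2\in\A$ and then the Cauchy--Schwarz inequality for $H$ (which is an equality when $H$ is the Dirac kernel $Q_\psi$) gives
\[|\na\po Hf^2\pf^{1/2}| \ = \ \frac{|\po Hf^2\pf'|}{2\po Hf^2\pf^{1/2}} \ \leq \ \gamma_0\po H|\na f|^2\pf^{1/2},\]
that is, \eqref{EquaConfineSubComm} with $\gamma=\gamma_0^2$; so $\gamma=1$ for $K_\psi$ and $\widetilde K_\psi$, and $\gamma=\delta\leq\sqrt\delta$ for $Q_\psi$. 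For the local inequalities: $Q_\psi(z)$ is a Dirac mass, so \eqref{EquaConfineLocalB1} holds with $c=0$; and for $K_\psi$ (resp.\ $\widetilde K_\psi$), Lemma~\ref{LemKalphlip} exhibits $K_\psi(z)$ (resp.\ $\widetilde K_\psi(z)$) as the push-forward of $K_\psi(0)$ (resp.\ $\widetilde K_\psi(0)$) by a $1$-Lipschitz map, while Lemma~\ref{LemKalph0} says the latter satisfies $\mathcal B(1,4)$; since a log-Sobolev inequality passes to $1$-Lipschitz push-forwards without change of constant, $K_\psi(z)$ and $\widetilde K_\psi(z)$ satisfy $\mathcal B(1,4)$ for every $z$, which is \eqref{EquaConfineLocalB1} with $c=4$. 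This yields the first point: $K_\psi,\widetilde K_\psi$ are $(4,1,1)$-confining and $Q_\psi$ is $(0,\sqrt\delta,1)$-contractive.

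For the second point, recall from Section~\ref{SectionEmbedded} that the transition operator of the embedded chain of $\psi(X)$ is $P_\psi=K_\psi Q_\psi$. Lemma~\ref{LemConfine}(1) with $H_1=K_\psi$ $(c_1=4,\gamma_1=1)$ and $H_2=Q_\psi$ $(c_2=0,\gamma_2=\sqrt\delta)$ then shows $P_\psi$ is $\po c_2+\gamma_2 c_1,\gamma_1\gamma_2,1\pf=\po 4\sqrt\delta,\sqrt\delta,1\pf$-confining, hence $(4\sqrt\delta,\sqrt\delta,1)$-contractive since $\sqrt\delta<1$. The chain generated by $P_\psi$ is $\po\psi(Z_k)\pf_{k\geq0}$, the image under the homeomorphism $\psi$ of the embedded chain $\po Z_k\pf$ of the linear-rate TCP, which is ergodic (cf.~\cite{Dumas}); consequently $\po\psi(Z_k)\pf$ is ergodic, with invariant law $\psi_*\mu_e=\nu_\psi$. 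Lemma~\ref{LemConfine}(3) then gives $\mathcal B\po 1,\,4\sqrt\delta(1-\sqrt\delta)^{-1}\pf$ for $\nu_\psi$, which is the claim.

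The only step requiring a little care is the sub-commutation for $K_\psi$ and $\widetilde K_\psi$, i.e.\ passing from the ``derivative'' commutation of Lemma~\ref{LemKalphcont} to the log-Sobolev-type form \eqref{EquaConfineSubComm}: this is precisely the $p=1$ manipulation already performed inside the proof of Lemma~\ref{LemConfine}, and it uses nothing but Cauchy--Schwarz. Coupled with the standard stability of the log-Sobolev inequality under $1$-Lipschitz push-forwards, it is exactly what makes Lemmas~\ref{LemKalphlip} and \ref{LemKalph0} deliver the local inequality \eqref{EquaConfineLocalB1}; everything else is a direct substitution into Lemma~\ref{LemConfine}.
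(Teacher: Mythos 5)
Your proof is correct and takes the same route as the paper: invoke Lemmas \ref{LemKalphcont}, \ref{LemKalphlip} and \ref{LemKalph0} for the sub-commutation and local log-Sobolev inequalities, then compose via Lemma \ref{LemConfine} to get the confining property of $P_\psi = K_\psi Q_\psi$ and the Beckner inequality for $\nu_\psi$. You are slightly more explicit than the paper in deriving the $p=1$ sub-commutation \eqref{EquaConfineSubComm} from the derivative estimate $|(Hg)'|\leq\gamma_0 H|g'|$ via Cauchy--Schwarz (and in noting that this actually yields $\gamma=\delta$ for $Q_\psi$, a tiny bit sharper than the stated $\sqrt\delta$), and in spelling out the ergodicity of $(\psi(Z_k))$ needed for Lemma \ref{LemConfine}(3) -- both points are left implicit in the paper's proof.
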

\begin{proof}
The sub-commutation property has been showed in Lemma \ref{LemKalphcont}, and the local inequality is clear for $Q_\psi$ which is deterministic, and is a consequence of Lemma \ref{LemKalphlip} and \ref{LemKalph0} for $K_\psi$ and $\widetilde K_\psi$.

From Lemma \ref{LemConfine}, the transition operator of the embedded chain associated to $\psi(X)$, $P_\psi = K_\psi Q_\psi$, is $(4 \sqrt \delta, \sqrt \delta, 1)$-confining, conclusion follows again from Lemma \ref{LemConfine}.
\end{proof}

\subsubsection{Perturbation and conclusion}\label{Subsub3}

The last step of our procedure is the study of a perturbation of $\nu_\psi$. Since the rate of jump of $Z = \psi(X)$ at point $z$ is $\lambda_\psi(z) = \psi^{-1}(z)$ and the operator $K_\psi$ is such that $K_\psi f\po\psi(x)\pf = \E\po f \po \psi(x+ T_x)\pf\pf$, according to Lemma \ref{lemMueMu}, the invariant measure $\mu_\psi$ of $Z$ is the pertrubation of $\nu_\psi$ by the function $g$ defined by
\begin{eqnarray*}
g\po \psi(x)\pf & = & K_\psi\po \frac{1}{\lambda_\psi}\pf\po \psi(x)\pf\\
& = & \E\po \frac1{x+ T_x}\pf
\end{eqnarray*}
\begin{lem}\label{Lemg}
The function $g$ is decreasing, and $\ln g$ is $\sqrt{\frac{2}{\pi}}$-Lipschitz.
\end{lem}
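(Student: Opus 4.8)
\emph{Approach.} The first move is to make $g$ completely explicit. For the linear-rate TCP the flow is the translation $\varphi_x(t)=x+t$ and $\lambda(y)=y$, so the first jump time $T_x$ of $X$ started at $x$ has density $(x+t)\,e^{-xt-t^2/2}$ on $\R_+$. Substituting this into $g\po\psi(x)\pf=\E\po\frac1{x+T_x}\pf$ the factor $x+t$ cancels, and one gets the closed form
\[g\po\psi(x)\pf \ = \ G(x) \ := \ \int_0^\infty e^{-xt-t^2/2}\,dt,\]
which (up to a constant) is the Gaussian Mills ratio $e^{x^2/2}\int_x^\infty e^{-s^2/2}\,ds$. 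From here everything is one-variable calculus on $G$. Monotonicity of $g$ is then immediate: $G'(x)=-\int_0^\infty t\,e^{-xt-t^2/2}\,dt<0$ and $\psi^{-1}$ is increasing, so $g=G\circ\psi^{-1}$ is decreasing.

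\emph{Reduction of the Lipschitz bound.} I would differentiate $\ln g$. Since $\psi'(x)=1/\sqrt{a(x)}$ one has $\po\psi^{-1}\pf'(z)=\sqrt{a\po\psi^{-1}(z)\pf}$, so with $x=\psi^{-1}(z)$ and using $G'(x)=xG(x)-1$ (an integration by parts in the integral defining $G$),
\[\po\ln g\pf'(z) \ = \ \frac{G'(x)}{G(x)}\,\sqrt{a(x)} \ = \ -\rho(x)\,\sqrt{a(x)},\qquad \rho(x):=\frac1{G(x)}-x\ \ge\ 0,\]
the non-negativity of $\rho$ coming from $G(x)<\int_0^\infty e^{-xt}\,dt=1/x$, together with $\rho(0)=1/G(0)=\sqrt{2/\pi}$. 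Because $0\le a(x)=1-e^{-x}\le1$, it suffices to show $\rho(x)\le\rho(0)=\sqrt{2/\pi}$ for every $x\ge0$, and the natural way is to prove $\rho$ is non-increasing: differentiating, $\rho'(x)=\big(1-xG(x)-G(x)^2\big)/G(x)^2$, so the whole argument hinges on the single inequality $G(x)^2+xG(x)\ge1$ for $x\ge0$.

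\emph{The main point.} This is a classical lower bound for the Mills ratio (equivalently $G(x)\ge 2/(x+\sqrt{x^2+4})$) and could simply be quoted, but I would prove it by hand to keep the section self-contained. Expanding $G(x)^2$ as an integral over $(0,\infty)^2$, substituting $u=s+t$ (so $s$ ranges over $[0,u]$) and completing the square in the resulting inner Gaussian integral gives
\[G(x)^2 \ = \ \int_0^\infty e^{-xu}\Big(2e^{-u^2/4}\!\int_0^{u/2}e^{-r^2}\,dr\Big)du,\qquad 1-xG(x)\ =\ \int_0^\infty u\,e^{-xu-u^2/2}\,du,\]
the second identity again by integration by parts. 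Subtracting, $G(x)^2+xG(x)-1=\int_0^\infty e^{-xu-u^2/2}\,\chi(u)\,du$ with $\chi(u)=2e^{u^2/4}\int_0^{u/2}e^{-r^2}\,dr-u$; and $\chi(0)=0$ while $\chi'(u)=u\,e^{u^2/4}\int_0^{u/2}e^{-r^2}\,dr\ge0$, so $\chi\ge0$ on $\R_+$ and the integral is non-negative. Hence $\rho'\le0$, and therefore $|\po\ln g\pf'(z)|=\rho(x)\sqrt{a(x)}\le\rho(0)=\sqrt{2/\pi}$, which is the assertion.

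All the steps are short; the only one that is not a one-line computation is the inequality $G(x)^2+xG(x)\ge1$, and inside it the place where care is genuinely needed is the change of variables in the double integral (keeping track of the range $s\in[0,u]$ and performing the square completion correctly). Everything else — $G$ decreasing, $\rho\ge0$, $\rho(0)=\sqrt{2/\pi}$, the identities $G'=xG-1$ and $1-xG=\int u e^{-xu-u^2/2}du$ — is routine.
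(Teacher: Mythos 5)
Your proof is correct and follows the same overall strategy as the paper: make $g=G\circ\psi^{-1}$ explicit with $G(x)=\int_0^\infty e^{-xt-t^2/2}\,dt$, use $a\le1$ and the chain rule to reduce the Lipschitz bound to showing $0\ge(\ln G)'\ge(\ln G)'(0)=-\sqrt{2/\pi}$, and establish the lower bound by proving $(\ln G)'$ is non-decreasing. Where you genuinely add value is the log-convexity step. The paper justifies the monotonicity of $(\ln G)'$ only by saying ``since $h'<0$ and $h''>0$'', which is not by itself sufficient (those sign conditions do not imply $h''h\ge(h')^2$). Your reduction of $(\ln G)''\ge0$ to the Mills-ratio inequality $G(x)^2+xG(x)\ge1$ is exactly equivalent (indeed, using $G'=xG-1$ and $G''=(1+x^2)G-x$ one checks $(\ln G)''\,G^2=G^2+xG-1$), and your two-page proof of that inequality by the double-integral change of variables is correct. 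It is, however, heavier than necessary: since $G^{(n)}(x)=(-1)^n\int_0^\infty t^n e^{-xt-t^2/2}\,dt$, Cauchy--Schwarz for the measure $e^{-xt-t^2/2}\,dt$ on $\R_+$ gives immediately $(G')^2\le G\,G''$, i.e.\ $G$ is log-convex, with no need for the explicit Mills-ratio computation. So: same route as the paper, with the one glossed-over step filled in, at the cost of a somewhat more laborious argument than the Cauchy--Schwarz shortcut.
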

\begin{proof}
Let 
\[h(x) = \E\po \frac1{x+ T_x}\pf  =  \int_0^\infty e^{-\int_0^t (x+u) du}dt = \int_0^\infty e^{-\frac{t^2}2 - xt}dt,\]
so that $g(z) = h\po \psi^{-1}(z)\pf$. Since $h$ is decreasing and $\psi^{-1}$ is increasing, $g$ is decreasing. Moreover, as $|\po\ln g\pf'(z)| = \sqrt{a \po \psi^{-1}(s)\pf} |(\ln h)'\po \psi^{-1}(z)\pf|$ and $a \leq 1$, it is sufficient to prove $\ln h$ is $\sqrt{\frac{2}{\pi}}$-Lipschitz. Since $h'< 0$ and $h''> 0$, $(\ln h)'$ is negative and increasing: for all $x\geq 0$,
\[0 \hspace{10pt} \geq \hspace{10pt} (\ln h)'(x) \hspace{10pt} \geq \hspace{10pt} \frac{h'(0)}{h(0)} \hspace{10pt} = \hspace{10pt} - \sqrt{\frac{2}{\pi}}.\]
\end{proof}
To apply to $\nu_\psi$ and $g$ the perturbation Lemma \ref{lemPerturbMonotone} of the Appendix, we need to bound $g(m_\psi)$, where $m_\psi$ is the median of $\nu_\psi$, and $\nu_\psi(g^{-1})$. In fact, note that $\nu_\psi$, which is the invariant measure of the embedded chain associated to the process $\psi(X)$, is also the image through the function $\psi$ of $\mu_e$ the invariant measure of the embedded chain associated to the initial process $X$. In particular if $m_e$ is the median of $\mu_e$ then $m_\psi = \psi\po m_e\pf$. Keeping the notation $h(x) = g\po \psi(x)\pf$, we have $g(m_\psi) = h(m_e)$ and $\nu_\psi (g^{-1}) = \mu_e(h^{-1})$.
\begin{lem}\label{LemEstimhnu}
We have
\begin{eqnarray*}
\max\po \frac{h(0)}{h(m_e)}, \mu_e \po h^{-1}\pf \pf & \leq & 3\po 1 + \frac{\delta }{\sqrt{1-\delta^2}}\pf.
\end{eqnarray*}
\end{lem}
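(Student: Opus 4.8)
The plan is to read off the embedded chain of $X$ explicitly, bound the second moment (hence the median) of $\mu_e$, and combine this with a sharp affine upper bound for $1/h$. For the linear rate $\lambda(x)=x$ the flow is $\varphi_x(t)=x+t$, so $\Lambda_x(t)=xt+t^2/2$ and, as recalled in Section~\ref{SectionEmbedded}, $T_x\overset{dist}=\Lambda_x^{-1}(E)$ with $E$ a standard exponential variable; solving the quadratic gives $(x+T_x)^2=x^2+2E$. Since the embedded chain satisfies $Z_{k+1}=\delta\po Z_k+T_{Z_k}\pf$, this yields $Z_{k+1}^2\overset{dist}=\delta^2\po Z_k^2+2E_k\pf$ for i.i.d. standard exponentials $E_k$, hence under $\mu_e$ one has $Z_\infty^2\overset{dist}=2\delta^2\sum_{j\ge 0}\delta^{2j}E_j$ — in particular all moments of $\mu_e$ are finite — and, taking expectations in the fixed-point relation, $\mu_e\po Z^2\pf=\frac{2\delta^2}{1-\delta^2}$. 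Markov's inequality applied to $Z^2$ then gives $m_e\le\sqrt{2\,\mu_e\po Z^2\pf}=\frac{2\delta}{\sqrt{1-\delta^2}}$, and the Cauchy--Schwarz inequality gives $\mu_e(Z)\le\sqrt{\mu_e\po Z^2\pf}=\frac{\sqrt 2\,\delta}{\sqrt{1-\delta^2}}$.

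Next I would establish the pointwise bound $h^{-1}(x)\le x+\sqrt{2/\pi}$ for all $x\ge 0$. Recall from the proof of Lemma~\ref{Lemg} that $h(x)=\int_0^\infty e^{-t^2/2-xt}\,dt$, so that $h(0)=\sqrt{\pi/2}$. Integrating the identity $\frac{d}{dt}e^{-t^2/2-xt}=-(t+x)e^{-t^2/2-xt}$ over $\R_+$ gives $\int_0^\infty(t+x)e^{-t^2/2-xt}\,dt=1$; dividing by $h(x)$ yields $\frac1{h(x)}=x+\int_0^\infty t\,\rho_x(dt)$, where $\rho_x$ is the probability measure proportional to $e^{-t^2/2-xt}$ on $\R_+$. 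The density ratio $\rho_x/\rho_0$ is proportional to $e^{-xt}$, hence non-increasing in $t$, so $\rho_x$ is stochastically dominated by the half-normal law $\rho_0$ and $\int_0^\infty t\,\rho_x(dt)\le\int_0^\infty t\,\rho_0(dt)=\sqrt{2/\pi}$ (equivalently, this is the Chebyshev correlation inequality applied to the non-decreasing map $t\mapsto t$ and the non-increasing map $t\mapsto e^{-xt}$ against $\rho_0$). This gives $h^{-1}(x)=\frac1{h(x)}\le x+\sqrt{2/\pi}$.

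Finally, since $h$ is non-increasing, $\frac{h(0)}{h(m_e)}=h(0)\,h^{-1}(m_e)\le\sqrt{\pi/2}\,\po m_e+\sqrt{2/\pi}\pf=\sqrt{\pi/2}\,m_e+1\le\frac{\sqrt{2\pi}\,\delta}{\sqrt{1-\delta^2}}+1\le 3\po1+\frac{\delta}{\sqrt{1-\delta^2}}\pf$, using $m_e\le\frac{2\delta}{\sqrt{1-\delta^2}}$ and $\sqrt{2\pi}<3$; likewise $\mu_e(h^{-1})\le\mu_e(Z)+\sqrt{2/\pi}\le\frac{\sqrt 2\,\delta}{\sqrt{1-\delta^2}}+\sqrt{2/\pi}\le 3\po1+\frac{\delta}{\sqrt{1-\delta^2}}\pf$, since $\sqrt 2<3$ and $\sqrt{2/\pi}<3$. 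Taking the maximum of the two bounds yields the statement.

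The one genuinely delicate step is the affine bound on $1/h$ in the second paragraph: the Lipschitz estimate on $\ln h$ from Lemma~\ref{Lemg} only yields the exponential control $h^{-1}(x)\le\sqrt{2/\pi}\,e^{\sqrt{2/\pi}\,x}$, which is too weak once $\delta$ approaches $1$, whereas the Mills-ratio inequality $h(x)\ge x/(x^2+1)$ degenerates near $x=0$; it is precisely the monotone-likelihood-ratio (or correlation-inequality) argument that produces a bound affine in $x$ valid uniformly on $\R_+$ and makes the constant $3$ come out. Everything else is routine bookkeeping with the constants.
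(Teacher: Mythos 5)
Your proof is correct, and it follows the paper's overall plan — compute the second moment $\mu_e(Z^2)=\frac{2\delta^2}{1-\delta^2}$ from the fixed-point relation $Z^2\overset{dist}=\delta^2(Z^2+2E)$, bound the median of $\mu_e$ by Markov's inequality, and then combine a moment bound with a pointwise affine upper bound on $h^{-1}$. Incidentally, you apply Markov correctly and get $m_e\le\frac{2\delta}{\sqrt{1-\delta^2}}$; the paper states $\mathbb P(Y\ge t)\le\frac{\delta^2}{(1-\delta^2)t^2}$ (an apparent missing factor $2$) and concludes $m_e\le\frac{\sqrt 2\,\delta}{\sqrt{1-\delta^2}}$, but both estimates suffice for the final arithmetic.

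Where you genuinely diverge is in how the affine control of $h^{-1}$ is obtained. The paper's proof is the cruder truncation estimate
\[
h(x)=\E\po \frac{1}{x+T_x}\pf\ \ge\ \frac{\mathbb P(T_x\le 2)}{x+2}\ \ge\ \frac{\mathbb P(T_0\le 2)}{x+2}\ =\ \frac{1-e^{-2}}{x+2},
\]
using the fact that $T_x=\sqrt{x^2+2E}-x$ is stochastically decreasing in $x$; this gives $h^{-1}(x)\le\frac{x+2}{1-e^{-2}}$. You instead integrate the identity $\frac{d}{dt}e^{-t^2/2-xt}=-(t+x)e^{-t^2/2-xt}$ to get $h^{-1}(x)=x+\E_{\rho_x}(t)$ with $\rho_x\propto e^{-t^2/2-xt}$ on $\R_+$, and then use the monotone likelihood ratio $\rho_x/\rho_0\propto e^{-xt}$ (a Chebyshev correlation / stochastic dominance argument) to conclude $\E_{\rho_x}(t)\le\E_{\rho_0}(t)=\sqrt{2/\pi}$. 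This yields the sharper $h^{-1}(x)\le x+\sqrt{2/\pi}$, whereas the paper's bound is roughly $1.16\,x+2.31$. Your route is a bit slicker and gives tighter constants; the paper's is more elementary and requires nothing beyond the monotonicity of $T_x$ in $x$, which it already has at hand from Lemma~\ref{LemRestrictLipschitz}. Either way the target constant $3$ is comfortably met, and your diagnosis that this affine bound (not the $\kappa$-Lipschitz estimate on $\ln h$, nor a Mills-ratio bound) is the essential step is exactly right.
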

\begin{proof}
Recall that, keeping the notations of Section \ref{SectionEmbedded}, if $T_x$ is the first time of jump of the process starting from $x$ and $E$ is a standard exponential variable, then $T_x \overset{dist}= \Lambda_x^{-1}\po E\pf$. In the present case $\Lambda_x(t) = \int_0^t (x+u)du$, so that $T_x \overset{dist}= \sqrt{x^2 + 2E} - x$. In particular if $Y$ is a random variable with measure $\mu_e$, $Y \overset{dist}= \delta \sqrt{Y^2 + 2E}$, 
 so that
\[(1-\delta^2)\E\po Y^2 \pf = 2\delta^2 \E(E) = 2\delta^2.\]
From this,
\[\mathbb P\po Y \geq t \pf \ \leq\ \frac{\delta^2}{(1-\delta^2) t^2},\]
which implies
\[m_e \ \leq\ \frac{\sqrt 2\delta }{\sqrt{1-\delta^2}}. \]
Moreover
\[h(x) \hspace{7pt} = \hspace{7pt} \E\po \frac{1}{x + T_x }\pf \hspace{7pt} \geq \hspace{7pt} \frac{1}{x+2} \mathbb P\po T_x \leq 2 \pf \hspace{7pt} \geq \hspace{7pt}  \frac{1}{x+2} \mathbb P\po T_0 \leq 2 \pf \hspace{7pt} = \hspace{7pt} \frac{1}{x+2}\po 1- e^{-2}\pf.\]
Hence
\[\frac{h(0)}{h(m_e)} \hspace{10pt} \leq \hspace{10pt}  \sqrt{\frac{\pi}{2}}\times\frac{ m_e + 2}{1- e^{-2}} \hspace{10pt}  \leq \hspace{10pt}  3\po 1 + \frac{\delta }{\sqrt{1-\delta^2}}\pf.\]
Finally, if $Y$ is a random variable with law $\mu_e$,
\[ \mu_e \po h^{-1} \pf \hspace{10pt}  = \hspace{10pt} \E \po \frac{1}{h(Y)} \pf \hspace{10pt} \leq \hspace{10pt} \frac1{1-e^{-2}} \po 2 + \sqrt{\E\po Y^2\pf}\pf \hspace{10pt}  \leq \hspace{10pt}  3\po 1 + \frac{\delta }{\sqrt{1-\delta^2}}\pf.\]
\end{proof}

We can now bring the pieces together.

\begin{proof}[proof of Proposition \ref{PropTCPLinea}]
We have proved in Corollary \ref{CorAlpha} that $\nu_\psi$ satisfies a log-Sobolev inequality. From Lemma \ref{Lemg}, \ref{LemEstimhnu} and \ref{lemPerturbMonotone}, the perturbation $\nu_g$ of $\nu_\psi$ defined by $\nu_g f = \frac{1}{\nu_\psi(g)}\nu_\psi\po g f \pf$ also satisfies such an inequality. From Lemma \ref{lemMueMu}, the invariant measure of $\psi(X)$ is $ \nu_g \widetilde K_\psi$, and it also satisfies a log-Sobolev inequality since $\widetilde K_\psi$ is confining (Corollary \ref{CorAlpha}). It means $\mu$, the invariant measure of $X$, satisfies a weighted log-Sobolev inequality
\[\mu \po f^2\ln f^2\pf - \mu (f^2) \ln \mu( f^2)\ \leq \ c \mu\po a |f'|^2\pf.\]
The conditions of Corollary \ref{CorTCPlineLogSob} are fulfilled, and Proposition \ref{PropTCPLinea} is proved.
\end{proof}

\section*{Appendix}
\subsection*{Monotonous perturbation on the half-line}

Let $\nu$ be a probability measure on $\R_+$ with a positive smooth density (still denoted by $\nu$), and $g$ be a positive smooth function on $\R_+$ such that $\nu(g) = 1$. We define $\nu_g$, the perturbation of $\nu$ by $g$, by $\nu_g (f) = \nu(fg)$ for all bounded $f$. Let $m$ be the median of $\nu$, defined by $\nu\po [0,m]\pf = \frac12$.

The aim of this section is to prove the following:


\begin{lem}\label{lemPerturbMonotone}
Suppose $g$ is non-increasing and $g(0) := \underset{x\rightarrow 0}\lim g(x) \neq \infty$.
\begin{enumerate}
\item If $\nu$ satisfies the Poincar\'e inequality $\mathcal B(2,c_1)$, then $\nu_g$ satisfies $\mathcal B(2,c_2)$ with
\[c_2 = 8\frac{g(0)}{g(m)}c_1.\]
\item If $\ln g$ is $\kappa$-Lipschitz and $\nu$ satisfies the log-Sobolev inequality $\mathcal B(1,c_1)$ then $\nu_g$ satisfies $\mathcal B(1,c_2)$ with for all $\varepsilon \in(0,1)$
\[ c_2 \leq  \po\frac{2}{1-\varepsilon} + 8\frac{g(0)}{g(m)}\po 2+  \frac{\frac{c_1\kappa^2}2 + \varepsilon \ln \nu\po g^{1-\frac{1}{\varepsilon}}\pf}{1-\varepsilon}\pf\pf c_1.\]
\end{enumerate}
\end{lem}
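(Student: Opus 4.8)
The plan is to reduce both statements to the one–dimensional Hardy (Muckenhoupt) theory, and, for the logarithmic Sobolev case, to the classical ``defective $\Rightarrow$ tight'' mechanism.

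\textbf{Poincar\'e inequality.} Write $m$ for the median of $\nu$ and $p$ for its density. For any real constant, in particular $f(m)$, one has $\text{Var}_{\nu_g}(f)\le \int_0^\infty(f-f(m))^2\,d\nu_g$, and I would split this integral as $\int_0^m+\int_m^\infty$ and bound each half by the one–dimensional Hardy inequality on the corresponding interval for a function pinned at $m$ (see \cite{Logsob}), recalling that its constant is at most $4$ times the associated Muckenhoupt functional. The key observation is that the Muckenhoupt functionals of $\nu_g$ relative to $m$ are controlled by those of $\nu$: since $g$ is non-increasing with $g(0)=\sup g<\infty$, on $(0,m]$ the sandwich $g(m)\le g\le g(0)$ costs only a factor $g(0)/g(m)$, whereas on $(m,\infty)$ monotonicity alone suffices, as for every $x>m$
\[
\Big(\int_x^{\infty}g\,d\nu\Big)\Big(\int_m^x\frac{dt}{g(t)p(t)}\Big)\ \le\ \big(g(x)\,\nu([x,\infty))\big)\cdot\frac1{g(x)}\int_m^x\frac{dt}{p(t)}\ =\ \nu([x,\infty))\int_m^x\frac{dt}{p(t)}
\]
with no loss at all. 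Since $\nu$ satisfies $\mathcal B(2,c_1)$, both its Muckenhoupt functionals relative to $m$ are at most $2c_1$; collecting the constants ($4$ from Hardy, $2c_1$ from Poincar\'e, and $g(0)/g(m)\ge1$ to treat the two halves uniformly) gives $c_2=8\,\frac{g(0)}{g(m)}\,c_1$.

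\textbf{A defective log–Sobolev inequality for $\nu_g$.} Now assume $\nu$ satisfies $\mathcal B(1,c_1)$, hence also $\mathcal B(2,c_1)$, so the previous step applies with the same $c_1$ and gives $\nu_g$ a Poincar\'e inequality with constant $C_P=8\,g(0)/g(m)\,c_1$. Put $F=|f|\sqrt g$ (after the routine smoothing of $|f|$); a one–line computation gives $\text{Ent}_{\nu_g}(f^2)=\text{Ent}_\nu(F^2)-\int F^2\ln g\,d\nu$. For the last term I would invoke the Gibbs (Donsker–Varadhan) variational principle: for any $\theta>0$, $-\int F^2\ln g\,d\nu\le\frac1\theta\text{Ent}_\nu(F^2)+\frac1\theta\nu(F^2)\ln\nu(g^{-\theta})$; choosing $\theta=\frac1\varepsilon-1$ turns this into
\[
\text{Ent}_{\nu_g}(f^2)\ \le\ \frac1{1-\varepsilon}\,\text{Ent}_\nu(F^2)\ +\ \frac{\varepsilon}{1-\varepsilon}\,\nu_g(f^2)\,\ln\nu\big(g^{\,1-1/\varepsilon}\big).
\]
Then $\nu(F^2)=\nu_g(f^2)$, and since $\ln g$ is $\kappa$–Lipschitz, $(F')^2=g\big(f'+\tfrac f2(\ln g)'\big)^2\le 2g(f')^2+\tfrac{\kappa^2}{2}gf^2$ by Young's inequality with constant one; feeding $\text{Ent}_\nu(F^2)\le c_1\nu((F')^2)$ into the display yields the defective inequality $\text{Ent}_{\nu_g}(f^2)\le A\,\nu_g((f')^2)+B\,\nu_g(f^2)$ with $A=\frac{2c_1}{1-\varepsilon}$ and $B=\frac1{1-\varepsilon}\big(\frac{c_1\kappa^2}{2}+\varepsilon\ln\nu(g^{1-1/\varepsilon})\big)$. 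Here $\nu(g^{1-1/\varepsilon})<\infty$ because the Lipschitz bound forces $g(x)\ge e^{-\kappa x}$, whereas $\nu$, satisfying a log–Sobolev inequality, has Gaussian tails.

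\textbf{Tightening, and the main difficulty.} It then remains to apply the defective inequality to $f-\nu_g f$, Rothaus' lemma $\text{Ent}_{\nu_g}(f^2)\le\text{Ent}_{\nu_g}((f-\nu_g f)^2)+2\,\text{Var}_{\nu_g}(f)$, and the Poincar\'e inequality with constant $C_P$; this converts $(A,B)$ into the tight log–Sobolev constant $A+(B+2)C_P$, which is exactly the announced value of $c_2$. I expect the only genuinely delicate point to be the right–tail estimate of the Poincar\'e step: there $g$ is not bounded below, so the Holley–Stroock bounded–perturbation argument is useless and one must exploit the monotonicity of $g$ inside the Muckenhoupt functional precisely as in the displayed estimate; the remaining work is bookkeeping of constants, where using Young's inequality with the sharp constant $1$ is what produces the clean $\kappa^2/2$ matching the stated bound.
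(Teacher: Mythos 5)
Your proposal is correct, arrives at exactly the stated constants, and follows the paper's overall route: Muckenhoupt with the split at the median $m$ of $\nu$ (exploiting the monotonicity of $g$ so the right tail costs nothing and the left half only a factor $g(0)/g(m)$), then a defective log-Sobolev inequality for $\nu_g$ from applying $\mathcal B(1,c_1)$ to $F=f\sqrt g$, then tightening via Rothaus/Deuschel--Stroock and the Poincar\'e constant of point 1. The one place you genuinely diverge is the control of the cross term $-\int F^2\ln g\,d\nu$. You use the Donsker--Varadhan variational formula
$-\int F^2\ln g\,d\nu\le\frac1\theta\,\text{Ent}_\nu(F^2)+\frac{\nu(F^2)}\theta\ln\nu(g^{-\theta})$ with $\theta=\frac1\varepsilon-1$, which produces $\ln\nu(g^{1-1/\varepsilon})$ directly. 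The paper instead applies the pointwise Young inequality $st\le s\ln s-s+e^t$ (with $s=\varepsilon f^2$, $t=-\varepsilon^{-1}\ln(g/g(0))$), which leaves an additive constant $\nu_g\bigl((g(0)/g)^{1/\varepsilon}\bigr)$ not multiplied by $\nu_g(f^2)$; this is then converted into a multiplicative coefficient by Gross's scaling lemma, and the logarithm only appears at that stage. The two routes give the identical $\gamma=B=\frac{\frac{c_1\kappa^2}2+\varepsilon\ln\nu(g^{1-1/\varepsilon})}{1-\varepsilon}$; your Donsker--Varadhan version is a bit more direct (one step rather than two, and the origin of the logarithm is transparent), while the paper's Young-plus-Gross route is more self-contained at the price of an extra normalization step. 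Your finiteness observation for $\nu(g^{1-1/\varepsilon})$ (Lipschitz lower bound on $g$ against Gaussian tails from the log-Sobolev inequality) matches the remark following the lemma.
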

\textbf{Remark:} actually as far as point 2 is concerned the monotonicity of $g$ is only needed to get the explicit estimate of $c_2$: as soon as $\nu$ satisfies a log-Sobolev inequality and $\ln g$ is Lipschitz,  $\nu_g$ satisfies a log-Sobolev inequality (see \cite{Aida}).

Moreover when $\nu$ satisfies a log-Sobolev inequality and $\ln g$ is Lipschitz, $\nu \po g^{\alpha}\pf$ is finite for all $\alpha\in\R$ (see \cite{Aida2}), so that $c_2$ is finite.

\begin{proof}[proof of point 1]
According to Muckenhoupt work (see \cite[Theorem 6.2.2 p. 99 and Remark 6.2.3]{Logsob}), a probability with density $h>0$ satisfies $\mathcal B(2,c)$ iff $B_{m_h}(h)$ is finite when $m_h$ is the median of $h(t)dt$ and
\[B_\alpha(h) = \max\po \underset{x\in(\alpha,\infty)}{\sup} \po \int_x^\infty h(t)dt \int_\alpha^x \frac{1}{h(t)}dt\pf, \underset{x\in(0,\alpha)}{\sup} \po \int_0^x h(t)dt \int_x^\alpha \frac{1}{h(t)}dt\pf\pf .\]
Furthermore, in that case, the optimal $c$ (namely the smallest $c$ such that $\mathcal B(2,c)$ holds)  is such that 
\[\frac12\underset{\alpha >0}{\inf}B_\alpha(h) \leq \frac12 B_{m_h}(h)\leq c \leq 4\underset{\alpha >0}{\inf}B_\alpha(h)  \leq 4 B_{m_h}(h).\]
In the present case, for all $x\geq m$,
 \begin{eqnarray*}
   \int_x^\infty g(t) \nu(t)dt \int_{m}^x \frac{1}{g(t)\nu(t)}dt & \leq & \int_x^\infty g(x) \nu(t)dt \int_{m}^x \frac{1}{g(x)\nu(t)}dt\\
   &\leq & 2c_1.
 \end{eqnarray*}
 and  for all $x\leq m$
 \begin{eqnarray*}
  \int_0^x g(t) \nu(t)dt   \int_x^m  \frac{1}{g(t)\nu(t)}dt & \leq & \int_0^x g(0) \nu(t)dt   \int_x^m  \frac{1}{g(m)\nu(t)}dt \\
  & \leq & 2\frac{g(0)}{g(m)}c_1.
 \end{eqnarray*}
 Hence $\nu_g$ satisfies $\mathcal B(2,c_2)$ with
 \[c_2 \leq 4\underset{\alpha >0}{\inf}B_\alpha(\nu g) \leq 4 B_m(\nu g) \leq 8\frac{g(0)}{g(m)}c_1.\]
\end{proof}

\begin{proof}[proof of point 2]
Following a computation of Aida and Shigekawa (\cite{Aida}), we apply the inequality $\mathcal B(1,c_1)$, namely
\begin{eqnarray*}
\forall f\in\A,\hspace{20pt}\nu\po f^2\ln f^2 \pf &  \leq & c_1 \nu (f')^2 + \po \nu f^2\pf \ln \po \nu f^2\pf,
\end{eqnarray*}
to the function $f\sqrt g$, which reads
\begin{eqnarray}\label{EqavantYoung}
\forall f\in\A,\hspace{20pt}\nu_g\po f^2\ln f^2 \pf + \nu_g\po f^2 \ln g \pf & \leq &  c_1 \nu_g \po f' + \frac{g'}{2g}f \pf^2 + \po \nu_g f^2\pf \ln \po \nu_g f^2\pf.
\end{eqnarray}
From the inequality $(a+b)^2 \leq 2a^2 + 2 b^2$ and the assumption on $\ln g$,
\begin{eqnarray*}
\nu_g \po f' + \frac{g'}{2g} f\pf^2 & \leq & 2 \nu_g (f')^2 + \frac{\kappa^2}2  \nu_g (f^2).
\end{eqnarray*}
On the other hand, from the Young inequality $st \leq s\ln s - s + e^t$ applied with $s= \varepsilon f^2$ and $t = -\varepsilon^{-1}\ln \po \frac{ g}{g(0)}\pf $ for any $\varepsilon>0$,
\begin{eqnarray*}
- \nu_g\po f^2 \ln g \pf & = & - \nu_g\po f^2 \ln \po \frac{ g}{g(0)}\pf \pf - \ln g(0) \nu_g \po f^2\pf \\
 & \leq & \varepsilon \nu_g\po f^2 \ln f^2\pf - \po\varepsilon(1-\ln \varepsilon) + \ln g(0)\pf \nu_g\po f^2\pf + \nu_g \po \po\frac{g(0)}{g}\pf^\frac{1}{\varepsilon}\pf.
\end{eqnarray*}
Thus Inequality \eqref{EqavantYoung} yields
\begin{eqnarray*}
(1-\varepsilon)\nu_g\po f^2\ln f^2 \pf  & \leq & 2 c_1 \nu_g (f')^2 + \po \frac{c_1\kappa^2}2  - \varepsilon(1-\ln \varepsilon) - \ln g(0) \pf \nu_g \po f^2\pf + \nu_g \po \po\frac{g(0)}{g}\pf^\frac{1}{\varepsilon}\pf\\
& & + \nu_g(f^2)\ln \nu_g(f^2).
\end{eqnarray*}
Thanks to Gross' Lemma (2.2 of \cite{Gross}), this implies (for $\varepsilon <1$)
\begin{eqnarray}\label{EqapresYoung}
\nu_g\po f^2\ln f^2 \pf  - \nu_g(f^2)\ln \nu_g(f^2) & \leq & \frac{2 c_1}{1-\varepsilon} \nu_g (f')^2 + \gamma \nu_g \po f^2\pf 
\end{eqnarray}
with
\begin{eqnarray*}
\gamma & = &  \frac{\frac{c_1\kappa^2}2  - \varepsilon(1-\ln \varepsilon) - \ln g(0)}{1-\varepsilon} + \frac{\varepsilon}{1-\varepsilon}\po1 + \ln\po \frac{\nu_g\po \po\frac{g(0)}{g}\pf^\frac{1}{\varepsilon}\pf}{\varepsilon}\pf\pf\\
& = & \frac{\frac{c_1\kappa^2}2 + \varepsilon \ln \nu_g\po g^{-\frac{1}{\varepsilon}}\pf}{1-\varepsilon}.
\end{eqnarray*}

It is classical to retrieve a log-Sobolev inequality from Inequality \eqref{EqapresYoung} and a Poincar\'e inequality, thanks to the following inequality (see \cite{Deuschel}, p.146): if $h = f - \nu_g f$,
\[\nu_g\po f^2\ln f^2 \pf  - \nu_g(f^2)\ln \nu_g(f^2)  \leq \nu_g\po h^2\ln h^2 \pf  - \nu_g(h^2)\ln \nu_g(h^2) + 2 \nu_g \po h^2\pf. \]
Together with Inequality \eqref{EqapresYoung} applied to $h$, and since $h' = f'$,
\begin{eqnarray*}
\nu_g\po f^2\ln f^2 \pf  - \nu_g(f^2)\ln \nu_g(f^2) & \leq & \frac{2c_1}{1-\varepsilon} \nu_g (f')^2 +  (\gamma +2) \nu_g \po (f-\nu_g f)^2\pf
\end{eqnarray*}
Since $\nu$ satisfies $\mathcal B(1,c_1)$ it also satisfies $\mathcal B(2,c_1)$. Thus, according to point 1 of Lemma \ref{lemPerturbMonotone}, $\nu_g$ satisfies $\mathcal B\po 2,8\frac{g(0)}{g(m)}c_1\pf$, which means
\begin{eqnarray*}
\nu_g\po f^2\ln f^2 \pf  - \nu_g(f^2)\ln \nu_g(f^2) & \leq & \po\frac{2}{1-\varepsilon} + 8\frac{g(0)}{g(m)}(2+\gamma)\pf c_1 \nu_g (f')^2.
\end{eqnarray*}
\end{proof}

\bibliographystyle{plain}
\bibliography{biblio}

\begin{thebibliography}{10}

\bibitem{Aida2}
S.~Aida, T.~Masuda, and I.~Shigekawa.
\newblock Logarithmic {S}obolev inequalities and exponential integrability.
\newblock {\em J. Funct. Anal.}, 126(1):83--101, 1994.

\bibitem{Aida}
S.~{Aida} and I.~{Shigekawa}.
\newblock Logarithmic sobolev inequalities and spectral gaps: Perturbation
  theory.
\newblock {\em Journal of Functional Analysis}, 1994.

\bibitem{Ambrosio}
L.~Ambrosio, N.~Gigli, and G.~Savar{\'e}.
\newblock Bakry-\'{E}mery curvature-dimension condition and {R}iemannian
  {R}icci curvature bounds.
\newblock {\em Ann. Probab.}, 43(1):339--404, 2015.

\bibitem{Logsob}
C.~An\'e, S.~Blach\`ere, D.~{Chafa\"i}, P~Foug\`eres, I.~Gentil, F.~Malrieu,
  C.~Roberto, and G.~Scheffer.
\newblock {\em Sur les in\'egalit\'es de Sobolev logarithmiques}.
\newblock Panoramas et synth\`eses. Soci\'et\'e math\'ematique de France,
  Paris, 2000.

\bibitem{Krell}
Romain Aza{\"{\i}}s, Jean-Baptiste Bardet, Alexandre G{\'e}nadot, Nathalie
  Krell, and Pierre-Andr{\'e} Zitt.
\newblock Piecewise deterministic {M}arkov process---recent results.
\newblock 44:276--290, 2014.

\bibitem{Cattiaux2008}
D.~Bakry, P.~Cattiaux, and A.~Guillin.
\newblock Rate of convergence for ergodic continuous {M}arkov processes:
  {L}yapunov versus {P}oincar\'e.
\newblock {\em J. Funct. Anal.}, 254(3):727--759, 2008.

\bibitem{Malrieu2011}
J.-B. Bardet, A.~Christen, A.~Guillin, F.~Malrieu, and P.-A. Zitt.
\newblock Total variation estimates for the {TCP} process.
\newblock {\em Electron. J. Probab.}, 18:no. 10, 21, 2013.

\bibitem{Barthe}
F.~Barthe.
\newblock Levels of concentration between exponential and {G}aussian.
\newblock {\em Ann. Fac. Sci. Toulouse Math. (6)}, 10(3):393--404, 2001.

\bibitem{Baudoin}
F.~{Baudoin}.
\newblock {Bakry-Emery meet Villani}.
\newblock {\em ArXiv e-prints}, August 2013.

\bibitem{BLBMZ2}
M.~{Bena{\"i}m}, S.~{Le Borgne}, F.~{Malrieu}, and P.-A. {Zitt}.
\newblock Quantitative ergodicity for some switched dynamical systems.
\newblock {\em Electron. Commun. Probab.}, 17:no. 56, 14, 2012.

\bibitem{BolleyGentil}
F.~Bolley and I.~Gentil.
\newblock Phi-entropy inequalities for diffusion semigroups.
\newblock {\em J. Math. Pures Appl. (9)}, 93(5):449--473, 2010.

\bibitem{Joulin2013}
M.~{Bonnefont} and A.~{Joulin}.
\newblock {Intertwining relations for one-dimensional diffusions and
  application to functional inequalities}.
\newblock {\em Potential Analysis}, April 2014.

\bibitem{Bouguet}
F.~{Bouguet}.
\newblock {Quantitative speeds of convergence for exposure to food
  contaminants}.
\newblock {\em ArXiv e-prints}, October 2013.

\bibitem{Mischler2010}
M.~J. C{\'a}ceres, J.~A. Ca{\~n}izo, and S.~Mischler.
\newblock Rate of convergence to an asymptotic profile for the self-similar
  fragmentation and growth-fragmentation equations.
\newblock {\em J. Math. Pures Appl. (9)}, 96(4):334--362, 2011.

\bibitem{CattiauxGuillinPAZ}
P.~Cattiaux, A.~Guillin, and P.-A. Zitt.
\newblock Poincar\'e inequalities and hitting times.
\newblock {\em Ann. Inst. Henri Poincar\'e Probab. Stat.}, 49(1):95--118, 2013.

\bibitem{Chafai2004}
D.~Chafa{\"{\i}}.
\newblock Entropies, convexity, and functional inequalities: on
  {$\Phi$}-entropies and {$\Phi$}-{S}obolev inequalities.
\newblock {\em J. Math. Kyoto Univ.}, 44(2):325--363, 2004.

\bibitem{ChafaiMalrieuParoux}
D.~Chafa{\"{\i}}, F.~Malrieu, and K.~Paroux.
\newblock On the long time behavior of the {TCP} window size process.
\newblock {\em Stochastic Process. Appl.}, 120(8):1518--1534, 2010.

\bibitem{Chen}
Mu-Fa Chen.
\newblock {\em From {M}arkov chains to non-equilibrium particle systems}.
\newblock World Scientific Publishing Co., Inc., River Edge, NJ, second
  edition, 2004.

\bibitem{Cloez2012}
B.~{Cloez}.
\newblock {Wasserstein decay of one dimensional jump-diffusions}.
\newblock {\em ArXiv e-prints}, February 2012.

\bibitem{Costa}
O.~L.~V. {Costa} and F.~{Dufour}.
\newblock Stability and ergodicity of piecewise deterministic markov processes.
\newblock {\em SIAM Journal on Control and Optimization}, 47:1053--1077, 2008.

\bibitem{Davis}
M.H.A {Davis}.
\newblock {\em Markov Models and Optimization}.
\newblock Monographs on Statistics and Applied Probability. Chapman and Hall,
  1993.

\bibitem{Deuschel}
J.-D. Deuschel and D.~W. Stroock.
\newblock {\em Large deviations}, volume 137 of {\em Pure and Applied
  Mathematics}.
\newblock Academic Press, Inc., Boston, MA, 1989.

\bibitem{DMS2011}
J.~Dolbeault, C.~Mouhot, and C.~Schmeiser.
\newblock Hypocoercivity for linear kinetic equations conserving mass.
\newblock 2015.

\bibitem{MeynTweedieDown}
D.~Down, S.~P. Meyn, and R.~L. Tweedie.
\newblock Exponential and uniform ergodicity of {M}arkov processes.
\newblock {\em Ann. Probab.}, 23(4):1671--1691, 1995.

\bibitem{Dumas}
V.~Dumas, F.~Guillemin, and P.~Robert.
\newblock A {M}arkovian analysis of additive-increase multiplicative-decrease
  algorithms.
\newblock {\em Adv. in Appl. Probab.}, 34(1):85--111, 2002.

\bibitem{Fontbona2010}
J.~Fontbona, H.~Gu{\'e}rin, and F.~Malrieu.
\newblock Quantitative estimates for the long-time behavior of an ergodic
  variant of the telegraph process.
\newblock {\em Adv. in Appl. Probab.}, 44(4):977--994, 2012.

\bibitem{GabrielSalvarani}
P.~Gabriel and F.~Salvarani.
\newblock Exponential relaxation to self-similarity for the superquadratic
  fragmentation equation.
\newblock {\em Appl. Math. Lett.}, 27:74--78, 2014.

\bibitem{Gross}
L.~Gross.
\newblock Logarithmic {S}obolev inequalities on loop groups.
\newblock {\em J. Funct. Anal.}, 102(2):268--313, 1991.

\bibitem{Kulik}
A.~M. Kulik.
\newblock Poincar\'e inequality and exponential integrability of the hitting
  times of a {M}arkov process.
\newblock {\em Theory Stoch. Process.}, 17(2):71--80, 2011.

\bibitem{Kuwada}
K.~Kuwada.
\newblock Duality on gradient estimates and {W}asserstein controls.
\newblock {\em J. Funct. Anal.}, 258(11):3758--3774, 2010.

\bibitem{Last}
G.~Last.
\newblock Ergodicity properties of stress release, repairable system and
  workload models.
\newblock {\em Adv. in Appl. Probab.}, 36(2):471--498, 2004.

\bibitem{Latala}
R.~Lata{\l}a and K.~Oleszkiewicz.
\newblock Between {S}obolev and {P}oincar\'e.
\newblock In {\em Geometric aspects of functional analysis}, volume 1745 of
  {\em Lecture Notes in Math.}, pages 147--168. Springer, Berlin, 2000.

\bibitem{LaurencotPerthame}
P.~Lauren{\c{c}}ot and B.~Perthame.
\newblock Exponential decay for the growth-fragmentation/cell-division
  equation.
\newblock {\em Commun. Math. Sci.}, 7(2):503--510, 2009.

\bibitem{LopkerVL}
A.~H. L{\"o}pker and J.~S.~H. van Leeuwaarden.
\newblock Transient moments of the {TCP} window size process.
\newblock {\em J. Appl. Probab.}, 45(1):163--175, 2008.

\bibitem{MalrieuPDMP}
F.~{Malrieu}.
\newblock {Some simple but challenging Markov processes}.
\newblock {\em ArXiv e-prints, to appear in Annales de la Faculté de sciences
  de Toulouse}, December 2014.

\bibitem{MalrieuTalay}
F.~{Malrieu} and D.~{Talay}.
\newblock Concentration inequalities for euler schemes.
\newblock {\em Monte Carlo and Quasi-Monte Carlo Methods}.

\bibitem{Villani2009}
C.~Villani.
\newblock Hypocoercivity.
\newblock {\em Mem. Amer. Math. Soc.}, 202(950):iv+141, 2009.

\bibitem{VillaniOldNew}
C.~Villani.
\newblock {\em Optimal transport}, volume 338 of {\em Grundlehren der
  Mathematischen Wissenschaften [Fundamental Principles of Mathematical
  Sciences]}.
\newblock Springer-Verlag, Berlin, 2009.
\newblock Old and new.

\end{thebibliography}
\end{document}